\newtheorem{theorem}{Theorem}[section]
\newtheorem{lemma}[theorem]{Lemma}
\newtheorem{proposition}[theorem]{Proposition}
\newtheorem{corollary}[theorem]{Corollary}
\newtheorem{problem}{Problem}
\newtheorem{obs}[theorem]{Observation}
\newtheorem{definition}[theorem]{Definition}%
\newenvironment{customthm}[1]
  {\innercustomthm}
  {\endinnercustomthm}
\def\moverlay{\mathpalette\mov@rlay}
\def\mov@rlay#1#2{\leavevmode\vtop{%
   \baselineskip\z@skip \lineskiplimit-\maxdimen
   \ialign{\hfil$\m@th#1##$\hfil\cr#2\crcr}}}
\newcommand{\charfusion}[3][\mathord]{
    #1{\ifx#1\mathop\vphantom{#2}\fi
        \mathpalette\mov@rlay{#2\cr#3}
      }
    \ifx#1\mathop\expandafter\displaylimits\fi}
\newcommand{\cupdot}{\charfusion[\mathbin]{\cup}{\cdot}}
\definecolor{MHcol}{RGB}{0, 100, 250}
\newcommand{\mh}[1]{\begingroup\color{MHcol}#1\endgroup}
\DeclareMathOperator{\med}{med}
\newcommand{\mc}{{\textrm{medico}}\xspace}
\newcommand{\MC}{{\textrm{Medico}}\xspace}
\providecommand{\keywords}[1]{\textbf{\textit{Keywords: }} #1}
  \title{On a generalization of median graphs: $k$-median graphs}
\author[ ]{Marc Hellmuth*} 
\author[ ]{Sandhya Thekkumpadan Puthiyaveedu}
\affil[ ]{Department of Mathematics, Faculty of Science,
	Stockholm University, SE - 106 91 Stockholm,   Sweden \smallskip 
	}
\affil[*]{corresponding author 
			(\textnormal{\texttt{marc.hellmuth@math.su.se}})}
\date{\ }
\begin{document}

\maketitle

\abstract{Median graphs are connected graphs in which for all three vertices
          there is a unique vertex that belongs to shortest paths between each
          pair of these three vertices. To be more formal, a graph $G$ is
          a median graph if, for all $\mu, u,v\in V(G)$, it holds that
          $|I(\mu,u)\cap I(\mu,v)\cap I(u,v)|=1$ where $I(x,y)$ denotes the set
          of all vertices that lie on shortest paths connecting $x$ and $y$. In
          this paper we are interested in a natural generalization of median graphs,
          called $k$-median graphs. A graph $G$ is a $k$-median graph, if
          there are $k$ vertices $\mu_1,\dots,\mu_k\in V(G)$ such that, for all
          $u,v\in V(G)$, it holds that $|I(\mu_i,u)\cap I(\mu_i,v)\cap I(u,v)|=1$,
          $1\leq i\leq k$. By definition, every median graph with $n$ vertices
          is an $n$-median graph. We provide several characterizations of
          $k$-median graphs that, in turn, are used to provide many novel characterizations
          of median graphs.}
\smallskip

\noindent 
\keywords{median graph, convexity, meshed and quadrangle property, modular, interval
}

\sloppy

\section{Introduction}

A median graph is a connected graph, in which, for each triple of vertices
$x,y,z$ there exists a unique vertex $\med(x,y,z)$, called the median,
simultaneously lying on shortest paths between each pair of the triple
\cite{Birkhoff:47}. Denoting with $I(u,v)$ the set of all vertices that lie on
shortest paths connecting $u$ and $v$ and putting $I(x,y,z)\coloneqq I(x,y)\cap
I(x,z)\cap I(y,z)$, a graph is a median graph precisely if $|I(x,y,z) | = 1$ for
all of its vertices $x,y,z$ \cite{mulder1980interval,mulder1998metric}. Median
graphs have been studied at least since the 1940's
\cite{Birkhoff:47,Avann:61,Nebesky:71} and naturally arise in several fields of
mathematics, for example, in algebra \cite{bandelt1983median}, metric graph
theory \cite{Bandelt:08}, geometry \cite{chepoi2000graphs} or the study of 
split systems \cite{hellmuth2022injective,Dress1997,buneman71recovery}. Moreover, they
have practical applications in areas such as social choice theory
\cite{Bandelt:84,Day:03}, phylogenetics
\cite{Dress:97,BSH:22,hellmuth2022injective}, and forensic science
\cite{parson2007empop}.
 
In this paper, we study a generalization of median graphs, that is, $k$-median
graphs. A vertex $\mu$ in a graph $G$ is called median-consistent (\mc), if
$|I(\mu,x,y) | = 1$ for all $x,y\in V(G)$. A graph $G$ is a $k$-median graph if
it contains $k$ \mc vertices. In other words, $G$ is a $k$-median graph if the
set $W$ obtained from $V(G)$ by removal of all vertices $x$ for which $|I(x,y,z)
| \neq 1$ satisfies $|W|\geq k$. The existence of $\mc$ vertices has been
	studied in a work of Bandelt et al.\ \cite{BVV:93} in the context of modular
graphs, that is, graphs that satisfy $I(x,y,z) \neq \emptyset$ for all $x,y,z\in
V(G)$. In \cite{BVV:93}, \mc vertices were called ``neutral'' and the authors
provided a simple characterization of \mc vertices in modular graphs $G$ in
terms of intervals $I(x,y)$ and $K_{2,3}$ subgraphs (cf.\ \cite[Prop.\ 5.5]{BVV:93})

One may think of the integer $k$ in $k$-median graphs as a measure on how
``close'' a given graph is to a median graph as every median graph $G$ is a
$|V(G)|$-median graph. Studying $k$-median graphs is also motivated by the
following observations. In \cite{BSH:22}, the authors were interested in
``representing'' edge-colored graphs by rooted median graphs. To be more
precise, given an edge-colored graph $H$ the task is to find a vertex-colored
median graph $G$ with a distinguished vertex $\mu$ (called root in
\cite{BSH:22}) such that the color of the unique median $\med(\mu,x,y)$
corresponds to the color of the (non)edges $\{x,y\}$ in $H$ for all $x,y\in
V(H)$. The latter, in particular, generalizes the concept of so-called cographs
\cite{Corneil:81}, symbolic ultrametrics \cite{Boecker:98,HW:16a,Hellmuth:13a},
or unp 2-structures \cite{EHPR:96,HSW:16}, that is, combinatorial objects that
are represented by rooted vertex-colored trees. The idea of using median graphs
instead of trees can be generalized even more by asking for an arbitrary
vertex-colored graph $G$ that contains a vertex $\mu$ for which the median
$\med(\mu,x,y)$ is well-defined for all $x,y\in V(G)$ and the color of
$\med(\mu,x,y)$ distinguishes between the colors of the underlying (non)edges
$\{x,y\}$ in the edge-colored graph $H$. In this case, it is only required that
$G$ is a 1-median graph and thus, contains (at least) one \mc vertex. 

In this paper, we study the structural properties of $k$-median graphs which, in
turn, lead also to novel characterizations of median graphs. By way of example,
we show that median graphs are precisely those graphs $G$ for which $I(x,y,z)
\neq \emptyset$ and the subgraph induced by the vertices in $I(x,y,z)$ is
connected for all $x,y,z\in V(G)$ (cf.\ Thm.\ \ref{thm:med-novel}). This paper
is organized as follows. In Section \ref{sec:prelim}, we introduce necessary
notation and definitions. We then summarize our main results in Section
\ref{sec:basic-k-median}. In Section \ref{sec:conditions}, we establish basic
structural properties and provide necessary conditions for $k$-median graphs. In
Section \ref{sec:convex} we provide characterizations of $k$-median that, in
turn, result in plenty of new characterizations of median graphs. We complement
these results by a simple program to verify if a given graph is a $k$-median
graph and to compute the largest such integer $k$ in the affirmative case. This
straightforward algorithm is written in Python, hosted at GitHub
\cite{github-MH}) and can be used to verify the examples.

\section{Preliminaries}
\label{sec:prelim}

For a set $A$, we write $A^n \coloneqq \times_{i=1}^n A$ for the $n$-fold
Cartesian set product of $A$. All graphs $G=(V,E)$ considered here are
undirected, simple and finite and have vertex set $V(G)\coloneqq V$ and edge set
$E(G)\coloneqq E$. We put $|G| \coloneqq |V|$ and $\| G \|\coloneqq |E|$. We
write $G[W]$ for the graph that is induced by the vertices in $W\subseteq V$. A
graph $G$ is \emph{$H$-free} if $G$ does not contain an induced subgraph
isomorphic to $H$. For two graphs $G$ and $H$, their \emph{intersection $G\cap H$} is
defined as the graph $(V(G)\cap V(H), E(G)\cap E(H))$. A \emph{complete graph}
$K_{|V|}=(V,E)$ is a graph that satisfies $\{u,v\}\in E$ for all distinct
$u,v\in V$. A graph $G=(V,E)$ is bipartite if its vertex set $V$ can be
partitioned into two non-empty sets $V_1$ and $V_2$ such that $\{x,y\}\in E$
implies $x\in V_i$ and $y\in V_j$, $i\neq j$. A bipartite graph $G=(V,E)$ with
bipartition $V_1\cupdot V_2=V$ is \emph{complete} and denoted by
$K_{|V_1|,|V_2|}$, if $x\in V_i$ and $y\in V_j$, $i\neq j$ implies $\{x,y\}\in
E$. A \emph{cycle $C$} is a connected graph in which all vertices have degree
$2$. A cycle of length $n=|C|$ is denoted by $C_n$. A \emph{hypercube $Q_n$} has
vertex set $\{0,1\}^n$ and vertices are adjacent if their coordinates differ in
precisely one position. We write $Q_n^-$ for the hypercube $Q_n$  
from which one vertex and its incident edges has been deleted.  

All paths in $G=(V,E)$ are considered to be simple, that is, no vertex is traversed
twice. The distance $d_G(u,v)$ between vertices $u$ and $v$ in a graph $G$ is
the length $\|P\|$ of a shortest path $P$ connecting $u$ and $v$. Note that the
distance for all vertices is well-defined whenever $G$ is connected and that
$d_G(x,y)= d_G(y,x)$. For a subset $W\subseteq V$ we say that $w\in W$ is 
(among the vertices in $W$) \emph{closest}
to $v\in V$ if $d_G(v,w)=\min_{x\in W} d_G(v,x)$. 
We call paths connecting $u$ and $v$ also $(u,v)$-paths. A
path 
on $n=|P|$ vertices is denoted by $P_n$. A subgraph $H$ of $G$ is
\emph{isometric} if $d_H (u, v) = d_G (u, v)$ for all $u, v \in V (H)$. A
subgraph $H$ of $G$ is \emph{convex} if for any $u, v \in V(H)$, all shortest
$(u, v)$-paths belong to $H$. The \emph{convex hull} of a subgraph $H$ in
$G$ is the least convex subgraph of $G$ that contains $H$. 
If the context is clear, we may write
$d(\cdot,\cdot)$ instead of $d_G(\cdot,\cdot)$ for a given graph $G$.

For later reference, we provide the following 
\begin{lemma}\label{lem:01-diff-adge}
	For every edge $\{u,v\}$ in a connected graph $G$ and every vertex $x\in
	V(G)$ it holds that $0\leq |d_G(x,u)-d_G(x,v)|\leq 1$. Moreover, $G$ is
	bipartite if and only if $d_G(x,u) \neq d_G(x,v)$ for all $x\in V(G)$ and
	$\{u,v\}\in E(G)$. In particular, if there is a vertex $x$ such that
	 $d_G(x,u) \neq d_G(x,v)$ for all	$\{u,v\}\in E(G)$, then $G$ is bipartite. 
\end{lemma}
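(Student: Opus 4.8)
The plan is to treat the three assertions in turn, reducing everything to the triangle inequality together with a parity argument. For the first inequality, I would note that $\{u,v\}\in E(G)$ means $d_G(u,v)=1$, so the triangle inequality gives both $d_G(x,u)\le d_G(x,v)+1$ and $d_G(x,v)\le d_G(x,u)+1$; rearranging these yields $|d_G(x,u)-d_G(x,v)|\le 1$, while the lower bound is immediate since the quantity is an absolute value. A useful reformulation to carry forward is that, by this very inequality, the condition $d_G(x,u)\ne d_G(x,v)$ is equivalent to $|d_G(x,u)-d_G(x,v)|=1$, that is, to $d_G(x,u)$ and $d_G(x,v)$ having opposite parity.

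For the forward direction of the characterization, suppose $G$ is bipartite with bipartition $V_1\cupdot V_2 = V(G)$. The key observation is that, since every edge joins the two sides, every walk (in particular every shortest path) between two vertices has length whose parity is even if the endpoints lie on the same side and odd otherwise. Hence, for a fixed $x$ and an edge $\{u,v\}$, exactly one of $u,v$ lies on the same side as $x$, so $d_G(x,u)$ and $d_G(x,v)$ have opposite parity and are therefore distinct.

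For the backward direction, and simultaneously for the final ``in particular'' statement, I would fix a single vertex $x$ for which $d_G(x,u)\ne d_G(x,v)$ holds for all edges $\{u,v\}$ (for the ``only if'' part every $x$ qualifies, so one may pick any). I then define a $2$-colouring of $V(G)$ by assigning to each $w$ the parity of $d_G(x,w)$. By the reformulation above, every edge receives endpoints of opposite parity, so this is a proper $2$-colouring and $G$ is bipartite, the two colour classes being non-empty as soon as $G$ has an edge (with $x$ itself lying in one of them). This single colouring settles both the converse and the ``in particular'' claim at once.

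None of the steps is technically deep; the one point requiring a little care is the parity fact used in the forward direction, namely that in a bipartite graph the length of every $u$--$v$ walk has parity determined solely by the sides on which $u$ and $v$ lie. I would justify this by observing that traversing each edge flips the side, so every closed walk has even length and any two walks with the same endpoints differ in length by an even number; consequently the shortest-path length inherits this fixed parity. Everything else then follows formally from the triangle inequality and the parity reformulation.
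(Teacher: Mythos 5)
Your proposal is correct, and on the parts where the paper actually gives an argument it coincides with it: the first inequality is the same triangle-inequality computation, and your treatment of the final ``in particular'' claim (and of the backward implication, which it subsumes) is exactly the paper's construction --- partition $V(G)$ by the parity of $d_G(x,\cdot)$ for the one distinguished vertex $x$ and check that every edge crosses the partition. The genuine difference is in the biconditional: the paper does not prove it at all, but instead cites an external reference (Thm.~2.3 of Ovchinnikov's book), whereas you supply a self-contained proof of the forward direction via the standard walk-parity fact (each edge traversal switches sides, so the parity of any $u$--$v$ walk, and hence of $d_G(u,v)$, is determined by the sides containing $u$ and $v$), and you observe that the backward direction follows for free from the single-vertex colouring. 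What your route buys is a fully self-contained lemma with no literature dependence, organized economically so that one colouring argument settles two of the three claims; what the paper's route buys is brevity. One pedantic point you handle better than the paper but should still flag: under the paper's definition of bipartite (both parts non-empty), the colouring argument needs $G$ to contain at least one edge, which is automatic for connected $G$ with $|V(G)|\geq 2$ but fails for $K_1$; this degenerate case is ignored by both you and the paper.
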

\begin{proof}
	The first statement is an easy consequence of the triangle inequality, 
	that is, $d_G(x,u)\leq d_G(x,v) + d_G(u,v)$ implies 
	$|d_G(x,u)-d_G(x,v)|\leq d_G(u,v) = 1$ for all edges $\{u,v\}$ and all vertices $x$.
	The second statement is equivalent to \cite[Thm.\ 2.3]{ovchinnikov2011graphs}. 
	Suppose now that there is a vertex $x$ in $G=(V,E)$ such that
	 $d_G(x,u) \neq d_G(x,v)$ for all edges $\{u,v\}\in E$. 
	 Let $V_1\coloneqq \{w\in V\mid d_G(x,w) \text{ is odd}\}$. 
	 $V_2\coloneqq \{w\in V\mid d_G(x,w) \text{ is even}\}$ and  
	 $\{u,v\}\in E$. Since $d_G(x,u) \neq d_G(x,v)$ and by the first 
	 statement, it holds $u\in V_i$ and $v\in V_j$, $i\neq j$.
	 As this holds for all edges in $E$, the sets $V_1,V_2$ form 
	 a valid bipartition of $G$.
\end{proof}

The \emph{interval} between $x$ and $y$ is the set $I_G(x,y)$ of all vertices
that lie on shortest $(x,y)$-paths. A vertex $x$ is a \emph{median} of a triple
of vertices $u$, $v$ and $w$ if $d(u,x)+d(x,v)=d(u,v)$, $d(v,x)+d(x,w)=d(v,w)$
and $d(u,x)+d(x,w)=d(u,w)$. Equivalently, $x$ is a median of $u$, $v$ and $w$ if
$x\in I_G(u,v,w)\coloneqq I_G(u,v)\cap I_G(u,w)\cap I_G(v,w)$
\cite{mulder1980interval}. If $I_G(u,v,w) = \{x\}$ consist of $x$ only, we put
$\med_G(u,v,w)\coloneqq x$ and say that \emph{$\med_G(u,v,w)$ is well-defined}.
A graph $G$ is a \emph{median graph} if, for every triple $u$, $v$ and $w$ of
its vertices, $\med_G(u,v,w)$ is well-defined. \cite{Mulder:78, SM:99}. A graph
is $G$ \emph{modular}, if $I_G(u,v,w)\neq \emptyset$ for all $u,v,w\in V$. One
easily verifies that median graphs are modular and that modular graphs must be connected. 
Following \cite{mulder1980interval}, a graph $G$ is called \emph{interval-monotone} if,
for all $u,v\in V$, $I(x,y) \subseteq I(u,v)$ for all $x,y\in I(u,v)$, i.e., the
induced subgraph $G[I(u,v)]$ is convex. 
If the context is clear, we may write $I(\cdot,\cdot)$, resp., $I(\cdot,\cdot,\cdot)$
instead of $I_G(\cdot,\cdot)$, resp., $I_G(\cdot,\cdot,\cdot)$ for a given graph
$G$.

For later reference, we summarize here the results Prop.\ 1.1.2(iii) and 1.1.3. established
in \cite{mulder1980interval}. 
\begin{lemma}\label{lem:mulder-interval}
Let $G$ be a connected graph. Then, $x\in I_G(u,v)$ implies that $I_G(u,x)\subseteq I_G(u,v)$. 
Moreover, for any three vertices $u,v,w$ of $G$ there exists a vertex $z\in I_G(u,v)\cap I_G(u,w)$
such that $I_G(z,v)\cap I_G(z,w) = \{z\}$.
\end{lemma}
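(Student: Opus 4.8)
The plan is to prove the two assertions separately, both via the triangle inequality together with the defining distance-sum characterization of intervals, namely that $x\in I_G(u,v)$ holds if and only if $d_G(u,x)+d_G(x,v)=d_G(u,v)$.

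For the first (monotonicity) statement, I would start from $x\in I_G(u,v)$ and an arbitrary $y\in I_G(u,x)$, and aim to establish $d_G(u,y)+d_G(y,v)=d_G(u,v)$. Expanding $d_G(u,v)=d_G(u,x)+d_G(x,v)=d_G(u,y)+d_G(y,x)+d_G(x,v)$ and bounding $d_G(y,v)\le d_G(y,x)+d_G(x,v)$ by the triangle inequality gives $d_G(u,y)+d_G(y,v)\le d_G(u,v)$; since the reverse inequality is again just the triangle inequality, equality holds and hence $y\in I_G(u,v)$. As $y$ was arbitrary, $I_G(u,x)\subseteq I_G(u,v)$.

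For the second statement, the key idea is to choose $z$ to be a vertex in $S\coloneqq I_G(u,v)\cap I_G(u,w)$ that \emph{maximizes} $d_G(u,z)$; such a $z$ exists because $u\in S$ and $G$ is finite. I would then argue by contradiction, assuming there is a $y\in I_G(z,v)\cap I_G(z,w)$ with $y\neq z$. Using the first statement together with the symmetry $I_G(u,v)=I_G(v,u)$, from $z\in I_G(u,v)$ one obtains $I_G(z,v)\subseteq I_G(u,v)$, and from $z\in I_G(u,w)$ one obtains $I_G(z,w)\subseteq I_G(u,w)$; hence $y\in S$. A short distance computation, combining $d_G(u,z)+d_G(z,v)=d_G(u,v)$, $d_G(z,y)+d_G(y,v)=d_G(z,v)$, and $d_G(u,y)+d_G(y,v)=d_G(u,v)$, then yields $d_G(u,y)=d_G(u,z)+d_G(z,y)>d_G(u,z)$ because $y\neq z$ forces $d_G(z,y)\ge 1$, contradicting the maximality of $d_G(u,z)$ over $S$. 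Since $z$ trivially lies in $I_G(z,v)\cap I_G(z,w)$, this proves $I_G(z,v)\cap I_G(z,w)=\{z\}$.

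The main obstacle, such as it is, lies in the second part: one must recognize the right extremal quantity to optimize, namely the distance from $u$ within the intersection $S$, and then wire the monotonicity statement into the inclusions $I_G(z,v)\subseteq I_G(u,v)$ and $I_G(z,w)\subseteq I_G(u,w)$, so that the hypothetical vertex $y$ is seen to lie back in $S$ and can contradict maximality. The remaining manipulations are routine triangle-inequality bookkeeping.
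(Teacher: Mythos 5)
Your proof is correct. Note, however, that the paper does not prove this lemma at all: it is stated as a summary of Prop.\ 1.1.2(iii) and 1.1.3 of Mulder's monograph \cite{mulder1980interval} and imported by citation, so there is no in-paper argument to compare against. Your write-up supplies exactly what is missing: the first part is the standard triangle-inequality computation, and for the second part your extremal choice of $z$ (maximizing $d_G(u,z)$ over $S=I_G(u,v)\cap I_G(u,w)$, which is nonempty since $u\in S$, and finite since the paper assumes finite graphs) together with the inclusions $I_G(z,v)\subseteq I_G(u,v)$ and $I_G(z,w)\subseteq I_G(u,w)$ obtained from the first part via symmetry of intervals is precisely the classical route to this result; the distance bookkeeping $d_G(u,y)=d_G(u,z)+d_G(z,y)>d_G(u,z)$ cleanly contradicts maximality. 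One small virtue of your argument worth keeping: it covers degenerate cases (e.g.\ $v=w$ or $u=v$) without any case distinction, since the extremal choice and the inclusions hold verbatim there.
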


In the upcoming proofs the following definition will play a particular role.
\begin{definition}[distance-$\ell$-static and meshed]
	Let $G=(V,E)$ be a graph.  A quartet $(x,z,y,w)\in V^4$ 
	is \emph{distance-$\ell$-static} for some $\ell\geq 1$, if
	$d_G(x, w) = d_G(z, w) = \ell = d_G(y, w) - 1$ and
	$d_G(x, z) = 2$ with $y$ a common neighbor of $x$ and $z$. 

	$G$ is called \emph{meshed} if all distance-$\ell$-static quartets
	$(x,z,y,w)\in V^4$ satisfy the \emph{quadrangle property}: \emph{there
	exists a common neighbor $u$ of $x$ and $z$ with $d_G(u, w) = \ell - 1$.}
\end{definition}
Meshed graphs play a central role in the characterization of median graphs
\cite{BRESAR2002149,SM:99}. The vertices $x,z,y,w$ in an distance-$\ell$-static
quartet $(x,z,y,w)$ must, by definition, be pairwise distinct. Note that
$(x,z,y,w)$ is distance-$\ell$-static precisely if $(z,x,y,w)$ is
distance-$\ell$-static. The order of the remaining vertices  
in an distance-$\ell$-static quartet
$(x,z,y,w)$ matters, since the last vertex $w$ serves as ``reference'' to the
vertex to which the distances are taken for $x,y,z$ and 
the third vertex $y$ implies that  implies that $x,y,z$ must induce a
path $P_3$ with edges $\{x,y\}$ and $\{y,z\}$.

\section{Main results}
\label{sec:basic-k-median}

Consider a graph $G = (V,E)$ and let $W\subseteq V$ denote the set of vertices
obtained from $V$ by removing any three vertices $u,v,w$ for which
$\med_G(u,v,w)$ is not well-defined. In this case, $W$ contains all vertices
$\mu$ for which $\med_G(\mu,v,w)$ is well-defined for all $v,w\in V$ which gives
rise to the following
\begin{definition}[\mc vertices and $k$-median graph]
A vertex $\mu\in V$ is \emph{median-consistent (\mc)} in a graph $G$ if
$\med_G(\mu,v,w)$ is well-defined for all distinct $v,w\in V(G)\setminus
\{\mu\}$. 

A graph is a \emph{$k$-median graph} if it has $k\geq 1$ \mc vertices. Moreover,
a $k$-median graph is a \emph{proper} $k$-median graph if it has precisely $k$
\mc vertices.
\end{definition}

\begin{figure}[t] 
\centering
\includegraphics[width=.7\textwidth]{./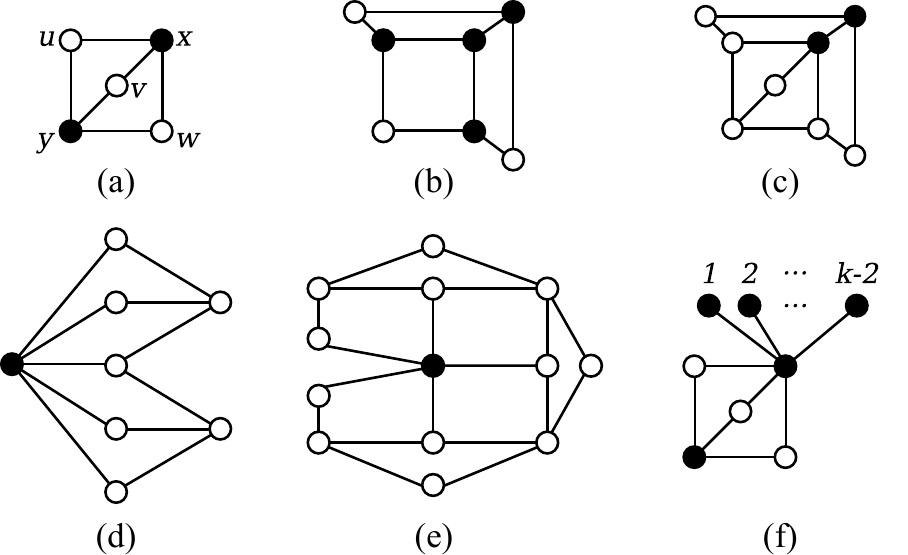}
\caption{Shown are several $k$-median graphs. The respective \mc vertices are
         highlighted as black vertices. Shown are (a) a $K_{2,3}$, (b) a
         $Q_3^-$, (c) a combination of both a $K_{2,3}$ and $Q_3^-$, (d) a
         $Q_3^-$-free proper 1-median graph, (e) a
         $K_{2,3}$-free proper 1-median graph. (f) This graph serves as a generic example of a
         proper $k$-median graph for all $k\geq 2$.}
\label{fig:exmpls}
\end{figure}

Note that if a graph $G$ is not connected, then $I_G(x,y,z)=\emptyset$ for any
two vertices $x,y$ in one connected component and $z$ in another connected
component. Hence, disconnected graphs cannot contain any \mc vertex. We
summarize this finding and other simple results in 
\begin{obs} \label{obs:summ}
Let $G$ be a graph. 
\begin{itemize}
\item If $G$ contains a \mc vertex $\mu$, then $G$ is connected. 
\item For every $k\geq 1$, there is a proper $k$-median graph (cf.\ Fig.\
      \ref{fig:exmpls}(d,e,f)).
\item Every $k$-median graph is an $\ell$-median graph for all $\ell\in
      \{1,\dots,k\}$.
\item  A graph $G=(V,E)$ is a \emph{median graph} if and only if $G$ is
       $|V|$-median graph. 
\end{itemize}
\end{obs}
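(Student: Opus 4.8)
The plan is to treat the four items separately, since three of them reduce to unwinding the definitions of \mc vertex and $k$-median graph, whereas the existence of proper $k$-median graphs for every $k$ is the only part that demands actual constructions. So I would set up the proof as a short list of four arguments, front-loading the three routine ones and devoting the real effort to the last.

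First I would dispatch the connectivity claim by its contrapositive, reusing the remark made just before the statement. If $G$ is disconnected and $\mu$ is any vertex, then choosing $w$ in a component different from the one containing $\mu$ and any $v\notin\{\mu,w\}$ gives $I_G(\mu,w)=\emptyset$, hence $I_G(\mu,v,w)=\emptyset$; thus $|I_G(\mu,v,w)|\neq 1$ and $\mu$ is not \mc. Since $\mu$ was arbitrary, a graph possessing a \mc vertex must be connected. The monotonicity item is then immediate from the definition: being a $k$-median graph means containing at least $k$ \mc vertices (equivalently $|W|\ge k$ for the set $W$ of the introduction), and $|W|\ge k\ge \ell$ yields $|W|\ge \ell$ for every $\ell\in\{1,\dots,k\}$, so $G$ is an $\ell$-median graph.

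Next I would prove the equivalence that $G=(V,E)$ is a median graph if and only if it is a $|V|$-median graph. For the forward direction, if $G$ is a median graph then $\med_G(u,v,w)$ is well-defined for every triple, so in particular every vertex $\mu$ is \mc; as there are exactly $|V|$ vertices, $G$ has $|V|$ \mc vertices and is a $|V|$-median graph. For the converse, a $|V|$-median graph has at least $|V|$, hence all, of its vertices \mc. To conclude that every triple $u,v,w$ has a well-defined median I would split into the case of three pairwise distinct vertices — handled directly by the \mc property of $u$ — and the degenerate cases where two of them coincide, for which $I_G(u,v,v)=I_G(u,v)\cap\{v\}=\{v\}$ forces $|I_G(u,v,w)|=1$ automatically. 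This covers all triples, so $G$ is a median graph.

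The substantive part, and the step I expect to be the main obstacle, is exhibiting a proper $k$-median graph for each $k\ge 1$. Here I would give explicit constructions (the graphs in Fig.~\ref{fig:exmpls}(d,e,f)): a fixed small graph serving as a proper $1$-median graph, together with a parametrized family realizing precisely $k$ \mc vertices for each $k\ge 2$. The routine but essential verification is twofold — checking that the $k$ designated vertices are genuinely \mc, i.e.\ that $|I_G(\mu_i,v,w)|=1$ for all pairs $v,w$, and, crucially, checking that \emph{no} further vertex is \mc, which is exactly what makes the example proper. For the latter I would, for each non-designated vertex $x$, produce one explicit pair $v,w$ with $|I_G(x,v,w)|\neq 1$ — typically by locating an induced $K_{2,3}$ or $Q_3^-$ that witnesses a non-unique or missing median at $x$ — so that the total count of \mc vertices is exactly $k$.
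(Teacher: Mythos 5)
Your proposal is correct and follows essentially the same route as the paper: the paper treats this as an observation, with the connectivity claim argued exactly as you do (a disconnected graph gives $I_G(\mu,v,w)=\emptyset$ across components), the monotonicity and median-graph equivalence read off from the definitions, and the existence of proper $k$-median graphs established, as in your plan, by the explicit examples of Fig.~\ref{fig:exmpls}(d,e,f) rather than by any abstract argument. Your extra care with degenerate triples (two coinciding vertices) in the $|V|$-median equivalence is a small refinement the paper leaves implicit, not a different approach.
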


The following result shows that there are no proper $k$-median graphs $G$ with
$k\geq |V|-2$ and provides a new although rather simple characterization of
median graphs.
\begin{proposition}\label{prop:n-12IMPLIESmedianG}
A graph $G=(V,E)$ is a median graph if and only if $G$ is a $(|V|-1)$- or a
$(|V|-2)$-median graph. Hence, there is no proper $(|V|-1)$- or $(|V|-2)$-median
graph. 
\end{proposition}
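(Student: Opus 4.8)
The plan is to prove both implications, with the forward one being immediate from Observation~\ref{obs:summ} and the converse resting on a single symmetry observation combined with a counting argument.

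First I would dispose of the ``only if'' direction. If $G$ is a median graph, then by Observation~\ref{obs:summ} it is a $|V|$-median graph, i.e.\ every vertex is \mc. Since every $k$-median graph is an $\ell$-median graph for all $\ell\in\{1,\dots,k\}$ (again Observation~\ref{obs:summ}), $G$ is in particular a $(|V|-1)$- and a $(|V|-2)$-median graph.

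For the converse, the key point I would isolate is that $I_G(u,v,w)=I_G(u,v)\cap I_G(u,w)\cap I_G(v,w)$ is invariant under every permutation of the triple $u,v,w$: each pairwise interval satisfies $I_G(a,b)=I_G(b,a)$ because $d_G(a,b)=d_G(b,a)$, and intersection is commutative. Consequently, whether $\med_G(u,v,w)$ is well-defined, i.e.\ whether $|I_G(u,v,w)|=1$, does not depend on which of the three vertices we single out as the ``center''. In particular, if at least one of $u,v,w$ is \mc, then $\med_G(u,v,w)$ is automatically well-defined.

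Then I would run the counting argument. Assume $G$ is a $(|V|-1)$- or a $(|V|-2)$-median graph, so at most two vertices of $G$ fail to be \mc. Let $u,v,w$ be an arbitrary triple of pairwise distinct vertices. Since the triple has three members while at most two vertices of $G$ are non-\mc, at least one of $u,v,w$ is \mc, and by the permutation-invariance just noted $\med_G(u,v,w)$ is well-defined. As the triple was arbitrary, $G$ is a median graph, hence a $|V|$-median graph in which \emph{every} vertex is \mc. This simultaneously establishes the ``if'' direction and shows that the number of non-\mc vertices can never be exactly one or exactly two; therefore no proper $(|V|-1)$- or $(|V|-2)$-median graph exists. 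I expect no genuine obstacle here: the only idea that must be made explicit is the permutation-invariance of $I_G(u,v,w)$, after which the statement reduces to the pigeonhole fact that any triple of distinct vertices contains an \mc vertex once fewer than three vertices are non-\mc.
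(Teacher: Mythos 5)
Your proof is correct and takes essentially the same route as the paper: the paper argues the contrapositive (a non-median graph has a triple with ill-defined median, and by the symmetry of $I_G(u,v,w)$ all three of its members fail to be \mc, forcing $k\leq |V|-3$), which is exactly your pigeonhole argument read backwards. The only cosmetic difference is that you make the permutation-invariance of $I_G(u,v,w)$ explicit, whereas the paper uses it implicitly in concluding that neither $y$ nor $z$ can be a \mc vertex.
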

\begin{proof}
If $G=(V,E)$ is median graph, then the last two statements in Obs.\
\ref{obs:summ} imply that $G$ is a $(|V|-1)$- or $(|V|-2)$-median graph. By
contraposition, assume that $G$ is not a median graph. In this case, there is at
least one vertex $x$ that is not a \mc vertex of $G$. Thus, there are two
vertices $y$ and $z$ such that $x,y$ and $z$ are pairwise distinct and
$\med(x,y,z)$ is not well-defined. Hence, neither $y$ nor $z$ can be a \mc
vertex. Hence, $G$ is either no $k$-median graph at all or, if $G$
is a $k$-median graph, then
 $k\leq|V|-3$ which implies that $k\notin \{|V|-1, |V|-2\}$.
\end{proof}

In contrast to median graphs, $k$-median graphs may contain an induced $K_{2,3}$
(cf.\ Fig.\ \ref{fig:exmpls}). Moreover, median graphs are characterized as
those graphs $G$ for which the convex hull of every isometric cycle $C$ is a hypercube (cf.\
\cite[Thm.\ 5]{SM:99}). As a $Q_3^-$ is a $4$-median graph that contains an
isometric cycle $C_6$, the latter property is, in general, not satisfied for
$k$-median graphs.

We give now a brief overview of our main results. In Section
\ref{sec:conditions}, we provide several necessary conditions and show, among
other results, that every $k$-median is bipartite, $K_{3,3}$-free and that every
edge $e\in E(C)$ of every cycle $C$ of $G$ is also part of an induced $C_4$ in
$G$. Moreover, we characterize the existence of induced $K_{2,3}$s 
in $k$-median graphs $G$ and 
show, in addition, that $G$ contains a $Q_3^-$ precisely if it contains an induced $C_6$.
We then continue in Section \ref{sec:convex} with a generalization of
convex subgraphs. 

\begin{definition}[$v$-convex]
	Let $G$ be a  graph and $v\in V(G)$. A subgraph $H$ of $G$ is
	\emph{$v$-convex} if $v\in V(H)$ and every shortest path connecting $v$ and
	$x$ in $G$ is also contained in $H$, for all $x\in V(H)$.
\end{definition} 
Note that $v$-convex subgraphs $H$ of $G$ satisfy $I_G(v,x)\subseteq V(H)$ for
all $x\in V(H)$. Moreover, $v$-convex subgraphs are connected but not
necessarily induced, isometric or convex, see Fig.\ \ref{fig:exmpl-v-convex} for
an example. We then derive

\begin{customthm}{1}[Thm.\ \ref{thm:char-k-median}]
	$G=(V,E)$ is a $k$-median graph if and only if there $k$ vertices
	$\mu_1,\dots,\mu_k\in V$ such that $\mu_i$ is a \mc vertex in every
	$\mu_i$-convex subgraph of $G$, $1\leq i\leq k$.
\end{customthm}

\begin{figure}[t]
\centering
\includegraphics[width=.8\textwidth]{./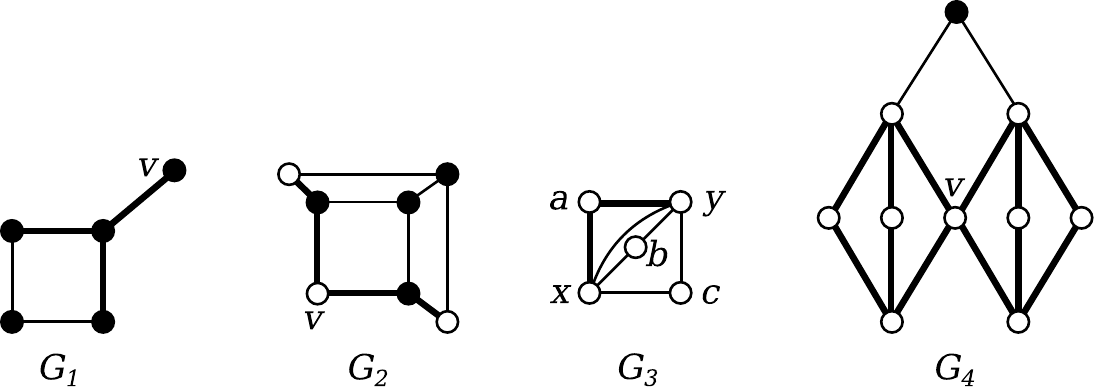}
\caption{Shown are four graphs $G_1,G_2,G_3$ and $G_4$. \MC vertices are
         highlighted as black vertices and subgraphs $H_i$ of $G_i$, $1\leq i\leq 4$,
         are highlighted by thick edges. 
         All $H_i$ are $v$-convex subgraphs of $G_i$ but not convex.
    	  Since $G_1$ is a
         median graph and $v$ a \mc vertex of $G_1$, $H_1$ is isometric and thus,
         induced (cf.\ Lemma \ref{lem:rho-convex=>isometric}). In contrast, $v$
         is not a \mc vertex in the graph $G_2$. Although $H_2$ is $v$-convex and
         induced, it is not isometric. The graph $G_3$ does not contain any \mc
         vertices. The subgraph $H_3$ of $G_3$ is $a$-convex, but not induced.
         Adding the edge $\{x,y\}$ to $H_3$ would yield another $a$-convex
         subgraph that is also convex. 
         Note that $I\coloneqq I_{G_3}(a,b,c) = \{x,y\}$. Hence,
         $G_3[I]\simeq K_2$ consists of the edge $\{x,y\}$ and is thus,
         connected and non-empty. 
         The $v$-convex subgraph $H_4$ of $G_4$ is not a $1$-median graph.
				         According to Lemma \ref{lem:v-convex-bip}, 
          $v$-convex subgraphs with specified vertex set
         are uniquely determined in bipartite graphs.
}
\label{fig:exmpl-v-convex}
\end{figure}

This, in turn, yields a well-known characterization of median graphs stating
that $G$ is a median graph if and only if every convex subgraph of $G$ is a
median graph, see e.g.\ \cite{mulder1980interval}.
We then make frequent use of the following conditions (C0), (C1) 
and (C2).

\begin{definition} 
Let  $G=(V,E)$ be a graph and $u\in V$. Then, $G$ satisfies 	
\begin{description}
	
	\item[$\quad$ \textbf{(C0) w.r.t.\ $\mathbf{u}$} \textnormal{\ if:}] 
			$I_G(u,v)\cap I_G(v,w) = \{v\}$ implies $v\in I_G(u,w)$, for all
			$v,w\in V$. 

	\item[$\quad$ \textbf{(C1) w.r.t.\ $\mathbf{u}$} \textnormal{\ if:}] 
		$G[I_G(u,v,w)]$ contains at least one vertex and is connected, for all
		$v,w\in V$.

	\item[$\quad$ \textbf{(C2) w.r.t.\ $\mathbf{u}$} \textnormal{\ if:}] 
				$G[I_G(u,v,w)]$ contains an edge whenever it contains more than one vertex, 

\hspace{2.cm}				 for all		$v,w\in V$.
\end{description}
\end{definition}
Note that $G$ satisfies (C0) w.r.t.\ $u$ if and only if $I_G(u,v)\cap I_G(v,w) =
\{v\}$ implies that $d_G(u,v)+d_G(v,w) = d_G(u,w)$ for all $v,w\in V$. Moreover,
if $G$ satisfies (C1) w.r.t.\ some $u\in V$, then $G$ must be connected as,
otherwise, $I_G(u,v,w)=\emptyset$ for $u$ and $v$ being in distinct connected
components. One easily verifies that every median graph satisfies (C0), (C1) and
(C2) w.r.t.\ each of its vertices. A simple example of a graph that does neither
satisfy (C0) nor (C1) w.r.t.\ some vertex $u$ is an induced $C_6$. To see this,
let $G\simeq C_6$ and $v$ and $w$ be the vertices in $G$ that have distance 2 to
$u$. In this case, $G[I(u,v,w)]$ does not contain any vertices and $I_G(u,v)\cap
I_G(v,w) = \{v\}$ although the unique shortest path connecting $u$ and $w$ does
not contain $v$, that is, $v\notin I_G(u,w)$. In contrast, $G\simeq C_6$
trivially satisfies (C2) w.r.t.\ all of its vertices, since $|I(u,v,w)|$ is
either empty or contains precisely one element for all $u,v,w\in V$ Consider now
the graph $G\coloneqq G_3$ in Fig \ref{fig:exmpl-v-convex}. The set $I_G(a,b,c)
= \{x,y\}$ is the only one among the other $I_G(a,\cdot,\cdot)$ that contains
more than one vertex. Moreover, $G[I_G(a,b,c)]$ contains the edge $\{x,y\}$ and
is, thus, connected. Hence $G$ satisfies (C2) w.r.t.\ $a$. However,
$I_G(a,x,y)=\emptyset$ which implies that $G$ does not satisfy (C1) w.r.t.\ $a$.
Moreover, $G$ does not satisfy (C0) w.r.t.\ $a$, since $I_G(a,x)\cap
I_G(a,y)=\{a\}$ but $a\notin I_G(x,y)$.

We provide first a characterization that is based on Condition (C1). 
\begin{customthm}{2}[Thm.\ \ref{thm:char3}]
A graph $G$ is a $k$-median graph if and only if $G$ satisfies (C1) w.r.t.\
w.r.t.\ $\mu_1,\dots,\mu_k\in V(G)$. In this case, the vertices
$\mu_1,\dots,\mu_k\in V$ are \mc vertices of $G$.
\end{customthm}

Mulder provided the following characterization of median graphs.
\begin{theorem}[{\cite[Thm.\ 3.1.7]{mulder1980interval}}]\label{thm:mulder-median}
	A graph $G$ is a median graph if and only if 
	$G$ is connected, interval-monotone and satisfies (C0) w.r.t.\ all of its vertices.  
\end{theorem}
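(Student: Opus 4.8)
The plan is to prove Mulder's characterization (Theorem~\ref{thm:mulder-median}) by leveraging the framework already developed in this paper, rather than reproving everything from scratch. The key observation is that this theorem should follow by combining the new $k$-median machinery with the two conditions (C0) and (C1). Specifically, I would aim to show that for a connected graph $G$, the conjunction of interval-monotonicity and (C0) w.r.t.\ all vertices is equivalent to $G$ satisfying (C1) w.r.t.\ all vertices, since by the last statement of Observation~\ref{obs:summ} together with Characterization~2 (Thm.~\ref{thm:char3}), $G$ is a median graph if and only if every vertex is a \mc vertex if and only if $G$ satisfies (C1) w.r.t.\ all $\mu\in V$.

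\medskip

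First I would establish the forward direction: every median graph is connected and satisfies (C0) w.r.t.\ each vertex (both already noted in the text), and interval-monotonicity follows because in a median graph every interval $I(u,v)$ induces a convex subgraph. For the latter I would invoke the standard fact that median graphs are interval-monotone, or derive it from the well-definedness of medians: given $x,y\in I(u,v)$ and $z\in I(x,y)$, I would show $z\in I(u,v)$ by computing $\med(u,v,z)$ and verifying it must equal $z$ using the additivity of distances along $I(u,v)$. This direction is the routine one.

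\medskip

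The substantive direction is the converse: assuming $G$ is connected, interval-monotone, and satisfies (C0) w.r.t.\ all vertices, I must show $G$ is a median graph, i.e.\ $|I(u,v,w)|=1$ for every triple. I would first use Lemma~\ref{lem:mulder-interval} to locate a vertex $z\in I(u,v)\cap I(u,w)$ with $I(z,v)\cap I(z,w)=\{z\}$; this $z$ is the natural median candidate relative to the ``apex'' $u$. Applying (C0) w.r.t.\ $z$ to the pair $v,w$ (using that $I(z,v)\cap I(z,w)=\{z\}$) would force the distance-additivity $d(z,v)+d(z,w)=d(v,w)$, placing $z\in I(v,w)$ and hence $z\in I(u,v,w)$, establishing nonemptiness. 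For uniqueness, I would suppose a second median $z'\in I(u,v,w)$ existed and derive a contradiction: interval-monotonicity guarantees that both $z$ and $z'$, together with all shortest paths between them, lie inside the convex sets $I(u,v)$, $I(u,w)$, $I(v,w)$, hence inside $I(u,v,w)$, and I would then argue that any edge along a $z$--$z'$ geodesic violates either the additivity enforced by (C0) or the minimality built into the choice of $z$.

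\medskip

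The main obstacle I anticipate is the uniqueness argument, since nonemptiness follows fairly directly from Lemma~\ref{lem:mulder-interval} and one application of (C0). The difficulty is that (C0) is a ``local-to-global'' additivity condition triggered only when an intersection collapses to a single vertex, so I expect to need a careful inductive or extremal argument---likely choosing $z$ and $z'$ at minimal distance, or choosing a geodesic edge $\{z,z''\}$ with $z''\in I(u,v,w)$ and deriving $I(u,z)\cap I(z,z'')=\{z\}$ to invoke (C0) w.r.t.\ $u$---to close the gap between the merely combinatorial nonemptiness and the full median property. Interval-monotonicity will be the crucial hypothesis here, as it is precisely what guarantees that the whole interval structure stays inside $I(u,v,w)$ and prevents the median set from splitting into disconnected pieces.
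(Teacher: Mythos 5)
The first thing to note is that the paper does not prove this statement at all: Theorem \ref{thm:mulder-median} is quoted verbatim from Mulder \cite{mulder1980interval} and used as a black box. So your derivation from the paper's own machinery is genuinely new content, and it is non-circular: Lemma \ref{lem:mulder-interval}, Theorem \ref{thm:char3} and Theorem \ref{thm:char-C0} nowhere rely on Theorem \ref{thm:mulder-median}. Your plan is sound, and the uniqueness step you flag as the main obstacle closes much more easily than you fear --- no induction or extremal choice is needed, just the ``$d(v,w)=d(v,w)-2$'' telescoping argument already used in the paper's proofs of Theorems \ref{thm:char3} and \ref{thm:char-C0}. Concretely: (C0) w.r.t.\ $u$ alone forces $d(u,a)\neq d(u,b)$ for every edge $\{a,b\}$, because $d(u,a)=d(u,b)$ gives $I(u,a)\cap I(a,b)=\{a\}$ (as $I(a,b)=\{a,b\}$ and $b\notin I(u,a)$), whence $a\in I(u,b)$ by (C0), which is impossible; by Lemma \ref{lem:01-diff-adge} the two distances then differ by exactly one. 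Now if $z\neq z'$ both lie in $I(u,v,w)$, interval-monotonicity places every vertex of a shortest $(z,z')$-path in $I(u,v)\cap I(u,w)\cap I(v,w)$, so there is an edge $\{a,b\}$ inside $I(u,v,w)$ with, say, $d(u,b)=d(u,a)+1$; then $a,b\in I(u,v)$ gives $d(b,v)=d(a,v)-1$, likewise $d(b,w)=d(a,w)-1$, and $a,b\in I(v,w)$ yields $d(v,w)=d(v,b)+d(b,w)=d(v,w)-2$, a contradiction. In effect your proof shows that interval-monotonicity together with (C0) supplies exactly the hypotheses (bipartiteness, plus an edge in any triple intersection with more than one vertex) under which the argument of Theorem \ref{thm:char-C0} runs, so Mulder's theorem falls out as a specialization of the paper's characterization --- a tidy by-product the paper itself does not record.

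Two small repairs. In the nonemptiness step, the premise $I(z,v)\cap I(z,w)=\{z\}$ matches (C0) taken w.r.t.\ $v$ (or $w$), not w.r.t.\ $z$: the conclusion you want is $z\in I(v,w)$, so $v$ and $w$ are the reference endpoints. This is harmless under your hypothesis of (C0) w.r.t.\ all vertices, but note that the paper's proof of Theorem \ref{thm:char-C0}, which has (C0) only w.r.t.\ $\mu$, applies Lemma \ref{lem:mulder-interval} with permuted roles precisely to avoid this issue. Second, in the forward direction, ``median implies interval-monotone'' should simply be cited from \cite{mulder1980interval}: your sketched verification via computing $\med(u,v,z)$ for $z\in I(x,y)$, $x,y\in I(u,v)$ is not the routine distance computation you suggest, since the natural estimate $d(u,z)+d(z,v)\leq d(u,x)+d(x,y)+d(y,v)$ can exceed $d(u,v)$ when $x$ and $y$ sit on parallel geodesics; the standard proofs of this fact require more work.
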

By definition, $k$-median graphs are connected and, as shall we see later in Lemma \ref{lem:C0}, 
always satisfy (C0) w.r.t.\ $k$ of its vertices. However, $k$-median
graphs are, in general, not interval-monotone. 
The simplest example is possibly the $2$-median graph $K_{2,3}$ as
shown in Fig.\ \ref{fig:exmpls}(a) with bipartition $\{x,y\}\cupdot \{u,v,w\}$. 
One observes that  $I(u,v)=\{u,v,x,y\}$ and, since $w\in I(x,y)$,
it holds that $I(x,y)\not\subseteq I(u,v)$. Hence, the subgraph 
induced by $I(u,v)$ is not convex. This
begs the question to what extent Theorem \ref{thm:mulder-median}
can be generalized to cover the properties of $k$-median graphs. 
As it turns out, interval-monotonicity can be 
replaced by Condition (C2) together with bipartiteness.

\begin{customthm}{3}[Thm.\  \ref{thm:char-C0}]
A graph $G$ is a $k$-median graph if and only if $G$ is connected, bipartite and
satisfies (C0) and (C2) w.r.t.\ $\mu_1,\dots,\mu_k\in V(G)$. In this case, the
vertices $\mu_1,\dots,\mu_k\in V$ are \mc vertices of $G$.
\end{customthm}

The latter results naturally translate into 
novel characterizations of median graphs. 

\begin{customthm}{4}[Thm.\ \ref{thm:char-k-median}, \ref{thm:med-novel} \& \ref{thm:imrich}]
For every graph $G$, the following statements are equivalent.
\begin{enumerate}
\item  $G$ is a median graph.
\item Every $v\in V(G)$ is a \mc vertex in every $v$-convex subgraph of $G$.
\item $G$ satisfies (C1) w.r.t.\ all of its vertices.
\item $G$ is connected, bipartite and satisfies (C0) and (C2) w.r.t.\ all of its vertices.
\item The graph $G(u,v,w)\coloneqq G(u,v)\cap G(u,w)\cap G(v,w)$ is not empty
      and connected for all $u,v,w\in V$ where $G(x,y)$ denotes the subgraph of
      $G$ with $V(G(x,y)) = I_G(x,y)$ and where $E(G(x,y))$ consists precisely
      of all edges that lie on the shortest $(x,y)$-paths, $x,y\in V$.
\end{enumerate}
\end{customthm}

\section{Necessary Conditions and Subgraphs}
\label{sec:conditions}

We provide in this section several properties that must be satisfied by every
$k$-median graph. We start with considering distances of \mc vertices to
adjacent vertices and to vertices on isometric cycles.

\begin{lemma}\label{lem:adjdisttomedian}
	Let $G=(V,E)$ be a $k$-median graph and $\mu$ be a \mc vertex in $G$. Then
	$|d_G(\mu,u)-d_G(\mu,v)|=1$ for all edges
	$\{u,v\}\in E$. 
\end{lemma}
\begin{proof}
	Let $G=(V,E)$ be a $k$-median graph with \mc vertex $\mu$ and $\{u,v\}\in E$
	be an edge. If $u=\mu$ or $v=\mu$, then the statement is vacuously true.
	Hence, suppose that $u,v\neq \mu$. Since $\{u,v\}\in E$, we
	have $I_G(u,v) =\{u,v\}$. Moreover, since $\mu$ is a \mc vertex in $G$ it
	must hold $|I_G(\mu,u)\cap I_G(\mu,v)\cap I_G(u,v)| =1$. Hence, we may
	assume, w.l.o.g., that $\med(\mu,u,v)=u$. Then, by the definition of
	medians, $d_G(\mu,v)=d_G(\mu,u)+d_G(u,v)=d_G(\mu,u)+1$ and, therefore,
	$d_G(\mu,v) - d_G(\mu,u)=1$.  
\end{proof}

Lemma \ref{lem:01-diff-adge} and \ref{lem:adjdisttomedian} imply
\begin{proposition}\label{prop:bipthm}	
 Every $k$-median graph is bipartite.
\end{proposition}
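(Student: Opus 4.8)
The plan is to derive Proposition~\ref{prop:bipthm} directly from the two preceding lemmas, exactly as the sentence preceding the statement announces. First I would fix an arbitrary $k$-median graph $G=(V,E)$; by the first item of Observation~\ref{obs:summ} the existence of a \mc vertex already guarantees that $G$ is connected, so the distance $d_G$ is well-defined on all of $V$ and Lemma~\ref{lem:01-diff-adge} applies. Since $G$ is a $k$-median graph with $k\geq 1$, it contains at least one \mc vertex $\mu$.

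The key step is to feed $\mu$ into the distance condition. By Lemma~\ref{lem:adjdisttomedian}, the \mc vertex $\mu$ satisfies $|d_G(\mu,u)-d_G(\mu,v)|=1$ for every edge $\{u,v\}\in E$; in particular $d_G(\mu,u)\neq d_G(\mu,v)$ for all $\{u,v\}\in E$. This is precisely the hypothesis of the last sentence of Lemma~\ref{lem:01-diff-adge}, namely that there exists a vertex $x$ (here $x=\mu$) with $d_G(x,u)\neq d_G(x,v)$ for every edge. That lemma then concludes that $G$ is bipartite, which is the desired statement.

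The argument is essentially a one-line combination, so I do not anticipate a genuine obstacle; the only thing to be careful about is confirming that the chain of hypotheses lines up. Specifically, Lemma~\ref{lem:adjdisttomedian} is stated for \emph{all} edges simultaneously with respect to the single fixed \mc vertex $\mu$, which is exactly the ``there is a vertex $x$ such that $d_G(x,u)\neq d_G(x,v)$ for all $\{u,v\}\in E(G)$'' clause of Lemma~\ref{lem:01-diff-adge}, so no additional vertex-by-vertex bookkeeping is needed. Thus the entire proof reduces to: take any \mc vertex, apply Lemma~\ref{lem:adjdisttomedian} to obtain strict inequality across every edge, and invoke the final clause of Lemma~\ref{lem:01-diff-adge} to conclude bipartiteness.
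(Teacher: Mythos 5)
Your proof is correct and is exactly the paper's argument: the paper derives Proposition~\ref{prop:bipthm} by combining Lemma~\ref{lem:adjdisttomedian} (strict distance difference across every edge from a \mc vertex) with the final clause of Lemma~\ref{lem:01-diff-adge}, just as you do. Your additional care in citing Observation~\ref{obs:summ} for connectedness is a harmless, slightly more explicit justification of why the lemmas apply.
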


\begin{lemma}\label{lem:cndist} 
Let $G=(V,E)$ be a $k$-median graph and $\mu$ be a \mc vertex in $G$. Moreover,
suppose that $G$ contains an induced cycle $C$. Let $v\in V(C)$ be a vertex in
$C$ that is closest to $\mu$ with $K\coloneqq d_G(v,\mu) = \min_{w\in V(C)}
d_G(w,\mu)$. Furthermore, let $u\in V(C)$ and put $i\coloneqq d_C(u,v)$. Then, 
	\begin{equation*}
		d_G(\mu,u)\in
		\begin{cases}
			\{K+1,K+3,\ldots , K+i\} & \text{if $i$ is odd}\\
			\{K, K+2,\ldots , K+i\} & \text{if $i$ is even.}
			\end{cases}       
	\end{equation*}
	
	\end{lemma}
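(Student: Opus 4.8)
The plan is to pin down $d_G(\mu,u)$ by combining three elementary facts: a parity constraint coming from how distances to $\mu$ behave along edges, a lower bound from the minimality of $K$, and an upper bound from the triangle inequality. Together these will force $d_G(\mu,u)$ into precisely the stated set.

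First I would fix a shortest arc of $C$ from $v$ to $u$, say $v=c_0,c_1,\dots,c_i=u$ with $i=d_C(u,v)$; such an arc exists and lies in $G$ since $C$ is a subgraph. Because $\mu$ is a \mc vertex and each $\{c_{j-1},c_j\}$ is an edge, Lemma \ref{lem:adjdisttomedian} yields $|d_G(\mu,c_{j-1})-d_G(\mu,c_j)|=1$ for every $j$. (Equivalently one may invoke Prop.\ \ref{prop:bipthm} together with Lemma \ref{lem:01-diff-adge}, since bipartiteness already forces the distance to change by exactly one across any edge.) Starting from $d_G(\mu,c_0)=K$ and inducting along the arc, this gives $d_G(\mu,c_j)\equiv K+j \pmod 2$ for all $j$, and in particular $d_G(\mu,u)\equiv K+i \pmod 2$.

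Next I would establish the two bounds. The lower bound $d_G(\mu,u)\ge K$ is immediate from the minimality $K=\min_{w\in V(C)}d_G(w,\mu)$. The upper bound follows from the triangle inequality together with the fact that the chosen arc is a $(v,u)$-path in $G$ of length $i$, so that $d_G(v,u)\le d_C(v,u)=i$ and hence $d_G(\mu,u)\le d_G(\mu,v)+d_G(v,u)\le K+i$.

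Finally I would combine these. When $i$ is even, $d_G(\mu,u)$ has the same parity as $K$ and lies in the range $[K,K+i]$, hence $d_G(\mu,u)\in\{K,K+2,\dots,K+i\}$. When $i$ is odd, the parity constraint gives that $d_G(\mu,u)$ has parity opposite to $K$, so the trivial bound $d_G(\mu,u)\ge K$ improves to $d_G(\mu,u)\ge K+1$; together with $d_G(\mu,u)\le K+i$ this yields $d_G(\mu,u)\in\{K+1,K+3,\dots,K+i\}$. The only point requiring genuine care is this interaction in the odd case, where it is precisely the parity argument that upgrades the trivial lower bound $K$ to $K+1$; the remaining steps are routine bookkeeping.
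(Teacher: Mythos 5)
Your proof is correct, and it takes a genuinely different route from the paper's. The paper proves the lemma by induction on $i=d_C(u,v)$: after the base cases $i\in\{0,1\}$, it passes from a cycle-neighbour $u'$ with $d_C(v,u')=i$ to $u$, invoking Lemma \ref{lem:adjdisttomedian} at each step and ruling out the value $K-1$ by the minimality of $K$, so the admissible set of distances is rebuilt at every inductive step. You instead observe that the stated set is exactly the set of integers in the range $[K,K+i]$ having the parity of $K+i$, and you verify the defining properties separately: the congruence $d_G(\mu,u)\equiv K+i \pmod 2$ (by alternation of distances along an arc of $C$, via Lemma \ref{lem:adjdisttomedian} or, equivalently, Prop.\ \ref{prop:bipthm} with Lemma \ref{lem:01-diff-adge}), the lower bound $d_G(\mu,u)\ge K$ (minimality of $K$), and the upper bound $d_G(\mu,u)\le K+i$ (triangle inequality along the arc, which is a $(v,u)$-path of length $i$ in $G$). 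Both arguments hinge on the same key ingredient, Lemma \ref{lem:adjdisttomedian}, but your decomposition eliminates the set-valued inductive bookkeeping, makes the structure of the conclusion transparent, and incidentally shows that the hypothesis that $C$ be induced is never needed for this lemma. The one step requiring genuine care --- upgrading $d_G(\mu,u)\ge K$ to $d_G(\mu,u)\ge K+1$ when $i$ is odd --- is exactly where you invoke the parity constraint, and you handle it correctly.
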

\begin{proof} 
Let $G=(V,E)$ be a $k$-median graph, $\mu$ be a \mc vertex in $G$ and $C$ be an
induced cycle in $G$. Moreover, let $v\in V(C)$ be a vertex in $C$ that is
closest to $\mu$ and put $K\coloneqq d_G(v,\mu)$. Now consider a vertex $u$ for
which $d_C(u,v)=i\geq 0$. Note that $i\leq \frac{|V(C)|}{2}$. 

We proceed now by induction on the length $d_C(u,v)=i$. If $i=0$, we have $u=v$
and thus, trivially, $d_G(u,\mu)=K$ and $i$ is even. If $i=1$, then $\{u,v\}\in
E$. Since $v$ is a vertex in $C$ that is closest to $\mu$, we have
$d_G(\mu,u)\geq d_G(\mu,v) = K$. This together with Lemma
\ref{lem:adjdisttomedian} implies that $d_G(\mu,u)-d_G(\mu,v)=1$ and thus,
$d_G(\mu,u)\in \{K+1\}$ and $i$ is odd.

Assume, now that the statement is true for all $i$ with $0\leq i< \frac{|V(C)|}{2}$.
 Let $u\in V(C)$ be a vertex with
$d_C(u,v)=i+1$. As we already verified the cases $d_C(u,v)\in \{0,1\}$, we may
assume that $d_C(u,v)\geq 2$. By Prop.\ \ref{prop:bipthm}, 
$G$ is bipartite and thus, $|C|\geq 4$. Hence, such a vertex $u\in V(C)$ with
$d_C(u,v)\geq 2$ indeed exists. Since $d_C(u,v)\geq 2$, there is a vertex $u'\in
V(C)$ that satisfies $d_C(v,u')=i$ and $\{u',u\}\in E$. If $d_C(u,v)$ is odd,
$d_C(u',v)$ must be even and we obtain, by induction hypothesis, $d_G(\mu,u')\in
\{K, K+2,\ldots , K+i\}$. By Lemma \ref{lem:adjdisttomedian},
$|d_G(\mu,u)-d_G(\mu,u')|=1$. Note that the choice of $v$ and $d_G(v,\mu)=K$
implies that $d_G(\mu,u)=K-1$ is not possible. The latter three arguments now 
imply that $d_G(\mu,u)\in \{K+1, K+3,\ldots ,
K+(i+1)\}$. By similar arguments, if $d_C(u,v)$ is even, $d_C(u',v)$ must be odd
and $d_G(\mu,u')\in \{K+1, K+3,\ldots , K+i\}$ which together with Lemma
\ref{lem:adjdisttomedian} implies that $d_G(\mu,u)\in \{K, K+2, K+4,\ldots ,
K+(i+1)\}$. 
\end{proof}

We provide now a mild generalization of Prop.\ 5.5 in \cite{BVV:93} which states
that every \emph{modular} $k$-median graph must be $K_{3,3}$-free. 

\begin{lemma}\label{lem:K33-free}
Every $k$-median graph is $K_{3,3}$-free. 
\end{lemma}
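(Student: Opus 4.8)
The plan is to argue by contradiction: suppose a $k$-median graph $G$ contains an induced $K_{3,3}$ with bipartition $\{a_1,a_2,a_3\}\cupdot\{b_1,b_2,b_3\}$, and derive a violation of the median-consistency of some vertex. Since $G$ is a $k$-median graph, it has at least one \mc vertex $\mu$; the key obstacle is that $\mu$ need not be a vertex of the $K_{3,3}$, so I cannot directly read off medians inside it. I would therefore first locate the $K_{3,3}$ relative to $\mu$ by examining distances $d_G(\mu,a_i)$ and $d_G(\mu,b_j)$.

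The main step is a distance-layering argument using Lemma~\ref{lem:adjdisttomedian}. Because every edge of $K_{3,3}$ joins some $a_i$ to some $b_j$, and $\mu$ is \mc, each such edge satisfies $|d_G(\mu,a_i)-d_G(\mu,b_j)|=1$. Treating the six vertices as nodes colored by the parity of their distance to $\mu$, the complete bipartite adjacency forces all three $a_i$ to lie in one distance-parity class and all three $b_j$ in the other; moreover I would show the three distances $d_G(\mu,a_1),d_G(\mu,a_2),d_G(\mu,a_3)$ cannot all differ, and likewise for the $b_j$. The cleanest outcome to aim for is that, up to swapping the two sides, there exist two vertices on one side at equal distance $\ell$ from $\mu$ and a vertex on the other side at distance $\ell-1$ (a ``closest'' vertex of the $K_{3,3}$ to $\mu$). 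Concretely, pick a vertex of the $K_{3,3}$ closest to $\mu$; say it is $b_1$ with $d_G(\mu,b_1)=\ell-1$. Then its three neighbors $a_1,a_2,a_3$ all lie at distance $\ell$, and each of the remaining $b_2,b_3$ is adjacent to all of $a_1,a_2,a_3$.

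With this configuration in hand, I would exhibit a triple whose median set has more than one element, contradicting that $\mu$ is \mc. The natural candidate is the triple $(\mu,a_i,a_j)$ for two of the $a$'s: since $b_1$ is adjacent to both and $d_G(\mu,b_1)=\ell-1=d_G(\mu,a_i)-1$, the vertex $b_1$ lies in $I_G(\mu,a_i)\cap I_G(\mu,a_j)$, while the edge $\{a_i,a_j\}$ is absent (they are on the same side), so $a_i,a_j$ have distance $2$ realized through both $b_1$ and one of $b_2,b_3$. Showing that $I_G(\mu,a_i,a_j)$ contains two distinct vertices—for instance two of the $b_j$ that are simultaneously on shortest $\mu$-to-$a_i$, $\mu$-to-$a_j$, and $a_i$-to-$a_j$ paths—then violates $|I_G(\mu,a_i,a_j)|=1$. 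The hard part is verifying that the two candidate medians genuinely lie on shortest $(\mu,a_i)$- and $(\mu,a_j)$-paths, i.e.\ controlling $d_G(\mu,b_2)$ and $d_G(\mu,b_3)$; this is exactly where Lemma~\ref{lem:adjdisttomedian} (forcing each to equal $\ell-1$ once adjacent $a$'s sit at distance $\ell$ and $\mu$ cannot be closer) is needed to pin the distances down and rule out degenerate placements.

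An alternative, possibly shorter, route is to invoke the quadrangle/meshed machinery or to restrict attention to a $\mu$-convex subgraph (via Characterization~1) so that $\mu$ effectively becomes a nearest vertex and the analysis localizes; but I expect the direct distance-parity argument above to be the most self-contained, with the distance-pinning of the three $b_j$ being the one genuinely delicate point.
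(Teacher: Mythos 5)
Your overall strategy---pick a vertex of the induced $K_{3,3}$ closest to $\mu$, propagate distances with Lemma~\ref{lem:adjdisttomedian}, and exhibit a triple whose median set contains two vertices---is exactly the strategy of the paper's proof of Lemma~\ref{lem:K33-free}. However, there is a genuine gap at precisely the point you flag as delicate: the claim that $d_G(\mu,b_2)=d_G(\mu,b_3)=\ell-1$ is \emph{not} forced. Lemma~\ref{lem:adjdisttomedian} applied to the edges $\{a_i,b_2\}$ only gives $d_G(\mu,b_2)\in\{\ell-1,\ell+1\}$, and the minimality of $b_1$ only gives the lower bound $d_G(\mu,b_2)\geq \ell-1$, which does not exclude $\ell+1$. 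The assignment $d_G(\mu,b_1)=\ell-1$, $d_G(\mu,a_i)=\ell$ for all $i$, and $d_G(\mu,b_2)=d_G(\mu,b_3)=\ell+1$ is consistent with every constraint you have derived, and in that configuration your candidate triple $(\mu,a_i,a_j)$ yields only the single guaranteed median $b_1$: each $b_m$ with $m\neq 1$ satisfies $d_G(\mu,b_m)+d_G(b_m,a_i)=\ell+2>\ell=d_G(\mu,a_i)$, so it lies on no shortest $(\mu,a_i)$-path, and no contradiction follows from this triple.

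The repair is a case distinction, which is what the paper does (with the roles of the two sides swapped in its notation). If some $b_m$ with $m\neq 1$ also has $d_G(\mu,b_m)=\ell-1$, then $b_1,b_m\in I_G(\mu,a_1)\cap I_G(\mu,a_2)\cap I_G(a_1,a_2)$ (using $d_G(a_1,a_2)=2$, which holds since the $K_{3,3}$ is induced), so the triple $(\mu,a_1,a_2)$ violates median-consistency, as you intended. Otherwise $d_G(\mu,b_2)=d_G(\mu,b_3)=\ell+1$, and you must switch triples: each $a_i$ now lies on a shortest $(\mu,b_2)$-path and a shortest $(\mu,b_3)$-path (since $d_G(\mu,a_i)+1=\ell+1$) and on a shortest $(b_2,b_3)$-path (since $d_G(b_2,b_3)=2$), so $a_1,a_2\in I_G(\mu,b_2,b_3)$ and the triple $(\mu,b_2,b_3)$ gives the contradiction. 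With this second case added your argument becomes essentially the paper's proof; as written, it breaks on a configuration that your distance analysis cannot rule out.
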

\begin{proof}
 Let $G=(V,E)$ be a $k$-median graph and $\mu$ be a \mc vertex in $G$. Assume,
 for contradiction, $G$ contains an induced $K_{3,3}\subseteq G$ whose vertex
 set is partitioned into $X =\{x_1,x_2,x_3\}$ and $Y=\{y_1,y_2,y_3\}$ and all
 edges of this $K_{3,3}$ are of the form $\{x_i,y_j\}$, $1\leq i,j\leq 3$. 
	
 Let $v\in X\cup Y$ be one of the vertices that is closest to $\mu$ in $G$,
 i.e., $d_G(\mu,v) = \min_{w\in X\cup Y} d_G(\mu,w)=K\geq 0$. Since the
 $K_{3,3}$ is induced, we can assume w.l.o.g.\ that $v = x_1$. Note that
 $x_1,x_2,y_1,y_2$ induce a $C_4$. 
 Now, we can apply Lemma \ref{lem:cndist} and conclude that $d_G(\mu,
 y_{1})=d_G(\mu, y_{2}) = K+1$ and $d_G(\mu,x_2) \in \{K,K+2\}$.	
	
 Assume, for contradiction, that $d_G(\mu,x_2) = K$. Note that $x_1,x_2\in
 I_G(y_1,y_2)$ since the $K_{3,3}$ is induced. Moreover, one can extend any
 shortest path from $\mu$ to $x_1$ (which has length $K$) by adding the edge
 $\{x_1,y_1\}$ to obtain a shortest path from $\mu$ to $y_1$ of length $K+1$ and
 thus, $x_1\in I_G(\mu,y_1)$. By similar arguments, we obtain $x_2\in
 I_G(\mu,y_1)$ as well as $x_1,x_2\in I_G(\mu,y_2)$. In summary, $x_1,x_2\in
 I_G(\mu,y_1)\cap I_G(\mu,y_2) \cap I_G(y_1,y_2)$. Hence, $\med(\mu,y_1,y_2)$ is
 not well-defined; a contradiction. 

 Thus, $d_G(\mu,x_2) = K+2$ must hold and, by analogous arguments, $d_G(\mu,x_3)
 = K+2$. By similar arguments as before and since $d_G(\mu, y_{1})=d_G(\mu,
 y_{2}) = K+1$, we can conclude that $y_1,y_2\in I_G(\mu,x_2)\cap I_G(\mu,x_3)
 \cap I_G(x_2,x_3)$. Hence, $\med(\mu,x_2,x_3)$ is not well-defined; a
 contradiction. 
\end{proof}

All median graphs are bipartite and meshed \cite{SM:99}. Bipartite meshed
graphs are precisely the modular graphs \cite[Lemma 2.8]{CCHO:20}. However,
since the three non-\mc vertices $u,v,w$ in a $Q_3^-$ satisfy
$I_G(u,v,w)=\emptyset$, it follows that a $Q_3^-$ is not modular and therefore,
not meshed. Moreover, since a $Q_3^-$ is a $4$-median graph, there are
$k$-median graphs that are not meshed. In the upcoming proofs it will,
therefore, be convenient to use a generalization of meshed graphs defined as
follows. 
\begin{definition}[$\mu$-meshed]
A graph $G$ is \emph{$\mu$-meshed} if 
$\mu \in V(G)$ and all distance-$\ell$-static quartets $(x,z,y,\mu)\in V^4$
satisfy the quadrangle property.
\end{definition}

As we shall see next, 
distance-$\ell$-static quadruples $(x,z,y,\mu)$ of $G$ can be
characterized in terms of induced $C_4$s containing $x,y$ and $z$ provided that
$\mu$ is a \mc vertex in $G$. As a consequence, $k$-median graphs are
$\mu$-meshed for all \mc vertices $\mu$. This, in turn, can be used
to show that every edge of a cycle in a $k$-median graphs
is also part of an induced $C_4$. 
The following lemma provides a characterization of distance-$\ell$-static quartets
in $k$-median graphs. Interestingly, this results shows that, in every
$C_4$ of a $k$-median graph, two opposite vertices have both distance
$\ell_{\mu}+1$ while the other two opposite vertices have distance $\ell_{\mu}$, resp., $\ell_{\mu}+2$
to any \mc vertex $\mu$ where $\ell_{\mu}$ is the smallest distance to $\mu$
among the vertices in this  $C_4$.

\begin{lemma}\label{lem:c4claim}
 Let $G$ be a $k$-median graph and $\mu$ be some \mc vertex in $G$. Then, the
 following two statements are equivalent. 
 \begin{enumerate}[noitemsep]
 	\item The quartet $(x,z,y,\mu)\in V^4$ is distance-$\ell$-static. 
 	\item There is an induced $C_4$ in $G$ with vertices $x,z,y,u$ and edges
 	      $\{x,y\}, \{y,z\}, \{z,u\}, \{u,x\}$ where $u$ is a vertex that is
 	      closest to $\mu$ among the vertices $x,y,z,u$.
 \end{enumerate} 
 In particular, every $k$-median graph $G$ is \emph{$\mu$-meshed} for every \mc
 vertex $\mu$ of $G$.
\end{lemma}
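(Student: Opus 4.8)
The plan is to prove the two implications separately, in both cases exploiting two facts about a \mc vertex $\mu$: first, Lemma~\ref{lem:adjdisttomedian}, which forces $|d_G(\mu,a)-d_G(\mu,b)|=1$ across every edge $\{a,b\}$; and second, that $\med(\mu,x,z)$ is well-defined, i.e.\ $|I_G(\mu,x)\cap I_G(\mu,z)\cap I_G(x,z)|=1$. Throughout I will use that whenever $d_G(x,z)=2$ the interval $I_G(x,z)$ consists exactly of $x$, $z$, and the common neighbours of $x$ and $z$.

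For $(1)\Rightarrow(2)$, suppose $(x,z,y,\mu)$ is distance-$\ell$-static, so $d_G(\mu,x)=d_G(\mu,z)=\ell$, $d_G(\mu,y)=\ell+1$, and $y$ is a common neighbour of $x$ and $z$ with $d_G(x,z)=2$. I would examine $m\coloneqq\med(\mu,x,z)\in I_G(x,z)$. The cases $m=x$ and $m=z$ are ruled out immediately: e.g.\ $m=x\in I_G(\mu,z)$ would force $\ell+2=d_G(\mu,x)+d_G(x,z)=d_G(\mu,z)=\ell$. Hence $m$ is a common neighbour of $x$ and $z$, and $m\in I_G(\mu,x)$ then gives $d_G(\mu,m)=\ell-1$. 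Setting $u\coloneqq m$, the vertices $x,y,z,u$ carry the edges $\{x,y\},\{y,z\},\{z,u\},\{u,x\}$, while $d_G(x,z)=2$ excludes the chord $\{x,z\}$ and Lemma~\ref{lem:01-diff-adge} excludes the chord $\{y,u\}$ because $|d_G(\mu,y)-d_G(\mu,u)|=|(\ell+1)-(\ell-1)|=2$; this simultaneously shows $u\neq y$ and, since $d_G(\mu,u)=\ell-1<\ell<\ell+1$, that $u$ is the unique vertex of this $C_4$ closest to $\mu$. Thus the required induced $C_4$ is obtained.

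For $(2)\Rightarrow(1)$, start from the induced $C_4$ on $x,y,z,u$ with $u$ closest to $\mu$, and put $\ell\coloneqq d_G(\mu,u)+1$. Applying Lemma~\ref{lem:adjdisttomedian} to the edges $\{u,x\}$ and $\{u,z\}$, and using that $u$ is closest, forces $d_G(\mu,x)=d_G(\mu,z)=\ell$. Applying it again to $\{x,y\}$ yields $d_G(\mu,y)\in\{\ell-1,\ell+1\}$, so the only thing left is to rule out $d_G(\mu,y)=\ell-1$. Here is the crux: if $d_G(\mu,y)=\ell-1$, then a direct distance count shows that both $u$ and $y$ lie in $I_G(\mu,x)\cap I_G(\mu,z)\cap I_G(x,z)$ (each is a common neighbour of $x,z$ at distance $\ell-1$ from $\mu$), so $|I_G(\mu,x,z)|\geq2$, contradicting that $\mu$ is \mc. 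Hence $d_G(\mu,y)=\ell+1$, and together with $d_G(x,z)=2$ (the non-adjacent pair of the induced $C_4$) this is exactly the assertion that $(x,z,y,\mu)$ is distance-$\ell$-static.

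Finally, the ``in particular'' clause is an immediate by-product of $(1)\Rightarrow(2)$: the vertex $u=\med(\mu,x,z)$ produced there is a common neighbour of $x$ and $z$ with $d_G(\mu,u)=\ell-1$, which is precisely the quadrangle property for the quartet $(x,z,y,\mu)$; as this holds for every distance-$\ell$-static quartet with last coordinate $\mu$, the graph is $\mu$-meshed. The only genuinely delicate point is the translation between the two viewpoints---reading the uniqueness of $\med(\mu,x,z)$ as the statement that its sole common-neighbour witness has distance $\ell-1$, and conversely reading the induced-$C_4$ and ``closest vertex'' hypotheses as pinning down all four distances to $\mu$---while the non-edge $\{y,u\}$ and the distinctness of the four vertices must be checked via the distance gaps furnished by Lemmas~\ref{lem:01-diff-adge} and~\ref{lem:adjdisttomedian}.
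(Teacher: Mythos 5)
Your proposal is correct and follows essentially the same route as the paper's proof: both directions hinge on taking $u=\med_G(\mu,x,z)$, ruling out $u\in\{x,z\}$ by a distance count, deducing $d_G(\mu,u)=\ell-1$ and excluding the chords $\{x,z\}$ and $\{y,u\}$ via Lemmas \ref{lem:01-diff-adge} and \ref{lem:adjdisttomedian}, while the converse direction derives the same contradiction from two common neighbours $u,y$ lying in $I_G(\mu,x)\cap I_G(\mu,z)\cap I_G(x,z)$. The only cosmetic difference is that you obtain the $\mu$-meshed claim as a by-product after building the induced $C_4$, whereas the paper notes the quadrangle property before finishing the $C_4$ argument.
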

\begin{proof} 
 Suppose that $\mu$ is a \mc vertex in the $k$-median graph $G$. Assume first
 that the quartet $(x,z,y,\mu)\in V^4$ is distance-$\ell$-static. Hence, for
 some $\ell\geq 1$, we have $d_G(x, \mu) = d_G(z, \mu) = \ell = d_G(y, \mu) - 1$
 and $d_G(x, z) = 2$ with $y$ a common neighbor of $x$ and $z$. This, in
 particular, implies that $x,y,z,\mu$ are pairwise distinct and
 $\{x,y\},\{y,z\}\subseteq E(G)$ and $\{x,z\}\notin E(G)$. Since $\mu$ is a \mc
 vertex in $G$, $\med_G(\mu,x,y)$ and $\med_G(\mu,x,z)$ are well-defined. Put
 $u\coloneqq \med_G(\mu,x,z)$. We claim that $u\neq x$ and $u\neq z$. Assume,
 for contradiction, that $u=x$. Hence, $x$ lies on a shortest between $\mu$ and
 $z$ and, in particular, $d_G(\mu,z)=d_G(\mu,x)+d_G(x,z) > \ell$; a
 contradiction. Hence, $u\neq x$. By similar arguments, $u\neq z$ must hold.
 Since $d_G(x, z) = 2$ and $u = \med_G(\mu,x,z)$, we have by definition of
 medians that $d_G(x,z)=d_G(x,u)+d_G(u,z)=2$. This together with $u\neq x,z$
 implies that $\{u,x\},\{u,z\}\in E(G)$. Furthermore, $u = \med_G(\mu,x,z)$
 implies $d_G(\mu, x)=d_G(\mu, u)+d_G(u,x)$. Since $d_G(\mu, x)=\ell$ and
 $d_G(u,x)=1$ we obtain $d_G(\mu,u)=\ell-1$. Since $d_G(\mu,u)=\ell-1$ and
 $d_G(y,\mu)=\ell+1$ we have $u\neq y$. Hence, $(x,z,y,\mu)$ satisfies the
 quadrangle property. As the latter arguments hold for any
 distance-$\ell$-static quadruple $(x,z,y,\mu)$, $G$ is $\mu$-meshed for all \mc
 vertices $\mu$ in $G$. Moreover, Lemma \ref{lem:01-diff-adge} implies that $u$
 and $y$ cannot be adjacent. Now, $\{x,y\},\{y,z\}, \{x,u\},\{z,u\}\in E(G)$ but
 $\{x,z\}, \{u,y\}\notin E(G)$ implies that $x,y,z,u$ induce a $C_4$.

 Assume now that there is an induced $C_4$ in $G$ with vertices $x,z,y,u$ and
 edges $\{x,y\}, \{y,z\}, \{z,u\}, \{u,x\}$ and suppose, that $u$ is closest to
 $\mu $ in $G$, i.e., $d_G(\mu,u) = \min_{v\in \{x,y,z,u\}} d_G(v,\mu)$. Suppose
 that $d_G(\mu,u)=\ell-1$ for some $\ell\geq 1$. This and the fact that
 $\{z,u\}$ and $\{u,x\}$ are edges in $G$ implies together with Lemma
 \ref{lem:adjdisttomedian} that $d_G(z,\mu), d_G(x,\mu)\in \{\ell,\ell-2\}$.
 Since, however, $d_G(\mu,u)=\ell-1$ and $u$ is closest to $\mu$ it must hold
 that $d_G(z,\mu) = d_G(x,\mu) =\ell$. Furthermore, $\{x,y\}\in E$ and
 $d_G(x,\mu) =\ell$ together with Lemma \ref{lem:adjdisttomedian} implies that
 $d_G(y,\mu)\in \{\ell-1,\ell+1\}$. Assume, for contradiction, that $d_G(y,\mu)
 =\ell-1$. In this case, $d_G(x,\mu) =\ell$ and $\{u,x\}\in E$ implies that
 there is a shortest $(\mu,x)$-path containing $u$ and thus, $u\in I_G(\mu,x)$.
 By similar arguments, $y\in I_G(\mu,x)$ as well as $u,y\in I_G(\mu,z)$ must
 hold. Moreover, since the $C_4$ in $G$ is induced by the vertices $x,z,y,u$, we
 have $d_G(x,z)=2$ and, by definition of the edges in this $C_4$, we have
 $u,y\in I_G(x,z)$. But then $u,y \in I_G(\mu,x)\cap I_G(\mu,z)\cap I_G(x,z)$,
 which implies that $\med_G(\mu,x,z)$ is not well-defined; a contradiction to
 $\mu$ being a \mc vertex in $G$. Hence, $d_G(y,\mu) =\ell+1$ must hold. It is
 now easy to verify that $(x,z,y,\mu)\in V^4$ is distance-$\ell$-static. 
\end{proof}

\begin{lemma}
Let $G$ be a $k$-median graph, $\mu$ be a \mc vertex and $C$ be a cycle  in $G$. 
Then, there are edges $\{x,y\}, \{y,z\}\in E(C)$ such that 
$(x,z,y,\mu)$ is distance-$\ell$-static for some $\ell\geq 1$. 
\label{lem:C-lstat}
\end{lemma}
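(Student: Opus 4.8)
The plan is to follow the distance to $\mu$ as one traverses $C$ and to read off the desired quartet at a farthest point of $C$ from $\mu$. Write the vertices of $C$ as a cyclic sequence $v_0,v_1,\dots,v_{n-1}$ with $\{v_j,v_{j+1}\}\in E(C)$ for all $j$ (indices taken modulo $n$). By Proposition~\ref{prop:bipthm} the graph $G$ is bipartite, so $n=|V(C)|\geq 4$. Since $\mu$ is a \mc vertex, Lemma~\ref{lem:adjdisttomedian} yields $|d_G(\mu,v_j)-d_G(\mu,v_{j+1})|=1$ for every $j$; that is, the integer sequence $d_G(\mu,v_0),\dots,d_G(\mu,v_{n-1})$ is a closed walk with steps $\pm 1$.

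I would then choose $y\coloneqq v_i$ to be a vertex of $C$ at maximum distance $M\coloneqq d_G(\mu,y)$ from $\mu$, and set $x\coloneqq v_{i-1}$ and $z\coloneqq v_{i+1}$. As $d_G(\mu,x),d_G(\mu,z)\leq M$ while each differs from $M$ by exactly $1$, we obtain $d_G(\mu,x)=d_G(\mu,z)=M-1$; put $\ell\coloneqq M-1$. The edges $\{x,y\}$ and $\{y,z\}$ lie on $C$ and $y$ is a common neighbour of $x$ and $z$, and once $\ell\geq 1$ is known the distances $\ell,\ell,\ell+1$ are all positive, so $x,y,z$ and $\mu$ are automatically pairwise distinct. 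Thus it remains only to verify the two numerical requirements of the definition: $\ell\geq 1$ and $d_G(x,z)=2$.

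The condition $d_G(x,z)=2$ is the easy part. The vertices $x$ and $z$ are distinct, being the two distinct $C$-neighbours of the degree-$2$ vertex $y$ on a cycle of length $n\geq 4$, and they share the common neighbour $y$, so $d_G(x,z)\leq 2$; they cannot be adjacent, since $d_G(\mu,x)=d_G(\mu,z)$ would contradict Lemma~\ref{lem:adjdisttomedian}. Hence $d_G(x,z)=2$.

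The only step requiring a genuine argument --- and the step I expect to be the main (if modest) obstacle --- is $\ell\geq 1$, i.e.\ $M\geq 2$. Here I would argue by contradiction: if $M\leq 1$, then every vertex of $C$ lies at distance $0$ or $1$ from $\mu$. A vertex at distance $0$ equals $\mu$, which occurs at most once on the simple cycle $C$, so (using $n\geq 4$) some vertex $w\in V(C)$ lies at distance exactly $1$. By the $\pm 1$ step property its two $C$-neighbours then lie at distance $0$ or $2$ from $\mu$, hence both at distance $0$ (as $M\leq 1$), hence both equal to $\mu$ --- impossible, since the two neighbours of $w$ on a cycle of length $n\geq 4$ are distinct. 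Therefore $M\geq 2$, so $\ell=M-1\geq 1$, and $(x,z,y,\mu)$ is distance-$\ell$-static with $\{x,y\},\{y,z\}\in E(C)$ as claimed. Note that bipartiteness and Lemma~\ref{lem:adjdisttomedian} are exactly what is needed to rule out the degenerate situation in which the whole cycle sits within distance one of $\mu$.
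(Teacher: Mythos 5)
Your proof is correct, and it takes a genuinely different route from the paper's. The paper proceeds by contradiction: if no two consecutive edges of $C$ give a distance-$\ell$-static quartet, then Lemma \ref{lem:adjdisttomedian} forces $d_G(\mu,\cdot)$ to increase strictly at every step around the cycle, yielding $d_G(\mu,y_{|C|})-d_G(\mu,y_1)=|C|-1$, which contradicts the adjacency of $y_{|C|}$ and $y_1$. You argue directly by an extremal choice: a vertex $y$ of $C$ farthest from $\mu$ has both of its cycle-neighbours at distance $M-1$, and the only substantive point is excluding $M\leq 1$, which your $\pm 1$-step argument settles. Both proofs rest on exactly the same ingredients (Prop.\ \ref{prop:bipthm} for bipartiteness and the exact-difference property of Lemma \ref{lem:adjdisttomedian}), so the difference is one of organization rather than of tools. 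Your version is shorter, pins down where the quartet can be found (at any vertex of $C$ farthest from $\mu$), and treats the degenerate situation near $\mu$ explicitly; the paper's sweep around the cycle, in exchange, yields the stronger intermediate observation that absent such a quartet the distances would have to grow monotonically along the entire cycle, though it leaves its own degenerate case implicit (when $k=0$, the distance cannot drop back not because of a forbidden quartet -- which would have $\ell=0$ -- but because $y_3\neq y_1=\mu$ on a simple cycle, a point your argument handles cleanly).
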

\begin{proof}
 Assume, for contradiction, that $(x,z,y,\mu)$ is not distance-$\ell$-static for
 all $\{x,y\}, \{y,z\}\in E(C)$ and any $\ell\geq 1$. Note that $G$ bipartite,
 so $d_G(x,z)=2$ for all $\{x,y\}, \{y,z\}\in E(C)$. Let the vertices of
 $C$ be labeled such that $y_1,\dots,y_{|C|}$ and $\{y_1,y_2\},
 \{y_2,y_3\},\dots, \{y_{|C|},y_1\}\in E(C)$. By Lemma
 \ref{lem:adjdisttomedian}, $|d_G(\mu,y_1)-d_G(\mu,y_2)|=1$ and we can assume
 w.l.o.g.\ that $d_G(\mu,y_1)=k$ and $d_G(\mu,y_2)=k+1$ for some $k\geq 0$.
 Again, by Lemma \ref{lem:adjdisttomedian}, $|d_G(\mu,y_2)-d_G(\mu,y_3)|=1$ and
 so $d_G(\mu,y_2)\in \{k,k+2\}$. Since there is no distance-$\ell$-static
 quadruple using three consecutive vertices in $C$, $d_G(\mu,y_3) = k+2$ must
 hold. Repeating the latter arguments, yields $d_G(\mu,y_{|C|}) = k+|C|-1$.
 Since $\{y_1,y_{|C|}\}\in E(G)$ and by Lemma \ref{lem:adjdisttomedian}, $1 =
 d_G(\mu,y_{|C|})-d_G(\mu,y_1)=k+|C|-1 - k = |C|-1$ which is only possible if
 $|C|=2$; a contradiction. 
\end{proof}

\begin{corollary}
A $k$-median graph cannot contain convex cycles $C_n$, $n>4$.
\end{corollary}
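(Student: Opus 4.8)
The plan is to combine the two preceding lemmas with the defining property of convex subgraphs. Fix a \mc vertex $\mu$ of the $k$-median graph $G$ (one exists by definition) and suppose, for contradiction, that $G$ contains a convex cycle $C$ with $|C| = n > 4$. Since $C$ is in particular a cycle in $G$, Lemma \ref{lem:C-lstat} supplies two edges $\{x,y\}, \{y,z\} \in E(C)$ for which the quartet $(x,z,y,\mu)$ is distance-$\ell$-static for some $\ell \geq 1$; in particular $d_G(x,z) = 2$ and $x,y,z \in V(C)$ with $y$ a common neighbour of $x$ and $z$ inside $C$.

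Next I would invoke Lemma \ref{lem:c4claim}: from the distance-$\ell$-static quartet $(x,z,y,\mu)$ one obtains an induced $C_4$ on the pairwise distinct vertices $x,z,y,u$ with edges $\{x,y\}, \{y,z\}, \{z,u\}, \{u,x\}$. Because $d_G(x,z) = 2$, both the two-edge path through $y$ and the two-edge path through $u$ are shortest $(x,z)$-paths. The endpoints $x,z$ lie in $V(C)$, so convexity of $C$ forces every shortest $(x,z)$-path to belong to $C$; applying this to the path through $u$ yields $u \in V(C)$ together with $\{x,u\}, \{u,z\} \in E(C)$.

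Finally I would derive the contradiction from $2$-regularity of the cycle. The vertex $x$ now has the two neighbours $y$ and $u$ inside $C$ (and $y \neq u$ since the $C_4$ of Lemma \ref{lem:c4claim} is induced), so these are exactly its two neighbours in $C$; the same holds for $z$. Consequently the four distinct edges $\{x,y\}, \{y,z\}, \{z,u\}, \{u,x\}$ all lie in $E(C)$ and form a closed $4$-cycle, whence the connected $2$-regular graph $C$ must equal this $4$-cycle, i.e.\ $n = 4$, contradicting $n > 4$. The only step that needs care is the middle one: one must ensure that the second shortest $(x,z)$-path through $u$ is genuinely distinct from the path through $y$ and is therefore forced into $C$ as a \emph{new} edge pair, which is exactly what $u \neq y$ guarantees. (It is also worth observing at the outset that a convex cycle is necessarily chordless, hence induced, which clarifies the picture even though the argument above does not strictly require it.)
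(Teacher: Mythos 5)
Your proof is correct and takes essentially the same route as the paper's: both fix a \mc vertex $\mu$, invoke Lemma \ref{lem:C-lstat} to obtain a distance-$\ell$-static quartet $(x,z,y,\mu)$ on the cycle, and then use Lemma \ref{lem:c4claim} to produce a second shortest $(x,z)$-path through a new vertex $u$, which convexity of $C$ would force into the cycle. The only difference is cosmetic: the paper simply observes that a cycle of length $n>4$ can contain only one of the two shortest $(x,z)$-paths, whereas you spell out the same fact via the $2$-regularity of $C$.
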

\begin{proof}
Let $C$ be a cycle in $G$ with $n>4$ vertices. Lemma \ref{lem:C-lstat}
implies that there are edges $\{x,y\}, \{y,z\}\in E(C)$ such that 
$(x,z,y,\mu)$ is distance-$\ell$-static for some $\ell\geq 1$. 
By Lemma \ref{lem:c4claim}, there is an induced $C_4$ in $G$ with edges
$\{z,u\}, \{u,x\}$. Hence, there are two shortest $(x,z)$-paths. 
Since $|C|>4$, only one them is located on $C$. Thus, $C$ is not a convex subgraph of $G$. 
\end{proof}

\begin{proposition}\label{prop:cycle-C4}
 Let $G$ be a $k$-median graph. Then, every edge $e\in E(C)$ of every cycle $C$
 of $G$ is an edge of an induced $C_4$. 
\end{proposition}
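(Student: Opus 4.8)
The plan is to fix a \mc vertex $\mu$ (which exists since $G$ is a $k$-median graph) and to observe that the conclusion ``$e$ lies on an induced $C_4$'' depends only on the edge $e$ and on $G$, not on the particular cycle $C$; hence it suffices to prove that every edge lying on \emph{some} cycle lies on an induced $C_4$. Write $e=\{a,b\}$. By Lemma~\ref{lem:adjdisttomedian} the endpoints satisfy $|d_G(\mu,a)-d_G(\mu,b)|=1$, so I may orient $e$ so that $d_G(\mu,a)=p$ and $d_G(\mu,b)=p+1$. The key observation is that a second neighbour $z\neq a$ of $b$ with $d_G(\mu,z)=p$ makes $(a,z,b,\mu)$ distance-$p$-static: indeed $d_G(a,z)=2$ because $a,z$ are distinct non-adjacent vertices of equal distance to $\mu$ sharing the neighbour $b$ (non-adjacency uses bipartiteness, Prop.~\ref{prop:bipthm}). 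Lemma~\ref{lem:c4claim} then yields an induced $C_4$ with edges $\{a,b\},\{b,z\},\{z,u\},\{u,a\}$, which contains $e$. Thus the whole task reduces to \emph{locating}, via the cycle, such a second ``down-neighbour'' of the higher endpoint $b$.

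To exploit the cycle I would study the height profile $v\mapsto d_G(\mu,v)$ along $C$, which by Lemma~\ref{lem:adjdisttomedian} changes by exactly $\pm 1$ on each edge of $C$ and returns to its starting value. If $b$ is a local maximum of this profile on $C$, then its two $C$-neighbours $a$ and $b'$ both have height $p$ and the previous paragraph applies with $z=b'$, finishing immediately. The difficulty is the opposite case, where the profile increases through $b$ (its other $C$-neighbour $b'$ has height $p+2$), so that no second down-neighbour of $b$ is visible on $C$ itself.

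For this case I would run a minimal-counterexample argument: assuming $e$ lies on no induced $C_4$, choose among all cycles through $e$ one, $C$, of minimum length and, among those, of minimum total height $\Phi(C)\coloneqq\sum_{v\in V(C)}d_G(\mu,v)$. Taking the globally highest vertex $y$ on the arc of $C$ leaving $b$ away from $a$ gives a local maximum whose two $C$-neighbours $s,t$ have height $d_G(\mu,y)-1\ge p+1$ and are therefore both distinct from $a$; applying Lemma~\ref{lem:c4claim} to the distance-static quartet $(s,t,y,\mu)$ produces $u=\med_G(\mu,s,t)$ at height $d_G(\mu,y)-2$ together with an induced $C_4$ on $s,t,y,u$. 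If this $C_4$ already contains $e$ we are done. Otherwise I would replace the detour $s\text{--}y\text{--}t$ in $C$ by $s\text{--}u\text{--}t$: when $u\notin V(C)$ this yields a cycle through $e$ of the same length but with $\Phi$ smaller by $2$, contradicting minimality; when $u\in V(C)$ the edges $\{u,s\},\{u,t\}$ act as chords, and I would extract the strictly shorter sub-cycle through $e$ by using whichever chord lies on the side of the arc containing $e$, again contradicting minimality.

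The step I expect to be the main obstacle is exactly the case $u\in V(C)$: one must check that the shortcut is a genuine simple cycle of length $\ge 4$ still passing through $e$ and \emph{strictly} shorter than $C$. The only degenerate situations — where the shortcut would reuse an edge of the just-found $C_4$ — occur precisely when $e$ is one of $\{u,s\}$ or $\{u,t\}$, i.e.\ $e\in E(C_4)$, and these are already handled by the ``$C_4$ contains $e$'' branch. One must also confirm that the rerouting never disturbs $e$ (the chosen $y$ satisfies $d_G(\mu,y)\ge p+2$, so $y\notin\{a,b\}$ and its neighbours are not $a$) and that a length-$4$ cycle through $e$ is automatically induced, which holds because $G$ is bipartite (Prop.~\ref{prop:bipthm}) and hence triangle-free, so every $4$-cycle is chordless.
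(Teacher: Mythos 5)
Your proof is correct, but it descends by a different mechanism than the paper's. The paper also starts from a shortest cycle $C$ through $e$ (hence induced, and assumed of length $>4$), but it then walks from $e$ to the \emph{first} peak of the distance profile along $C$ and ``ladders'' back down toward $e$: repeated applications of Lemma~\ref{lem:c4claim} produce a chain of induced $C_4$s hugging the cycle (with vertices $u,u',u'',\dots$ two levels below), and each step requires a separate argument that the new vertex does not lie on $C$ (otherwise a shorter cycle through $e$ contradicts minimality), until an induced $C_4$ containing $e$ itself is reached; the existence of a peak is supplied by Lemma~\ref{lem:C-lstat}. You instead minimize twice --- first the length, then the potential $\Phi(C)=\sum_{v\in V(C)}d_G(\mu,v)$ --- and cut off the \emph{global} peak $y$: Lemma~\ref{lem:c4claim} yields $u$ two levels down, and either rerouting through $u$ lowers $\Phi$ at equal length ($u\notin V(C)$), or the chords $\{u,s\},\{u,t\}$ give a strictly shorter cycle through $e$ ($u\in V(C)$), or the new $C_4$ already contains $e$; your degenerate-case check ($e\in\{\{u,s\},\{u,t\}\}$ is exactly subcase A, and bipartiteness via Prop.~\ref{prop:bipthm} handles adjacency and inducedness issues) is the right one, and your observation that the global maximum is automatically a peak lets you bypass Lemma~\ref{lem:C-lstat} entirely. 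What the two approaches buy: your potential-function descent is shorter and avoids the paper's delicate inductive bookkeeping (the sets $V_j$ and the subcases $u'=v_0$ versus $u'=v_j$, each resolved by constructing ad hoc shorter cycles), while the paper's ladder argument extracts more structure --- it shows that \emph{every} edge of $C$ between $e$ and the first peak lies on an induced $C_4$, and that these $C_4$s form a descending chain, which is the picture the authors return to in their outlook on reconstructing $k$-median graphs from distance-$\ell$-static quartets along cycles. Your preliminary reduction (any second ``down-neighbour'' $z$ of the higher endpoint $b$ makes $(a,z,b,\mu)$ distance-static and finishes immediately) is also a clean addition not isolated in the paper.
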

\begin{proof}
 Let $G=(V,E)$ be a $k$-median graph and $\mu$ be a \mc vertex in $G$. Assume
 that $e = \{x,y\}$ is an edge of some cycle in $G$. Among all cycles that
 contain $e$, let $C$ be one of the cycles of smallest length. Hence $C$ must be
 induced. Assume for contradiction, that $|C|>4$. Thus, $|C|\geq 6$ since $G$ is
 bipartite. By Lemma \ref{lem:adjdisttomedian}, $|d_G(\mu,x)-d_G(\mu,y)|=1$ and
 we may assume w.l.o.g.\ that $d_G(\mu,x) < d_G(\mu,y)$. Let $\{y,z\}\in E(C)$
 be the edge in $C$ that is incident to $y$ and where $z\neq x$. If
 $(x,z,y,\mu)$ is distance-$\ell$-static, then by Lemma \ref{lem:c4claim}, the
 vertices $x,y,z$ and thus, the edge $\{x,y\}$ is contained in a common induced
 $C_4$. Hence, suppose that $(x,z,y,\mu)$ is not distance-$\ell$-static. This
 toget-her with $x\neq z$, $d_G(\mu,x) < d_G(\mu,y)$ and Lemma
 \ref{lem:adjdisttomedian} implies that $d_G(\mu,x) < d_G(\mu,y) < d_G(\mu,z)$.
 Now, label the vertices in $C$ with $v_0, \dots, v_{|C|-1}$ such that
 $v_0\coloneqq x$, $v_1\coloneqq y$, $v_2\coloneqq z$ and $E(C)= \{\{v_0,v_1\},
 \{v_1,v_2\},\dots, \{v_{|C|-1},v_0\} \}$. In the following, we denote with
 $V_j\coloneqq \{v_0,\dots,v_j\}$ the set of vertices of $C$ up to the index
 $j$. By Lemma \ref{lem:C-lstat}, $C$ contains a distance-$\ell$-static quartet
 $(a,c,b,\mu)$, i.e., in particular, $d_G(\mu,a) < d_G(\mu,b)$ and $d_G(\mu,b) >
 d_G(\mu,c)$. Hence, we can choose the smallest index $i\geq 1$ for which
 $d_G(\mu,v_i)=\ell < d_G(\mu,v_{i+1})$ and $d_G(\mu,v_{i+1}) >
 d_G(\mu,v_{i+2})$ is satisfied. Since $d_G(\mu,x) < d_G(\mu,y)$ and by choice
 of $i$, we have $d_G(\mu,v_{j})<d_G(\mu,v_{j+1})$ for all $j\in\{0,\dots,i\}$.
 Lemma \ref{lem:adjdisttomedian} implies that
 $d_G(\mu,v_i)=d_G(\mu,v_{i+2})=\ell$ and $d_G(\mu,v_{i+1})=\ell+1$. Since $v_i$
 and $v_{i+2}$ are distinct vertices, we have $\ell \geq 1$. Moreover, since $G$
 is bipartite, $d_G(v_{i},v_{i+2})=2$ must hold. In summary,
 $(v_{i},v_{i+2},v_{i+1},\mu)$ is distance-$\ell$-static. By Lemma
 \ref{lem:c4claim}, there exists an induced $C_4$ containing the edges $\{v_{i},
 v_{i+1}\}$ and $\{v_{i+1},v_{i+2}\}$ and an additional vertex $u$ such that
 $d_G(\mu,u) = \ell-1$ and, in particular, $\{u,v_{i}\}, \{u,v_{i+2}\}\in E(G)$.
 Note that if $u\in V(C)$, then $C$ is either not induced or $i=1$ and $u=v_0$
 must holds in which case $C\simeq C_4$. Hence, $u\notin V(C)$ and, in
 particular, $u\notin V_i$. By choice of $i$,
 $d_G(\mu,v_{i-1})<d_G(\mu,v_{i})=\ell$ which together with Lemma
 \ref{lem:adjdisttomedian} implies that $d_G(\mu,v_{i-1})= \ell-1$. Moreover,
 since $G$ is bipartite and $\{u,v_{i}\}, \{v_i,v_{i-1}\}\in E(G)$, we have
 $d_G(v_{i-1},u)=2$, Note that $\ell-1 \geq 1$ as otherwise, $d_G(\mu,u) =
 d_G(\mu,v_{i-1})= \ell-1 = 0$ would imply that $u=\mu=v_{i-1}$; a contradiction
 to $u$ and $v_{i-1}$ being distinct. In summary, $(v_{i-1},u,v_i,\mu)$ is
 distance-$(\ell-1)$-static. By Lemma \ref{lem:c4claim}, there is an induced
 $C_4$ containing the edges $\{v_{i-1}, v_{i}\}$ and $\{v_{i},u\}$ and an
 additional vertex $u'$ with $d_G(\mu,u')= \ell-2$ as well as the edges
 $\{v_{i-1},u'\}$ and $\{u,u'\}$. If $i=1$, then we found an induced $C_4$
 containing $\{v_{0}, v_{1}\} = \{x,y\}$; contradicting to the choice of $C$.
 Thus, $i\geq 2$ must hold. We claim that $u'\notin V_{i-2}$. To see this,
 assume, for contradiction, that $u' \in V_{i-2}$. Suppose first that $u'=v_0$
 and consider the cycle $C'$ traversing $u',v_1,\dots, v_{i},u,u'$ of length
 $|C'|=i+2$. If $|C|=i+2$, then $v_{i+2}=v_0$. Hence, $u'=v_0$ implies $v_{i+2}=
 u'$ and thus there is an edge $\{v_{i-1},u'\} = \{v_{i-1},v_{i+2}\} $ and $C$
 is not induced; a contradiction. Hence, $|C|>i+2$ must hold. But then $C'$ is
 shorter than $C$ and contains the edge $\{u',v_1\}=\{x,y\}$; a contradiction to
 the choice of $C$. Thus, $u'=v_j \in V_{i-2}\setminus\{ v_0\}$ must hold. Now
 consider the cycle $C''$ traversing $v_0,v_1,\dots,
 v_{j-1},u',u,v_{i+2},v_{i+3}\dots,v_{|C|-1},v_0$ ($v_1=u'$ or $v_1=v_{j-1}$ may
 possible). Note that $d_C(v_j,v_{i+2})\geq 4$ since $j\leq i-2$. Moreover, we
 have $d_G(v_j,v_{i+2})=2$ since there is a path with edges $\{u',u\} =
 \{v_j,u\},$ and $\{u, v_{i+2}\}$ in $G$. Hence, $C''$ is shorter than $C$ and
 contains $\{v_0,v_1\}=\{x,y\}$; again a contradiction. Hence, we found an
 induced $C_4$ with edges $\{v_{i-1}, v_{i}\}$, $\{v_{i},u\}$, $\{u,u'\}$,
 $\{u',v_{i-1}\}$ with $u\notin V_{i-1}$ and $u'\notin V_{i-2}$. Now, we can
 repeat the arguments on $u',v_{i-1},v_{i-2}$ and find an induced $C_4$ with
 edges $\{v_{i-2}, v_{i-1}\}$, $\{v_{i-1},u'\}$, $\{u',u''\}$, $\{u'',v_{i-2}\}$
 with $u'\notin V_{i-2},u''\notin V_{i-3}$. By induction and by repeating the
 latter arguments, one shows that all edges $\{v_{0}, v_{1}\} = \{x,y\}$,
 $\{v_{1}, v_{2}\}$, \dots, $\{v_{i}, v_{i+1}\}$ are located in some induced
 $C_4$; a contradiction to the choice $C$. Hence, any smallest cycle containing
 $\{x,y\}$ must be an induced $C_4$.
\end{proof}

\begin{lemma}\label{lem:2quartets->Q3MK23}
Let $G$ be a $k$-median graph with \mc vertex $\mu$. If $G$ contains two
distance-$\ell$-static quartets $(x,z,y,\mu), (x,z',y,\mu)\in V^4$, then $G$
contains an induced $K_{2,3}$ or $Q_3^-$. 
\end{lemma}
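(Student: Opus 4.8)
The plan is to apply Lemma~\ref{lem:c4claim} to each of the two quartets and then to split into two cases governed by a single adjacency condition. First I would record the relevant distances. Since both quartets are distance-$\ell$-static and share $x$ and $\mu$, the common value is $\ell=d_G(\mu,x)=d_G(\mu,z)=d_G(\mu,z')$, while $d_G(\mu,y)=\ell+1$; moreover $y$ is a common neighbour of $x$, $z$ and $z'$, and $d_G(x,z)=d_G(x,z')=2$. Because the two quartets are distinct they differ in the third coordinate, so $z\neq z'$, and $z\not\sim z'$ since both lie at distance $\ell$ from $\mu$ and $G$ is bipartite (Prop.~\ref{prop:bipthm}). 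Applying Lemma~\ref{lem:c4claim}(2) to $(x,z,y,\mu)$ produces a vertex $u$ lying on an induced $C_4$ with $u\sim x$, $u\sim z$, $u\not\sim y$ and $d_G(\mu,u)=\ell-1$, and applying it to $(x,z',y,\mu)$ produces $u'$ with $u'\sim x$, $u'\sim z'$, $u'\not\sim y$ and $d_G(\mu,u')=\ell-1$.

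For the first case, suppose $u\sim z'$ or $u'\sim z$ (this in particular covers $u=u'$, since then $u=u'\sim z'$). Then one of $u,u'$ is adjacent to all three of $x,z,z'$, and I would take that vertex together with $y$ as one side of a bipartition and $\{x,z,z'\}$ as the other. All six crossing edges are present, and the within-side pairs are non-edges: $u\not\sim y$ (resp.\ $u'\not\sim y$) from the induced $C_4$, the pairs $\{x,z\}$ and $\{x,z'\}$ because $d_G=2$, and $\{z,z'\}$ by bipartiteness. Hence these five vertices induce a $K_{2,3}$.

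For the remaining case $u\not\sim z'$ and $u'\not\sim z$, the plan is to exhibit a $Q_3^-$. Here $u\neq u'$ (otherwise $u\sim z'$), which forces $\ell\geq 2$ since $d_G(\mu,u)=\ell-1\geq 1$. Then $(u,u',x,\mu)$ is distance-$(\ell-1)$-static: $u,u'$ are at distance $\ell-1$ from $\mu$, $x$ is their common neighbour at distance $\ell$, and $d_G(u,u')=2$ by bipartiteness. Lemma~\ref{lem:c4claim} then yields a common neighbour $w$ of $u$ and $u'$ with $d_G(\mu,w)=\ell-2$. I would then check that $w,u,u',x,z,z',y$ are pairwise distinct---they occupy the $\mu$-distance levels $\ell-2,\ell-1,\ell-1,\ell,\ell,\ell,\ell+1$, with $u\neq u'$ and $z\neq z'$---and that the induced subgraph has exactly the nine edges $wu,wu',ux,uz,u'x,u'z',xy,zy,z'y$. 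The assignment $x\mapsto 000$, $u\mapsto 100$, $u'\mapsto 010$, $y\mapsto 001$, $w\mapsto 110$, $z\mapsto 101$, $z'\mapsto 011$ identifies this graph with $Q_3$ minus the vertex $111$, i.e.\ a $Q_3^-$.

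The main obstacle is the bookkeeping in this last case: certifying that the seven vertices are distinct and, above all, that no extra edge destroys the induced $Q_3^-$. Every pair whose $\mu$-distances differ by at least two, or which lies at a common level, is a non-edge by Lemma~\ref{lem:adjdisttomedian} and bipartiteness; among the pairs at adjacent levels, nine are exactly the edges furnished by the three $C_4$s, and the only two remaining ones, $\{u,z'\}$ and $\{u',z\}$, are non-edges precisely by the hypothesis of this case. Thus the single dichotomy ``$u\sim z'$ or $u'\sim z$'' is what makes both cases close: its positive side forces a $K_{2,3}$, and its negative side leaves precisely the edge set of a $Q_3^-$.
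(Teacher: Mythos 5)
Your proof is correct, and it follows the paper's skeleton for the first half: both arguments apply Lemma~\ref{lem:c4claim} to the two given quartets to obtain the vertices $u$ and $u'$ at distance $\ell-1$ from $\mu$, and both extract an induced $K_{2,3}$ on $\{u,y\}\cupdot\{x,z,z'\}$ (resp.\ $\{u',y\}\cupdot\{x,z,z'\}$) when the relevant coincidence or adjacency occurs. The $Q_3^-$ case, however, is built differently. The paper stays within the three levels $\ell-1,\ell,\ell+1$: it notes that $(z,z',y,\mu)$ is itself distance-$\ell$-static, extracts a third vertex $u''$ adjacent to $z$ and $z'$, and takes the seven vertices $x,z,z',y,u,u',u''$. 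You instead descend one level: from the distance-$(\ell-1)$-static quartet $(u,u',x,\mu)$ you obtain $w$ with $d_G(\mu,w)=\ell-2$ and take $w,u,u',x,z,z',y$, a configuration spanning four distance levels. Both constructions lean on the same key lemma, so neither is more elementary, but your case split on the adjacency dichotomy ``$u\sim z'$ or $u'\sim z$'' (rather than the paper's equality trichotomy $u=u'$, then $u''\in\{u,u'\}$, then all distinct) buys something concrete: in your negative case the only two level-adjacent pairs not supplied as edges by the three $C_4$s, namely $\{u,z'\}$ and $\{u',z\}$, are non-edges by hypothesis, so your level-by-level bookkeeping certifies inducedness of the $Q_3^-$ completely. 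In the paper's final case, pairwise distinctness of $u,u',u''$ alone does not exclude an extra edge such as $\{u,z'\}$ among the seven vertices (one would then have to fall back on the $K_{2,3}$ case), so your argument settles a detail the paper leaves implicit. One harmless slip: $z$ and $z'$ sit in the second coordinate of the quartets, not the third; distinctness of the two quartets still forces $z\neq z'$ exactly as you use it.
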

\begin{proof}
Let $\mu$ be a \mc vertex in $G=(V,E)$ and $(x,z,y,\mu), (x,z',y,\mu)\in V^4$ be
distance-$\ell$-static quartets. Hence, we have $d_G(x, \mu) = d_G(z, \mu) =
\ell = d_G(y, \mu) - 1$ and $d_G(x, z) = 2$ with $y$ a common neighbor of $x$
and $z$ as well as $d_G(x, \mu) = d_G(z', \mu) = \ell$ and $d_G(x, z') = 2$ with
$y$ a common neighbor of $x$ and $z'$. Thus, $y$ is a common neighbor of $z$
and $z'$. Since $G$ is bipartite, $d_G(z, z') = 2$. Moreover, $d_G(z,
\mu)=d_G(z', \mu) = \ell = d_G(y, \mu) - 1$. Thus, $(z,z',y,\mu)\in V^4$ is
distance-$\ell$-static. 

By Lemma \ref{lem:c4claim}, there is an induced $C_4$ in $G$ with edges
$\{x,y\}, \{y,z\}, \{z,u\}, \{u,x\}$ and an induced $C_4$ in $G$ with edges
$\{x,y\}, \{y,z'\}, \{z',u'\}, \{u',x\}$. If $u=u'$, then $G$ contains an
induced $K_{2,3}$ with bipartition $\{x,z\}\cupdot\{u,y,z'\}$. Hence, assume
that $u\neq u'$. Since $(z,z',y,\mu)\in V^4$ is distance-$\ell$-static, there is
an induced $C_4$ in $G$ with $\{z,y\}, \{y,z'\}, \{z',u''\}, \{u'',z\}$. Again
if $u''=u$ or $u''=u'$, there is an induced $K_{2,3}$. If $u,u',u''$ are
pairwise distinct, then $x,z,z',y,u,u',u''$ induce a $Q_3^-$.  
\end{proof}

\begin{proposition}
Let $G$ be a $Q_3^-$-free $k$-median graph with \mc vertex $\mu$. Then, $G$
contains an induced $K_{2,3}$ if and only if $G$ contains two
distance-$\ell$-static quartets $(x,z,y,\mu), (x,z',y,\mu)\in V^4$. 
\end{proposition}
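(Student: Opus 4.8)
The plan is to prove the two implications separately. The ``if'' direction is immediate from an earlier lemma, while the ``only if'' direction requires pinning down the distances from $\mu$ to the five vertices of the $K_{2,3}$ and reading off the two quartets from the resulting pattern.

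For the reverse implication, I would simply invoke Lemma~\ref{lem:2quartets->Q3MK23}: if $G$ contains two distance-$\ell$-static quartets $(x,z,y,\mu),(x,z',y,\mu)$, then $G$ contains an induced $K_{2,3}$ or an induced $Q_3^-$. Since $G$ is $Q_3^-$-free by hypothesis, the induced $K_{2,3}$ must occur, and nothing further is needed.

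For the forward implication, suppose $G$ contains an induced $K_{2,3}$ with bipartition $\{a,b\}\cupdot\{c,d,e\}$, where $a,b$ are the two degree-$3$ vertices (``hubs'') and $c,d,e$ the three degree-$2$ vertices (``spokes''), each spoke adjacent to both hubs. Since $G$ is bipartite (Prop.~\ref{prop:bipthm}), the spokes lie in one part and the hubs in the other, so $d_G(a,b)=2$ and $d_G(s,s')=2$ for distinct spokes $s,s'$. Applying Lemma~\ref{lem:adjdisttomedian} to the edge from each spoke $s$ to each hub gives $\{d_G(\mu,a),d_G(\mu,b)\}\subseteq\{d_G(\mu,s)-1,\,d_G(\mu,s)+1\}$ for every $s\in\{c,d,e\}$. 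I would then split on whether the two hubs are equidistant from $\mu$.

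The crux, and the step I expect to be the main obstacle, is ruling out the case $d_G(\mu,a)=d_G(\mu,b)=:\alpha$. Here each spoke lies at distance $\alpha-1$ or $\alpha+1$ from $\mu$, so by pigeonhole two of the three spokes, say $s_1,s_2$, share the same distance. If both are at distance $\alpha+1$, then extending a shortest $\mu$--$a$ path (resp.\ $\mu$--$b$ path) by one edge shows $a,b\in I_G(\mu,s_1)\cap I_G(\mu,s_2)$, while $s_1$--$a$--$s_2$ and $s_1$--$b$--$s_2$ give $a,b\in I_G(s_1,s_2)$; hence $\med_G(\mu,s_1,s_2)$ is not well-defined, contradicting that $\mu$ is \mc. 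If both are at distance $\alpha-1$, then symmetrically $s_1,s_2\in I_G(\mu,a)\cap I_G(\mu,b)\cap I_G(a,b)$, contradicting that $\med_G(\mu,a,b)$ is well-defined. So the hubs are not equidistant, say $d_G(\mu,a)<d_G(\mu,b)$; the containment above then forces $d_G(\mu,a)=d_G(\mu,s)-1$ and $d_G(\mu,b)=d_G(\mu,s)+1$ for every spoke $s$, so all three spokes share a common distance $\ell:=d_G(\mu,a)+1=d_G(\mu,b)-1\geq1$.

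From this forced pattern the required quartets read off directly. Taking $x:=c$, $z:=d$, $z':=e$ and $y:=b$, we have $d_G(\mu,c)=d_G(\mu,d)=d_G(\mu,e)=\ell=d_G(\mu,b)-1$, the vertex $b$ is a common neighbor of $c,d$ and of $c,e$, and $d_G(c,d)=d_G(c,e)=2$; hence both $(c,d,b,\mu)$ and $(c,e,b,\mu)$ are distance-$\ell$-static and have exactly the shape $(x,z,y,\mu),(x,z',y,\mu)$ demanded by the statement. I emphasize the pigeonhole-plus-median argument of the previous paragraph as the delicate point: had the equidistant configuration been allowed, the quartets one would naturally extract would share their first two coordinates rather than their first and third, and would thus fail to match the form in the proposition, so eliminating that configuration is precisely what makes the statement go through.
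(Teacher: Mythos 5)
Your proof is correct and follows essentially the same route as the paper: the ``if'' direction is the identical appeal to Lemma~\ref{lem:2quartets->Q3MK23} plus $Q_3^-$-freeness, and the ``only if'' direction pins down the same forced distance pattern (hubs at $\ell-1$ and $\ell+1$, all three spokes at $\ell$) using Lemma~\ref{lem:adjdisttomedian} and the same double-median contradictions, then reads off the two quartets. The only difference is organizational: the paper splits on whether the vertex of the $K_{2,3}$ closest to $\mu$ is a spoke or a hub, whereas you split on whether the two hubs are equidistant from $\mu$ and use pigeonhole on the spokes; both case analyses lead to the same contradictions and the same conclusion.
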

\begin{proof}
The \emph{if} direction follows from Lemma \ref{lem:2quartets->Q3MK23} and the
fact that $G$ is $Q_3^-$-free. For the \emph{only-if} direction suppose that
$G=(V,E)$ contains an induced $K_{2,3}$ with bipartition
$\{x,x'\}\cupdot\{y,y',y''\}$. Let $v\in \{x,x',y,y',y''\}$ be one of the
vertices that is closest $\mu$ and suppose that $d_G(\mu,v)=\ell-1$ for some
$\ell\geq 1$. Assume, for contradiction, that $v=y$. Since $y$ is closest to
$\mu$ and $\{y,x\},\{y,x'\}\in E$, Lemma \ref{lem:adjdisttomedian} implies that
$d_G(\mu,x)=d_G(\mu,x')=\ell$. Since $\{y',x\},\{y',x'\}\in E$ and by Lemma
\ref{lem:adjdisttomedian}, we have $d_G(\mu,y') \in \{\ell-1, \ell+1\}$. In case
$d_G(\mu,y') = \ell-1$, on easily verifies that $y,y'\in I_G(\mu,x,x')$ which is
not possible, since $\med_G(\mu,x,x')$ is well-defined. But then, $d_G(\mu,y') =
\ell+1$ and $x,x'\in I_G(\mu,y,y')$ which yields the desired contradiction.
Therefore, $v\notin \{y,y',y''\}$ must hold. We may assume w.l.o.g.\ that $v=x$.
In this case, $x$ is closest to $\mu$ and $\{x,y\},\{x,y'\},\{x,y''\}\in E$
together with Lemma \ref{lem:adjdisttomedian} implies that
$d_G(\mu,y)=d_G(\mu,y')=d_G(\mu,y'') = \ell$. Again by Lemma
\ref{lem:adjdisttomedian} and since $x'$ is adjacent to $y,y'$ and $y''$, we
have $d_G(\mu,x') \in \{\ell-1, \ell+1\}$. If, however, $d_G(\mu,x') = \ell-1$,
then $x,x'\in I_G(\mu,y,y')$ which is not possible, since $\med_G(\mu,y,y')$ is
well-defined. Hence, we have $d_G(\mu,x') = \ell+1$. Thus, $G$ contains 
distance-$\ell$-static quartets $(y,y',x',\mu)$ and $(y,y'',x',\mu)$. 
\end{proof}

Median graphs are characterized as those graph in which the convex hull
of every isometric cycle is a hypercube. This property is, in general, 
not shared by $k$-median graphs. By way of example, the convex hull
of the isometric ``outer'' cycle $C_6$ of the graph $G$ in Fig.\ \ref{fig:exmpls}(c)
coincides with $G$ and contains an induced $Q_3^-$ and a $K_{2,3}$ but is, 
obviously, not a hypercube. However, the existence of isometric cycles on
six vertices in a $k$-median graph $G$ is a clear indicator for induced $Q_3^-$s
in $G$.

\begin{proposition}\label{prop:C6-Q3m}
A $k$-median graph $G$ contains an induced $Q_3^-$ if and only if $G$
contains an induced (or equivalently, an isometric) $C_6$. 	
\end{proposition}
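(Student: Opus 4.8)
The plan is to prove both implications together with the parenthetical equivalence, treating the forward direction as the trivial one and spending the effort on the converse. For the direction ``$Q_3^-$ implies an induced $C_6$'', I would simply exhibit six of its seven vertices: labelling $Q_3$ by $\{0,1\}^3$ and deleting the corner $111$, the vertices $001,011,010,110,100,101$ induce, in this cyclic order, a $C_6$ (each consecutive pair differs in one coordinate and every non-consecutive pair in two or three, so there is no chord), and this $C_6$ is isometric in $Q_3^-$. Since an isometric cycle is always induced (a chord would shorten a distance), it therefore suffices to prove (i) every induced $C_6$ forces an induced $Q_3^-$, and (ii) an induced $Q_3^-$ forces an isometric $C_6$ in $G$; for (ii) the only point to check is that two opposite vertices of the hexagon above remain at distance $3$ in $G$, which I would argue from bipartiteness (Prop.~\ref{prop:bipthm}) together with the \mc vertex ruling out a common neighbour. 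The substance is (i).

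For (i), fix a \mc vertex $\mu$ and, among all induced $C_6$'s of $G$, choose one $C=v_0v_1\cdots v_5$ minimizing $\sum_{i} d_G(\mu,v_i)$. Let $v_0$ be a vertex of $C$ closest to $\mu$, say $d_G(\mu,v_0)=K=\min_i d_G(\mu,v_i)$. By Prop.~\ref{prop:bipthm} the graph is bipartite, so Lemma~\ref{lem:adjdisttomedian} gives that consecutive vertices of $C$ differ by exactly $1$ in their distance to $\mu$, and Lemma~\ref{lem:cndist} pins the pattern: every vertex of $C$ lies at distance in $\{K,K+1,K+2,K+3\}$ from $\mu$ with the parities alternating around $C$. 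The proof then splits according to whether $C$ contains a vertex at distance $\ge K+2$.

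Suppose first such a vertex exists; then $C$ has a \emph{peak} $v_j$ both of whose neighbours $v_{j-1},v_{j+1}$ are closer to $\mu$, so $(v_{j-1},v_{j+1},v_j,\mu)$ is distance-$\ell$-static with $\ell=d_G(\mu,v_j)-1\ge K+1\ge1$. Lemma~\ref{lem:c4claim} yields $a\coloneqq\med_G(\mu,v_{j-1},v_{j+1})$ with $d_G(\mu,a)=\ell-1$ and $\{a,v_{j-1}\},\{a,v_{j+1}\}\in E$. I would \emph{fold} $C$ by replacing $v_j$ with $a$, obtaining a closed walk $C'$. Bipartiteness forbids the triangles on $a,v_{j-1},v_{j-2}$ and on $a,v_{j+1},v_{j+2}$, so $a\not\sim v_{j\pm2}$, and a distance count rules out $a$ coinciding with any $v_i$; hence the only chord $C'$ could carry is $\{a,v_{j+3}\}$. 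If it is absent, $C'$ is an induced $C_6$ with $\sum_i d_G(\mu,\cdot)$ smaller by $2$, contradicting minimality. If it is present, then $a$ is adjacent to the three pairwise-distance-$2$ vertices $v_{j-1},v_{j+1},v_{j+3}$, and one checks (again via distances to $\mu$ and the inducedness of $C$) that $\{a\}\cup V(C)$ induces a $Q_3^-$ with centre $a$.

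In the remaining case every vertex of $C$ lies at distance $K$ or $K+1$, so $C$ alternates $K,K+1,K,K+1,K,K+1$ with minima $v_0,v_2,v_4$; note $K\ge1$, since three distinct minima cannot all equal $\mu$. Put $a_1=\med_G(\mu,v_0,v_2)$, $a_2=\med_G(\mu,v_2,v_4)$, $a_3=\med_G(\mu,v_4,v_0)$, each at distance $K-1$ and adjacent to the two minima defining it. Uniqueness of these medians (the \mc property of $\mu$) shows that if two of the $a_i$ coincide then all three do: e.g.\ $a_1=a_2$ makes $a_1$ adjacent to $v_0,v_4$ at distance $K-1$, whence $a_1\in I_G(\mu,v_0)\cap I_G(\mu,v_4)\cap I_G(v_0,v_4)=\{a_3\}$. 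If all three equal a single vertex $c$, then $c\sim v_0,v_2,v_4$ and $\{c\}\cup V(C)$ induces a $Q_3^-$. If they are pairwise distinct, then $a_1v_2a_2v_4a_3v_0$ is an induced $C_6$: apex--apex pairs are non-adjacent because $a_1,a_2,a_3$ all lie at distance $K-1$ from $\mu$ and hence in a common bipartition class, while each apex--minimum cross chord (such as $\{a_1,v_4\}$) would, by the same median-uniqueness argument, force two apexes to coincide; its distance sum is smaller, contradicting minimality. Hence $G$ contains an induced $Q_3^-$ in every case, completing (i). The main obstacle throughout is exactly this chord/collision bookkeeping: one must guarantee that each fold and each median either strictly decreases the potential $\sum_i d_G(\mu,v_i)$ through a genuine induced $C_6$ or else directly exposes the centre of a $Q_3^-$, and this is precisely where bipartiteness, triangle-freeness, and the uniqueness of $\med_G(\mu,\cdot,\cdot)$ are used.
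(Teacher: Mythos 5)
Your proof is correct and rests on the same engine as the paper's: both hinge on Lemma~\ref{lem:c4claim}, folding a distance-$\ell$-static quartet of the hexagon into an induced $C_4$ whose fourth vertex either exposes the centre of a $Q_3^-$ or replaces a vertex of the hexagon by one closer to $\mu$. The constructions are essentially identical --- your Case B (all vertices at distance $K$ or $K+1$, with three medians $a_1,a_2,a_3$ replacing the three maxima) is the paper's case $\Delta_1$, and your Case A (fold a peak) is what the paper does in its cases $\Delta_2,\Delta_2^*,\Delta_3,\Delta_4$. What differs is the global organization: the paper enumerates five distance patterns around the hexagon, chains reductions $\Delta_4\to\Delta_3\to\Delta_2^*\to\Delta_1$, and finishes with an iterative ``the process must terminate since $G$ is finite'' argument, whereas you run a single extremal argument on a hexagon minimizing the potential $\sum_i d_G(\mu,v_i)$ and treat all peaks uniformly. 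This buys a shorter proof with two cases instead of five and replaces the informal termination claim by a clean minimal-counterexample contradiction; the price is that the chord bookkeeping must be airtight, and there you have one small slip: in Case A it is not a ``distance count'' that rules out $a\in V(C)$, since cycle vertices at distance $M-2$ from $\mu$ can exist (e.g.\ the two non-peak minima when the pattern is the paper's $\Delta_2$). The correct one-line repair is structural: $a$ is adjacent to both $v_{j-1}$ and $v_{j+1}$, and in an induced $C_6$ the only vertex adjacent to both is $v_j$, which $a$ cannot equal because $d_G(\mu,a)=d_G(\mu,v_j)-2$. With that fix your argument is complete.
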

\begin{proof}
It is an easy task to verify that the terms isometric and induced are equivalent 
for cycles on six vertices in case that $G$ is bipartite.
Moreover, one easily verifies  that any induced $Q_3^-\subseteq G$ contains an
induced $C_6$ (cf.\ Fig.\ \ref{fig:exmpls}(b)). Suppose now that $G=(V,E)$
contains an induced $C_6$, call it $C$.  
Let $\delta\colon V\to \mathbb{N}$
be the map that assigns to each vertex its distance to $\mu$, i.e.,
$\delta(v) = d_G(\mu,v)$. We assume that $V(C) = \{v_1,\dots, v_6\}$
and $E(C) = \{\{v_1,v_2\}, \dots \{v_5,v_6\}, \{v_6,v_1\}\}$. 
W.l.o.g.\ assume that $v_1$ is a vertex in  $C$ that is closest to 
$\mu$ and that $\delta(v_1)=\ell-1$ for some $\ell\geq 1$. 
When traversing $C$ from $v_1$ to $v_2$ \dots  to $v_6$
we obtain the ordered tuple $\Delta(C) \coloneqq (\delta(v_1), \delta(v_2) \dots, \delta(v_6))$. 
We denote by  $\Delta^{*}(C)$ the tuple $(\delta(v_1), \delta(v_6), \dots, \delta(v_2))$, 
i.e., the tuple obtained by traversing $C$ from $v_1$ to $v_6$ \dots  to $v_2$.

Full enumeration of all possible distances of the vertices in $C$
to $\mu$ and using Lemma  \ref{lem:adjdisttomedian}, shows that 
$\Delta(C)$ is always of one of the form  
\begin{center}
\begin{itemize}[noitemsep,nolistsep]
\item[] $\qquad$ $\Delta_1 \coloneqq (\ell-1, \ell, \ell-1,\ell, \ell-1, \ell)$; 
\item[] $\qquad$  $\Delta_2 \coloneqq  (\ell-1, \ell, \ell-1,\ell, \ell+1, \ell)$;
\item[] $\qquad$  $\Delta^*_2 \coloneqq (\ell-1, \ell, \ell+1,\ell, \ell-1, \ell)$;
\item[] $\qquad$  $\Delta_3 \coloneqq (\ell-1, \ell, \ell+1,\ell, \ell+1, \ell)$; or 
\item[] $\qquad$ $\Delta_4 \coloneqq (\ell-1, \ell, \ell+1,\ell+2, \ell+1, \ell)$.
\end{itemize}\end{center}
Suppose first that $\Delta(C) = \Delta_1 = (\ell-1, \ell, \ell-1,\ell, \ell-1, \ell)$. 
Then, $(v_3,v_5,v_4,\mu)$ is a distance-$(\ell-1)$-static quartet. 
By Lemma \ref{lem:c4claim}, there is an induced $C_4$ in $G$ with vertices $v_3,v_4,v_5,u$ and edges
$\{v_3,v_4\}, \{v_4,v_5\}, \{v_5,u\}, \{u,v_3\}$. In particular, $\delta(u)=\ell-2$. 
Since $C$ is induced, $u\notin V(C)$. If $u$ is adjacent to $v_1$, then $G$ contains an induced $Q_3^-$ and we are done. 
Hence, suppose that $\{u,v_1\}\notin E$. 
Now consider the distance-$(\ell-1)$-static quartet $(v_1,v_5,v_6,\mu)$.
Again there is an induced $C_4$ containing $v_1,v_5,v_6$ and $u'$ with  $\delta(u')=\ell-2$. 
If $u'=u$ or $u$ is adjacent to $v_3$, then $G$ contains an induced $Q_3^-$ and we are done. Hence, assume $u\neq u'$
and that $\{u',v_3\}\notin E$. 
Analogously, for the distance-$(\ell-1)$-static quartet $(v_1,v_3,v_2,\mu)$, 
Again there is an induced $C_4$ containing $v_1,v_2,v_3$ and $u''$ with  $\delta(u'')=\ell-2$.
If $u''=u$ or $u''=u'$ or $u''$ is adjacent to $v_5$, then $G$ contains an induced $Q_3^-$ and we are done.
Hence, we can assume that $u,u'$ and $u''$ are pairwise distinct
and that $\{u',v_5\}\notin E$.
Note that  $\delta(u)=\delta(u')=\delta(u'') = \ell-2$.
Moreover, by the latter arguments, the cycle traversing 
$v_1,u'',v_3,u,v_5,u'$ is an induced $C_6$, call it $C'$. 
Moreover, $\Delta(C') =  (\ell-1, \ell-2, \ell-1,\ell-2, \ell-1, \ell-2)$. 
Now we can repeat the latter arguments on $C'$ to obtain either an induced $Q_3^-$
or an induced $C_6$, call it $C''$, such that $\Delta(C'') =  (\ell-3, \ell-2, \ell-3,\ell-2, \ell-3, \ell-2)$. 
Since $G$ is finite, the latter process must terminate, i.e., in one of the steps
we cannot find a further induced cycle $\tilde C$ on six vertices, as the distances
in $\Delta(\tilde C)$ become closer and closer to $\mu$. Thus, $G$ must contain
an induced $Q_3^-$.  

Assume now that $\Delta(C) = \Delta_2 = (\ell-1, \ell, \ell-1,\ell, \ell+1, \ell)$. 
In this case, $(v_4,v_6,v_5,\mu)$ is a distance-$\ell$-static quartet. 
Hence, there is an induced $C_4$ in $G$ with vertices $v_4,v_5,v_6,u$. 
In particular, $\delta(u)=\ell-1$ and, since  $C$ is induced, $u\notin V(C)$. If $\{u,v_2\}\in E$, then 
$G$ contains a $Q_3^-$. Otherwise, if $\{u,v_2\}\notin E$, then $G$ contains 
an $C_6$, called $C'$, that is induced by $v_1,v_2,v_3,v_4,u,v_6$ and for which
$\Delta(C')=\Delta_1$. As already shown, this implies that $G$ contains a $Q_3^-$. 
The case $\Delta(C) = \Delta^*_2$ is shown analogously. 

Assume now that $\Delta(C) = \Delta_3 = (\ell-1, \ell, \ell+1,\ell, \ell+1, \ell)$
In this case, $(v_4,v_6,v_5,\mu)$ is a distance-$\ell$-static quartet
and there is an induced $C_4$ in $G$ with vertices $v_4,v_5,v_6,u$ 
and we have  $\delta(u)=\ell-1$. Since  $C$ is induced, $u\notin V(C)$.
If  $\{u,v_2\}\in E$, then $G$ contains a $Q_3^-$. Otherwise, the cycle $C'$
along the vertices $v_1,v_2,v_3,v_4,u,v_6$ is an induced $C_6$ and satisfies
$\Delta(C') = (\ell-1, \ell, \ell+1,\ell, \ell-1, \ell)$ and we can reuse
the arguments as in case $\Delta^*_2$ to conclude that $G$ contains an
induced $Q_3^-$. 

Finally suppose that $\Delta(C) = \Delta_4 =  (\ell-1, \ell, \ell+1,\ell+2, \ell+1, \ell)$. 
In this case, $(v_3,v_5,v_4,\mu)$ is a distance-$(\ell+1)$-static quartet
and there is an induced $C_4$ in $G$ with vertices $v_3,v_4,v_5,u$.
and we have  $\delta(u)=\ell$. Since  $C$ is induced, $u\notin V(C)$.
If  $\{u,v_1\}\in E$, then $G$ contains a $Q_3^-$. Otherwise, the cycle $C'$
along the vertices $v_1,v_2,v_3, u, v_5,v_6$ is an induced $C_6$ and satisfies
$\Delta(C') = (\ell-1, \ell, \ell+1,\ell, \ell+1, \ell) = \Delta_3$ 
and we can reuse the arguments as in case $\Delta_3$ to conclude that $G$ contains an
induced $Q_3^-$. 
\end{proof}

Partial cubes are graphs that have an isometric embedding into a hypercube. 
A cycle $C_6$ is a partial cube. Hence, Prop.\ \ref{prop:C6-Q3m} implies that
there are partial cubes that are not $k$-median graphs. Although every
induced $C_6$ in a $k$-median graph indicates the existence of a $Q_3^-$, a similar
result does not necessarily hold for larger isometric cycles, see Fig.\ \ref{fig:exmpl-cube}
for an example.

\begin{figure}[t]
\centering
\includegraphics[width=.6\textwidth]{./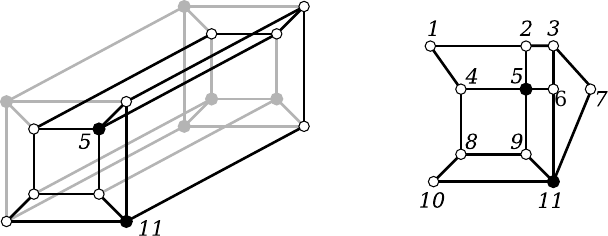}
\caption{A $K_{2,3}$-free proper 2-median graph $G$ (right). Its \mc vertices $5$ and $11$ are highlighted
			as black vertices. $G$ is an isometric subgraph of the 
			hypercube $Q_4$ and its isometric embedding is shown left. According to Prop.\ \ref{prop:C6-Q3m}, 
			each of its isometric cycles of length six
			(induced by $4,5,6,8,10,11$, resp., $2,3,5,7,9,11$) are contained in a $Q_3^-$ subgraph of $G$. 
			There is an isometric $C_8$ in $G$ induced by $1,2,3,4,7,8,10,11$ 
			whose convex hull is $G$. This $C_8$ is not contained in an induced $Q_4^-$ in $G$. }			
			
\label{fig:exmpl-cube}
\end{figure}

Fig.\ \ref{fig:exmpls} shows that there are $k$-median graphs that contain induced 
$Q_3^-$s or $K_{2,3}$s. In fact, we have neither found examples nor clear arguments 
that shows that there are $K_{2,3}$- and $Q_3^-$-free $k$-median graphs
(except the median graphs whose isometric cycles are $C_4$s) or $K_{2,3}$-free $k$-median graphs that are not partial cubes. 
This, in particular, poses the following open problems.

	\begin{problem}\label{problem:Q3MK23}
	Does every $k$-median graph that is not a median graph contain
	an induced $Q_3^-$ or $K_{2,3}$? 	%
	\end{problem}		

	 Cor.\ \ref{cor:modK23med} shows that every $k$-median graph that is
	not a  median graph must contain an induced $K_{2,3}$. 	

	\begin{problem}\label{problem:hypercube}
	Is every $K_{2,3}$-free $k$-median graph an (isometric) subgraph of a median graph or a hypercube?
	\end{problem}

\section{Characterization of $\boldsymbol{k}$-median graphs}
\label{sec:convex}

\subsection{Convexity}

We start here with results utilizing a generalized notion of convexity.
To recall, a subgraph $H\subseteq G$ is $v$-convex, if $v\in V(H)$ and every
shortest path connecting $v$ and $x$ in $G$ is also contained in $H$, for all
$x\in V(H)$. Note that $v$-convex subgraphs on the same vertex sets do not
necessarily share the same edge set and thus may differ from each other. 
By way of example, in a $K_3$ a $v$-convex subgraph on three vertices could either be a path
$P_3$ or the graph $K_3$. See Figure \ref{fig:exmpl-v-convex} for further examples. Nevertheless, uniqueness for $v$-convex subgraphs
holds in case the underlying graph is bipartite.

\begin{lemma}\label{lem:v-convex-bip}
For every bipartite graph $G$, every $v$-convex subgraph is induced. Thus, for
all $W\subseteq V(G)$ and all $v\in W$, any two $v$-convex subgraphs of $G$ with
vertex set $W$ are identical.
\end{lemma}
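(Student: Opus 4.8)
The plan is to first reduce the uniqueness claim to the statement that every $v$-convex subgraph is induced. Indeed, if $H_1$ and $H_2$ are both $v$-convex with $V(H_1)=V(H_2)=W$ and both are induced subgraphs, then $H_1 = G[W] = H_2$, so they are identical. Hence it suffices to show that an arbitrary $v$-convex subgraph $H$ with $W\coloneqq V(H)$ satisfies $E(H)=E(G[W])$. The inclusion $E(H)\subseteq E(G[W])$ is immediate since $H$ is a subgraph of $G$, so I only need the reverse inclusion, i.e.\ that every edge of $G$ with both endpoints in $W$ already lies in $H$.

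So I would fix an edge $\{a,b\}\in E(G)$ with $a,b\in W$ and aim to show $\{a,b\}\in E(H)$. First I note that, since $H$ is $v$-convex, every vertex of $W$ is joined to $v$ by a shortest path in $G$; thus $W$ lies in the connected component of $v$, and all distances $d_G(v,\cdot)$ involved are finite. Bipartiteness now enters through Lemma \ref{lem:01-diff-adge}: for the edge $\{a,b\}$ we obtain $d_G(v,a)\neq d_G(v,b)$, and together with $|d_G(v,a)-d_G(v,b)|\leq 1$ this forces $|d_G(v,a)-d_G(v,b)|=1$. Without loss of generality, $d_G(v,b)=d_G(v,a)+1$.

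Next I would take a shortest $(v,a)$-path $P$ in $G$. By $v$-convexity and $a\in W$, the whole of $P$ lies in $H$. Since every internal vertex of $P$ is strictly closer to $v$ than $a$ and $d_G(v,b)>d_G(v,a)$, the vertex $b$ does not lie on $P$; hence appending the edge $\{a,b\}$ to $P$ yields a simple $(v,b)$-path of length $d_G(v,a)+1=d_G(v,b)$, that is, a shortest $(v,b)$-path. Applying $v$-convexity once more, now to $b\in W$, this shortest path is contained in $H$, so in particular $\{a,b\}\in E(H)$. This establishes the reverse inclusion and hence $H=G[W]$.

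The argument is short, and the only point requiring genuine care—and the only place where bipartiteness is essential—is the step producing a shortest path that uses the edge $\{a,b\}$. In a non-bipartite graph an edge may join two vertices equidistant from $v$ (as in $K_3$, where a $v$-convex subgraph on all three vertices can be merely a path $P_3$); then neither orientation of the edge extends a shortest $(v,\cdot)$-path, and the edge is genuinely not forced into $H$. Lemma \ref{lem:01-diff-adge} is exactly what rules this out, which is why the statement is restricted to bipartite $G$.
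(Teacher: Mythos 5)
Your proof is correct and follows essentially the same route as the paper's: use Lemma \ref{lem:01-diff-adge} (bipartiteness) to force the two endpoints of an edge inside $H$ to have $d_G(v,\cdot)$-distances differing by exactly one, then extend a shortest path to the closer endpoint by that edge to obtain a shortest path to the farther endpoint, which $v$-convexity forces into $H$. The only (harmless) differences are that you spell out the reduction of the uniqueness claim to induced-ness and apply Lemma \ref{lem:01-diff-adge} directly in $G$ rather than in $H$.
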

\begin{proof}
Let $G$ be a bipartite graph and $H$ be a $v$-convex subgraph of $G$. Let
$\{x,y\}\in E(G)$ be an edge with $x,y\in V(H)$. By $v$-convexity of $H$, we
have $d_G(v,x)= d_{H}(v,x) = k$ and $d_G(v,y)= d_{H}(v,y) = k'$. 
Since $H\subseteq G$ is bipartite and connected, we can apply Lemma \ref{lem:01-diff-adge} to
conclude that $|k-k'|=1$ must hold. We may assume w.l.o.g.\ that $k'=k+1$.
Consider a shortest $(v,x)$-path $P$ in $G$. Since $\|P\| = k < d_G(v,y)$, the
vertex $y$ cannot be located on $P$. Thus, one can extend $P$ to a $(v,y)$-path
$P'$ in $G$ by adding the $\{x,y\}$. By construction, $\|P'\|=k+1$ and so $P'$
is a shortest $(v,y)$-path in $G$. Since $H$ is $v$-convex, we have $P'\subseteq
H$ and so $\{x,y\}\in E(H)$. Hence, $H$ is an induced subgraph of $G$. 
\end{proof}

Although $v$-convex subgraphs are not necessarily convex (cf.\ Fig.\
\ref{fig:exmpl-v-convex}), they are isometric subgraphs and $1$-median graphs
whenever $v$ is a \mc vertex in $G$.

\begin{lemma}\label{lem:rho-convex=>isometric}
Let $G$ be a $k$-median graph, $\mu\in V(G)$ be some \mc vertex in $G$ and $H$
be a $\mu$-convex subgraph of $G$. Then, $H$ is an isometric subgraph of $G$. In
particular, $H$ is a $1$-median graph with \mc vertex $\mu$ that satisfies
$\med_H(\mu,x,y) = \med_G(\mu,x,y)$ for all $x,y\in V(H)$.
\end{lemma}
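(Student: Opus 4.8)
The plan is to first establish isometry and then deduce the median statement from it, the crux being that the $G$-median $m\coloneqq\med_G(\mu,x,y)$ is forced to lie in $H$ by $\mu$-convexity.

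\emph{Isometry.} Fix $x,y\in V(H)$. Since $H\subseteq G$ we have $d_H(x,y)\geq d_G(x,y)$, so it suffices to exhibit an $(x,y)$-walk in $H$ of length $d_G(x,y)$. If $x=\mu$ or $y=\mu$ this is immediate, as $\mu$-convexity places an entire shortest $(\mu,\cdot)$-path inside $H$; so assume that $x,y\neq\mu$ are distinct. Because $\mu$ is a \mc vertex, $m\coloneqq\med_G(\mu,x,y)$ is well-defined, and in particular $m\in I_G(\mu,x)\subseteq V(H)$, where the inclusion is exactly the property recorded just after the definition of $v$-convexity. I would then pick a shortest $(\mu,x)$-path $P$ running through $m$ (which exists since $m\in I_G(\mu,x)$); by $\mu$-convexity $P\subseteq H$, and the subpath of $P$ from $m$ to $x$ is a shortest $(m,x)$-path lying in $H$. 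Choosing a shortest $(\mu,y)$-path through $m$ analogously yields a shortest $(m,y)$-path in $H$. Concatenating these two segments produces an $(x,y)$-walk inside $H$ of length $d_G(m,x)+d_G(m,y)=d_G(x,y)$, the last equality holding because $m\in I_G(x,y)$. Hence $d_H(x,y)=d_G(x,y)$, so $H$ is isometric.

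\emph{Median.} Using isometry I would first observe that for all $u,v\in V(H)$ the interval computed in $H$ satisfies $I_H(u,v)=I_G(u,v)\cap V(H)$, since $d_H=d_G$ on $V(H)$ turns the defining distance equation for $H$ into the one for $G$. Intersecting the three relevant intervals then gives $I_H(\mu,x,y)=I_G(\mu,x,y)\cap V(H)$ for all distinct $x,y\in V(H)\setminus\{\mu\}$. As $\mu$ is \mc in $G$ we have $I_G(\mu,x,y)=\{m\}$, and we already established $m\in V(H)$; therefore $I_H(\mu,x,y)=\{m\}$, so $\med_H(\mu,x,y)$ is well-defined and equals $\med_G(\mu,x,y)$. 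This shows $\mu$ is a \mc vertex of $H$, and hence $H$ is a $1$-median graph.

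The only genuinely delicate point is the isometry step, and within it the observation that the $G$-median $m$ must belong to $V(H)$: this is precisely where $\mu$-convexity is indispensable, since $m$ lies on a shortest $(\mu,x)$-path and $\mu$-convex subgraphs contain $I_G(\mu,x)$. Once $m\in V(H)$ is secured, splitting a shortest $(x,y)$-path through $m$ into two halves — each a tail of a $\mu$-rooted shortest path guaranteed to lie in $H$ — is routine; note that the concatenation need only be a walk, not a simple path, which already suffices to bound $d_H(x,y)$. Everything else, including the identity $I_H(u,v)=I_G(u,v)\cap V(H)$ and the transfer of the median, then follows formally from isometry.
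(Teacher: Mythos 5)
Your proof is correct and follows essentially the same route as the paper: both hinge on the observation that $m=\med_G(\mu,x,y)$ lies in $H$ because it sits on a shortest $(\mu,x)$-path and $H$ is $\mu$-convex, then split shortest $\mu$-rooted paths at $m$ to obtain $d_H(x,y)=d_G(x,y)$, and finally transfer the median using isometry. The differences are only cosmetic: the paper closes the isometry step with a triangle-inequality sandwich rather than your walk concatenation, and it excludes a second median in $H$ by lifting it back to $G$ rather than via your identity $I_H(u,v)=I_G(u,v)\cap V(H)$ — the same content in both cases.
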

\begin{proof}
Let $G$ be a $k$-median graph and $\mu\in V(G)$ be some \mc vertex in $G$ and
$H$ be a $\mu$-convex subgraph $H$ of $G$. We start with showing that $H$ is an
isometric subgraph of $G$, that is, $d_{H}(x,y) = d_{G}(x,y)$ for all $x,y\in
V(H)$. To this end, let $x,y\in V(H)$ be chosen arbitrarily. If $x=y$, the
statement is vacuously true. Hence, assume that $x\neq y$. Suppose first that
$x=\mu$. Since $H$ is a subgraph of $G$, we have $d_H(\mu,y)\not< d_G(\mu,y)$.
Moreover, $\mu$-convexity of $H$ implies that all shortest paths between $\mu$
and $y$ are also in $H$ and thus, $d_H(\mu,y) = d_G(\mu,y)$. Since $y$ was
chosen arbitrarily, we have $d_{H}(\mu,v) = d_{G}(\mu,v)$ for all $v\in V(H)$.

Assume now that $x$ and $y$ are both distinct from $\mu$. Since $\mu$ is a \mc
vertex in $G$, $z\coloneqq \med_G(\mu,x,y)$ is well-defined. We show first that
$d_H(z,x) = d_G(z,x)$. Clearly, if $x=z$, the statement is vacuously true. Thus,
assume that $x\neq z$. Since $d_{H}(\mu,v) = d_{G}(\mu,v)$ for all $v\in V(H)$,
it holds that $d_{H}(\mu,x) = d_{G}(\mu,x)$. This and $\mu$-convexity of $H$
implies that every shortest $(\mu,x)$-path in $G$ is also contained in $H$ and,
in particular, a shortest $(\mu,x)$-path in $H$. As $z$ is the unique median
between $\mu$, $x$ and $y$ in $G$, $z$ is located on some shortest
$(\mu,x)$-path $P$ in $G$ that is, by the latter arguments, also a shortest
$(\mu,x)$-path in $H$. This, in particular, implies that $z\in I_H(\mu,x)$.
Consider the subpath $P'$ of $P$ that connects $z$ and $x$. Since $P$ is a
shortest path in $H$, resp., $G$, we can conclude that $P'$ is shortest path in
$H$, resp., $G$ between $z$ and $x$. Hence, $d_{H}(z,x) = d_{G}(z,x)$. By
similar arguments, $z\in I_H(\mu,y)$ and $d_{H}(z,y) = d_{G}(z,y)$ must hold. By
the triangle inequality, $d_{H}(x,z) + d_{H}(z,y) \geq d_{H}(x,y)$. Since
$d_{H}(z,x) = d_{G}(z,x)$ and $d_{H}(z,y) = d_{G}(z,y)$, we can conclude that
$d_{G}(x,z) + d_{G}(z,y) = d_{H}(x,z) + d_{H}(z,y) $. Moreover, $z =
\med_G(\mu,x,y)$ and the fact that $H$ is a subgraph of $G$ implies that
$d_{H}(x,y) \geq d_{G}(x,y) =d_{G}(x,z) + d_{G}(z,y)$. Taking the latter three
arguments together yields \[d_{H}(x,y) \geq d_{G}(x,y) = d_{G}(x,z) + d_{G}(z,y)
= d_{H}(x,z) + d_{H}(z,y) \geq d_{H}(x,y).\] Hence, $d_{H}(x,y) = d_{G}(x,y)$ as
claimed and $z\in I_H(x,y)$. Therefore, $H$ is an isometric subgraph of $G$. In
particular, $z= \med_G(\mu,x,y)\in I_H(\mu,x)\cap I_H(\mu,y)\cap I_H(x,y) =
I_H(\mu,x,y)$ must hold.
 
It remains to show that $H$ is a $1$-median graph with \mc vertex $\mu$. By the
latter argument, $z \in I_H(\mu,x,y)$. We continue with showing that
$|I_H(\mu,x,y)| = 1$. Assume, for contradiction, that $|I_H(\mu,x,y)|\geq 2$ and
let $z'\in I_H(\mu,x,y)\setminus \{z\}$. Hence, $z'$ is located on some shortest
$(\mu,x)$-path $P$, some shortest $(\mu,y)$-path $P'$ and some shortest
$(x,y)$-path $P''$ in $H$. Since $H$ is isometric, we have $\|P\|= d_G(\mu,x)$,
$\|P'\|= d_G(\mu,y)$ and $\|P''\|= d_G(x,y)$. This and the fact that $H$ is a
subgraph of $G$ implies that $P$, $P'$ and $P''$ are shortest $(\mu,x)$-paths,
shortest $(\mu,y)$-paths and shortest $(x,y)$-paths in $G$. But then $z'\in
I_G(\mu,x)\cap I_G(\mu,y)\cap I_G(x,y)$ must hold which implies that
$\med_G(\mu,x,y)$ is not well-defined; a contradiction. In summary, $H$ is a
$1$-median graph with \mc vertex $\mu$. In particular, $\{z\} = I_H(\mu,x,y)$
holds and thus, $\med_H(\mu,x,y) = \med_G(\mu,x,y)$ for all $x,y\in V(H)$.
\end{proof}

Lemma \ref{lem:rho-convex=>isometric} holds, in general, only for 
$v$-convex subgraphs with $v$ being a \mc vertex. By way of example, 
the graph $G_2$ in Fig.\ \ref{fig:exmpl-v-convex} is a $k$-median graph
but the $v$-convex subgraphs for the non-\mc vertex is not isometric. 
Even more, the graph $G_4$ in Fig.\ \ref{fig:exmpl-v-convex} shows that 
$v$-convex subgraphs of $k$-median graphs are not necessarily $1$-median graphs.
Lemma \ref{lem:rho-convex=>isometric} can be used to obtain the following

\begin{proposition}\label{prop:rho-convex-root}
A vertex $\mu\in V(G)$ is a \mc vertex in a graph $G$ if and only if $\mu$ is a
\mc vertex in every $\mu$-convex subgraph of $G$.
\end{proposition}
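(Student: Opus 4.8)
The plan is to prove the two implications separately, noting that essentially all of the substantive work has already been carried out in Lemma \ref{lem:rho-convex=>isometric}.

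For the backward direction I would simply observe that $G$ is itself a $\mu$-convex subgraph of $G$: indeed $\mu\in V(G)$, and every shortest $(\mu,x)$-path of $G$ trivially lies in $G$ for all $x\in V(G)$, so the defining condition of $\mu$-convexity is met. Hence, if $\mu$ is a \mc vertex in \emph{every} $\mu$-convex subgraph of $G$, then in particular $\mu$ is a \mc vertex in $G$, and there is nothing further to show.

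For the forward direction I would first note that if $\mu$ is a \mc vertex in $G$, then $G$ contains at least one \mc vertex and is therefore a $1$-median (and thus $k$-median) graph. This is precisely the hypothesis needed to invoke Lemma \ref{lem:rho-convex=>isometric}: for an arbitrary $\mu$-convex subgraph $H$ of $G$, that lemma yields that $H$ is a $1$-median graph with \mc vertex $\mu$ and that $\med_H(\mu,x,y)=\med_G(\mu,x,y)$ is well-defined for all $x,y\in V(H)$. In particular $\med_H(\mu,v,w)$ is well-defined for all distinct $v,w\in V(H)\setminus\{\mu\}$, which is exactly the assertion that $\mu$ is a \mc vertex in $H$. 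Since $H$ was an arbitrary $\mu$-convex subgraph, the claim follows.

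The argument presents no real obstacle once Lemma \ref{lem:rho-convex=>isometric} is available; the only point that requires a moment's care is confirming that the hypotheses of that lemma are satisfied, i.e.\ that the presence of a \emph{single} \mc vertex already makes $G$ a $k$-median graph, so that the lemma may be applied to each of its $\mu$-convex subgraphs. The genuinely new content contributed by the proposition itself is the trivial remark that $G$ is one of its own $\mu$-convex subgraphs, which collapses the backward direction to an immediate observation.
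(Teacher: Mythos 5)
Your proof is correct and follows essentially the same route as the paper: the backward direction is the trivial observation that $G$ is a $\mu$-convex subgraph of itself, and the forward direction is a direct application of Lemma \ref{lem:rho-convex=>isometric}. Your extra remark that a single \mc vertex makes $G$ a $1$-median graph (so the lemma's hypothesis holds) is a correct and slightly more explicit justification of a step the paper leaves implicit.
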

\begin{proof}
The \emph{if} direction follows from the simple fact that $G$ is trivially a
$\mu$-convex subgraph of $G$. For the \emph{only-if} direction, assume that
$\mu\in V$ is a \mc vertex in $G$ and assume that $H$ is a $\mu$-convex subgraph
of $G$. By Lemma \ref{lem:rho-convex=>isometric}, $\mu$ is a \mc vertex in $H$.
\end{proof}

The definition of $k$-median graph and median graphs together with Prop.
\ref{prop:rho-convex-root} immediately implies
\begin{theorem}\label{thm:char-k-median}
$G=(V,E)$ is a $k$-median graph if and only if there are $k$ vertices
$\mu_1,\dots,\mu_k\in V$ such that $\mu_i$ is a \mc vertex in every
$\mu_i$-convex subgraphs of $G$, $1\leq i\leq k$. In particular, $G$ is a median
graph if and only if $v$ is a \mc vertex in every $v$-convex subgraph of $G$,
for all $v\in V(G)$.
\end{theorem}

\begin{corollary}\label{cor:I-rho-convex} 
Let $G$ be a $1$-median graph with $\mu$ being some \mc vertex in $G$. Then, for
all $v\in V(G)$, the subgraph $G[I_G(\mu,v)]$ induced by the interval
$I_G(\mu,v)$ is a $\mu$-convex, isometric subgraph of $G$ and a $1$-median graph
with \mc vertex $\mu$. 
\end{corollary}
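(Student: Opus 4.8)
The plan is to reduce the entire statement to a single property, namely that $H \coloneqq G[I_G(\mu,v)]$ is $\mu$-convex, since Lemma~\ref{lem:rho-convex=>isometric} already bundles the isometry and $1$-median conclusions for \emph{any} $\mu$-convex subgraph of a $k$-median graph. Concretely, once $\mu$-convexity is in hand, I would apply Lemma~\ref{lem:rho-convex=>isometric} with $k=1$ (using that $G$ is a $1$-median graph with \mc vertex $\mu$) to obtain at once that $H$ is an isometric subgraph of $G$ and a $1$-median graph with \mc vertex $\mu$. So the real content is the verification of $\mu$-convexity.

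To check $\mu$-convexity I would first observe that $\mu \in V(H)$, which is immediate from $\mu \in I_G(\mu,v)$. It then remains to show that for every $x \in V(H) = I_G(\mu,v)$, every shortest $(\mu,x)$-path in $G$ is contained in $H$. The key step is an appeal to Lemma~\ref{lem:mulder-interval}: since $x \in I_G(\mu,v)$, we have $I_G(\mu,x) \subseteq I_G(\mu,v) = V(H)$. By definition of the interval, every vertex lying on a shortest $(\mu,x)$-path belongs to $I_G(\mu,x)$, hence to $V(H)$. Because $H$ is an \emph{induced} subgraph of $G$, any edge of $G$ joining two vertices of $V(H)$ is automatically an edge of $H$; in particular, every edge of such a shortest $(\mu,x)$-path lies in $H$. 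Thus the path is contained in $H$ as a subgraph, which is exactly the requirement for $\mu$-convexity.

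I do not expect a serious obstacle here. The only subtlety worth flagging is that $\mu$-convexity demands that the shortest path be contained in $H$ as a \emph{subgraph}, i.e.\ with its edges, not merely that its vertices lie in $V(H)$; this is precisely where being an induced subgraph is used, upgrading the vertex-level interval containment from Lemma~\ref{lem:mulder-interval} to the full path. After that, Lemma~\ref{lem:rho-convex=>isometric} delivers the remaining two assertions verbatim, and the corollary follows.
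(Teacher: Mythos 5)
Your proposal is correct and follows essentially the same route as the paper: both establish $\mu$-convexity of $G[I_G(\mu,v)]$ via Lemma~\ref{lem:mulder-interval} (interval containment $I_G(\mu,x)\subseteq I_G(\mu,v)$ for $x\in I_G(\mu,v)$) and then invoke Lemma~\ref{lem:rho-convex=>isometric} for the isometry and $1$-median conclusions. Your explicit remark that inducedness upgrades vertex-level containment to edge-level (subgraph) containment is a point the paper leaves implicit, but it is the same argument.
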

\begin{proof}
	Let $G$ be a $1$-median graph with $\mu$ being some \mc vertex in $G$ and
	let $v\in V(G)$. Put $I\coloneqq I_G(\mu,v)$. By definition, all shortest
	paths in $G$ between $\mu$ and $v$ are contained in $G[I]$. Let $w\in I$. By
	Lemma \ref{lem:mulder-interval}, $I(\mu,w) \subseteq I$. Thus,
	all shortest $(\mu,w)$-paths are in $G[I]$ for all $w\in I$. Hence, $I$ is
	$\mu$-convex. The second statement now readily follows from Lemma
	\ref{lem:rho-convex=>isometric}
\end{proof}

Note that the converse of Cor.\ \ref{cor:I-rho-convex} is, in general, not
satisfied. To see this, consider the graph $G\simeq K_{2,3}$ with bipartition
$V(G) = X\cupdot Y$ where $X=\{x_1,x_2\}$ and $Y=\{y_1,y_2,y_3\}$. Put
$\mu\coloneqq y_1$. We have $I(\mu,y_1) = \{y_1\}$ and $I(\mu,y_i) =
\{\mu,x_1,x_2,y_i\}$ in case $i\in \{2,3\}$. In addition,
$I(\mu,x_i)=\{\mu,x_i\}$, $i\in \{1,2\}$. One easily observes that for each of
these intervals $I$, the induced subgraph $G[I]$ is $\mu$-convex, isometric and
a $1$-median graph with $\mc$ vertex $\mu$. However, since
$I(\mu,y_2,y_3)=\{x_1,x_2\}$, $\mu$ is not a \mc vertex of $G$.

The following result is well-known (cf.\ \cite{mulder1980interval}) and
the latter results allow us to establish a simple alternative proof. 

\begin{corollary}
$G$ is a median graph if and only if every convex subgraph of $G$ is a median
graph.
\end{corollary}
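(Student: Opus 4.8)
The plan is to dispatch the backward implication immediately and to concentrate all the work on the forward one, which itself collapses to a single observation once Theorem~\ref{thm:char-k-median} is in hand.

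For the \emph{if} direction nothing substantial is needed: $G$ is a convex subgraph of itself, so the hypothesis that every convex subgraph is a median graph already applies to $G$ and yields that $G$ is a median graph.

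For the \emph{only-if} direction, I would assume $G$ is a median graph and let $H$ be an arbitrary convex subgraph. The key observation is that convexity is strictly stronger than $v$-convexity: by definition a convex $H$ contains all shortest $(u,w)$-paths of $G$ for every pair $u,w\in V(H)$, so in particular, fixing any $v\in V(H)$, it contains every shortest $(v,x)$-path of $G$ for all $x\in V(H)$. Hence $H$ is a $v$-convex subgraph of $G$ \emph{simultaneously} for each of its vertices $v$. Now I invoke Theorem~\ref{thm:char-k-median}: since $G$ is a median graph, each $v\in V(G)$ is a \mc vertex in every $v$-convex subgraph of $G$. Applying this to $H$ for each $v\in V(H)$ shows that every vertex of $H$ is a \mc vertex of $H$. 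Therefore $H$ is a $|V(H)|$-median graph, and by the last item of Obs.~\ref{obs:summ} this is precisely the statement that $H$ is a median graph.

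The only point that requires care — and the step I expect to be the main (modest) obstacle — is verifying that $H$ really is $v$-convex \emph{within $G$} for each $v\in V(H)$, i.e.\ that the defining shortest paths are taken in $G$ rather than merely in $H$; this is immediate from the definition of a convex subgraph, but it is the hinge that lets Theorem~\ref{thm:char-k-median} apply verbatim. A self-contained alternative, avoiding that theorem, would argue directly: a convex subgraph is isometric, so $I_H(a,b)\subseteq I_G(a,b)$ for all $a,b\in V(H)$; given $x,y,z\in V(H)$ the median $m=\med_G(x,y,z)$ lies on a shortest $(x,y)$-path of $G$, hence inside $H$ by convexity, giving $m\in I_H(x,y,z)$, while $I_H(x,y,z)\subseteq I_G(x,y,z)=\{m\}$ forces $|I_H(x,y,z)|=1$. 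Either route is short, and I would present the first.
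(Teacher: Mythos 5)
Your proposal is correct and follows essentially the same route as the paper's own proof: both directions are handled identically, with the \emph{if} direction via $G$ being a convex subgraph of itself, and the \emph{only-if} direction by observing that a convex subgraph is $v$-convex for every $v\in V(H)$ and then invoking Theorem~\ref{thm:char-k-median} to conclude $H$ is a $|V(H)|$-median, hence median, graph. The self-contained alternative you sketch is a fine sanity check but, as you say, unnecessary given the theorem.
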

\begin{proof}
The \emph{if} direction follows from the simple fact that $G$ is a convex
subgraph of $G$. For the \emph{only-if} direction, suppose that $G$ is a median
graph and $H$ is some convex subgraph of $G$. Since $H$ is convex, $H$ is
$v$-convex for all $v\in V(H)$. By Theorem \ref{thm:char-k-median}, $v$ is a \mc
vertex in $H$ for all $v\in V(H)$. Hence, $H$ is a $|V(H)|$-median graph and,
therefore, a median graph. 
\end{proof}

It remains, however, an open question if a similar result holds for $k$-median
graphs as well.

\begin{problem}\label{problem:convex}
Is every convex  subgraph of a $k$-median graph $G$ a 1-median graph?
\end{problem}
An affirmative answer to Problem \ref{problem:convex} will
be provided in Prop.\ \ref{prop:convex-subgraph-modular} for modular  $k$-median graphs.

\subsection{Conditions \textit{(C0)}, \textit{(C1)} and \textit{(C2)}}

We provide now two further characterizations of $k$-median graphs. 
	To this end, we prove first
\begin{lemma}\label{lem:C1-dist}
	If a graph $G$ satisfies (C1) w.r.t.\ some vertex $u\in V(G)$, 
	then $d_G(u,a)\neq d_G(u,b)$ for all edges $\{a,b\}\in E(G)$
	and, thus, $G$ is bipartite.
\end{lemma}
\begin{proof}
	By contraposition, suppose that $u$ is a vertex in $G$
	for which there is an edge  $\{a,b\}\in E(G)$ such that 
	$d_G(u,a)= d_G(u,b)$. Hence, $I_G(a,b)=\{a,b\}$. 
	We have $b\notin I_G(u,a)$, since otherwise, 
	 $d_G(u,a)= d_G(u,b)+d_G(a,b)\neq d_G(u,b)$;
	  a contradiction. 
	  By similar arguments, 
	$a\notin I_G(u,b)$. Hence, $I_G(u,a,b) = \emptyset$
	and thus, $G$ does not satisfy (C1) w.r.t.\ $u$. 
	Lemma \ref{lem:01-diff-adge}, finally, implies that $G$ is bipartite.
\end{proof}

\begin{theorem}\label{thm:char3}
A graph $G=(V,E)$ is a $k$-median graph if and only if 
$G$ satisfies (C1) w.r.t.\ $\mu_1,\dots,\mu_k\in V$.
	In this case, the vertices $\mu_1,\dots,\mu_k\in V$ are \mc vertices of $G$.
\end{theorem}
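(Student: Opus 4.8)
The plan is to reduce the theorem to the single-vertex equivalence that a vertex $\mu\in V$ is \mc if and only if $G$ satisfies (C1) w.r.t.\ $\mu$; the statement then follows by applying this equivalence to each of $\mu_1,\dots,\mu_k$, since by definition $G$ is a $k$-median graph precisely when it has $k$ \mc vertices. The forward implication is immediate: if $\mu$ is \mc then $|I_G(\mu,v,w)| = 1$ for all $v,w\in V$ (the degenerate cases $v=\mu$, $w=\mu$, or $v=w$ are easily checked to yield the singletons $\{\mu\}$, $\{\mu\}$, or $\{v\}$, respectively), so $G[I_G(\mu,v,w)]$ consists of a single vertex and is thus non-empty and connected, i.e.\ (C1) holds w.r.t.\ $\mu$. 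Hence every \mc vertex of a $k$-median graph witnesses (C1).

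For the converse, which is where the work lies, I would assume (C1) w.r.t.\ $\mu$ and show $|I_G(\mu,v,w)| = 1$ for all $v,w$. First I invoke Lemma \ref{lem:C1-dist} to conclude that $G$ is bipartite and, more importantly, that $d_G(\mu,a)\neq d_G(\mu,b)$ for every edge $\{a,b\}\in E$. Since (C1) guarantees that $I_G(\mu,v,w)$ is always non-empty and connected, and a connected graph on at least two vertices must contain an edge, it suffices to prove that $G[I_G(\mu,v,w)]$ is \emph{edge-free}: this forces $|I_G(\mu,v,w)| = 1$, so $\mu$ is \mc.

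So suppose for contradiction that some edge $\{a,b\}$ lies in $I_G(\mu,v,w)$. By the above, $d_G(\mu,a)\neq d_G(\mu,b)$, so we may assume w.l.o.g.\ that $d_G(\mu,b) = d_G(\mu,a)+1$. From $a,b\in I_G(\mu,v)$, subtracting the defining equalities $d_G(\mu,a)+d_G(a,v) = d_G(\mu,v) = d_G(\mu,b)+d_G(b,v)$ gives $d_G(a,v) = d_G(b,v)+1$; the identical computation with $w$ in place of $v$, using $a,b\in I_G(\mu,w)$, yields $d_G(a,w) = d_G(b,w)+1$. Finally, $a,b\in I_G(v,w)$ gives $d_G(v,a)+d_G(a,w) = d_G(v,w) = d_G(v,b)+d_G(b,w)$. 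Substituting the two previous identities into the left-hand side produces $d_G(v,a)+d_G(a,w) = (d_G(v,b)+1)+(d_G(b,w)+1) = d_G(v,w)+2$, contradicting $d_G(v,a)+d_G(a,w) = d_G(v,w)$. Thus no such edge exists, (C1) forces $I_G(\mu,v,w)$ to be a single vertex, and $\mu$ is \mc.

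The main obstacle is simply identifying the over-constrained quantity: once one notices that the three simultaneous memberships $a,b\in I_G(\mu,v)$, $a,b\in I_G(\mu,w)$ and $a,b\in I_G(v,w)$ cannot coexist for an edge $\{a,b\}$, the contradiction is a one-line distance count. Bipartiteness (via Lemma \ref{lem:C1-dist}) enters only to secure $d_G(\mu,a)\neq d_G(\mu,b)$, which legitimizes the ``w.l.o.g.'' step; the connectivity half of (C1) then does the remaining work of upgrading ``edge-free'' to ``singleton''.
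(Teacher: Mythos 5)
Your proposal is correct and takes essentially the same route as the paper's proof: both directions hinge on Lemma \ref{lem:C1-dist} to get $d_G(\mu,a)\neq d_G(\mu,b)$ for an edge $\{a,b\}$ inside a connected $G[I_G(\mu,v,w)]$ with more than one vertex, followed by the identical three-interval distance count yielding the contradiction $d_G(v,w)=d_G(v,w)+2$. The only differences are cosmetic (you phrase the goal as ``edge-free implies singleton'' and handle the degenerate cases $v=\mu$, $w=\mu$, $v=w$ explicitly), so no changes are needed.
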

\begin{proof}
	Let $G=(V,E)$ be a graph, $v,w\in V$ be chosen arbitrarily and put
	$I\coloneqq I_G(\mu,v,w)$. Suppose that $\mu$ is a \mc vertex in $G$. By
	Prop.\ \ref{prop:bipthm}, $G$ is bipartite. Moreover, since $\mu$ is a \mc
	vertex, we have $|I| = 1$. Thus, $G[I]\simeq K_1$ is non-empty and
	connected. Since $v,w$ where chosen arbitrarily, $G$ satisfies (C1) w.r.t.\
	$\mu$. 
	
	Assume now that $G$ satisfies (C1) w.r.t\ $\mu\in \{\mu_1,\dots,\mu_k\}$.
	Hence $G$ must be connected. Since $G[I]$ is not the empty graph, we have
	$I\neq \emptyset$. Assume, for contradiction, that $|I|>1$. Since $G$
	satisfies (C1) w.r.t\ $\mu$, $G[I]$ is connected. Thus, there are $a,b\in I$
	such that $\{a,b\}\in E(G)$. In the following, distances
	$d(\cdot,\cdot)=d_G(\cdot,\cdot)$ are taken w.r.t.\ $G$. By Lemma
	\ref{lem:C1-dist}, $d(\mu,a)\neq d(\mu,b)$ and Lemma \ref{lem:01-diff-adge}
	implies that $d(\mu,a)$ and $d(\mu,b)$ differ by exactly one, say $d(\mu,b)
	= d(\mu,a)+1$. Since $a,b\in I(\mu,v)$, it holds that $d(\mu,v) =
	d(\mu,a)+d(a,v) = d(\mu,b)+d(b,v)$. Thus,
	$d(b,v)=d(a,v)+d(\mu,a)-d(\mu,b)=d(a,v)-1$. By similar arguments and since,
	$a,b\in I_G(\mu,w)$, we obtain $d(b,w) = d(a,w)-1$. The latter two arguments
	together with $a,b\in I_G(v,w)$ imply that $d(v,w) = d(v,b)+d(b,w) =
	d(v,a)+d(a,w)-2 = d(v,w)-2$; a contradiction. Therefore, $|I|=1$ must hold.
	Since $v,w$ where chosen arbitrarily, $|I_G(\mu,v,w)|=1$ for all $v,w\in V$.
	Consequently, $\mu$ is a \mc vertex of $G$ and $G$ is a $k$-median graph.
\end{proof}

By  Theorem \ref{thm:mulder-median}, median graphs are interval-monotone and  
always satisfy (C0) w.r.t.\ all of its vertices. 
As argued in Section \ref{sec:basic-k-median}, $k$-median graphs are not necessarily interval-monotone. 
However, they always satisfy (C0) w.r.t.\ to its \mc vertices.
\begin{lemma}\label{lem:C0}
	$G$ satisfies (C0) w.r.t.\ $\mu$ for every \mc vertex $\mu\in V(G)$.
\end{lemma}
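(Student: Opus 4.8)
The plan is to exploit directly that $\mu$ being a \mc vertex guarantees, for every pair $v,w\in V$, the existence of a unique median $z\coloneqq\med_G(\mu,v,w)$, which by definition lies in the triple intersection $I_G(\mu,v)\cap I_G(\mu,w)\cap I_G(v,w)$. So I would fix a \mc vertex $\mu$ and arbitrary $v,w\in V$ satisfying the hypothesis of (C0), namely $I_G(\mu,v)\cap I_G(v,w)=\{v\}$; the goal is then to show $v\in I_G(\mu,w)$, which is exactly the conclusion of (C0) w.r.t.\ $\mu$ for this pair.

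First I would observe that $z$ lies, in particular, in $I_G(\mu,v)\cap I_G(v,w)$, since these are two of the three intervals whose intersection contains $z$. By the hypothesis $I_G(\mu,v)\cap I_G(v,w)=\{v\}$, this immediately forces $z=v$. Then, since $z\in I_G(\mu,w)$ by definition of the median, I conclude $v=z\in I_G(\mu,w)$. As $v,w$ were arbitrary, $G$ satisfies (C0) w.r.t.\ $\mu$, and since $\mu$ was an arbitrary \mc vertex, the claim follows for every \mc vertex.

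I do not anticipate any serious obstacle here: the argument is essentially a one-line consequence of the definition of a \mc vertex, once one thinks to evaluate the median $\med_G(\mu,v,w)$ and test its membership in the two intervals singled out by (C0). The only point requiring a moment's care is checking that the hypothesis of (C0) is phrased in terms of the pairwise intersection $I_G(\mu,v)\cap I_G(v,w)$ (rather than, say, the full triple intersection), so that the median $z$ — which automatically belongs to this pairwise intersection — is pinned down to equal $v$. Using the equivalent reformulation noted after the definition of (C0), one could alternatively state the conclusion as $d_G(\mu,v)+d_G(v,w)=d_G(\mu,w)$, but the interval form keeps the argument most transparent and avoids any distance bookkeeping.
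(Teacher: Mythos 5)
Your proof is correct and is essentially identical to the paper's own argument: both fix $v,w$ with $I_G(\mu,v)\cap I_G(v,w)=\{v\}$, note that the median $\med_G(\mu,v,w)$ (which exists and is unique since $\mu$ is a \mc vertex) lies in this pairwise intersection and hence equals $v$, and conclude $v\in I_G(\mu,w)$ from the median's membership in $I_G(\mu,w)$. No gap; nothing further needed.
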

\begin{proof}
	Let $\mu$ be a \mc vertex of $G=(V,E)$.
	Suppose there are vertices 
	$v,w\in V$ such that $I(\mu,v)\cap I(v,w) = \{v\}$.
	 Since $\mu$ is a \mc vertex in $G$, it must hold that  $|I(\mu,v)\cap I(v,w)\cap I(\mu,w)|=1$
	and thus, $\med_G(\mu,v,w)=v$. Consequently, $v\in I(\mu,w)$ and $G$ satisfies (C0) w.r.t.\ $\mu$. 
\end{proof}
Theorem \ref{thm:mulder-median} and Lemma \ref{lem:C0}
beg the question to what extent Theorem \ref{thm:mulder-median}
can be generalized to cover the properties of $k$-median graphs. 
The next results provides an answer. 
	
\begin{theorem}\label{thm:char-C0}
A graph $G$ is a $k$-median graph if and only if 
$G$ is connected, bipartite and satisfies (C0) and (C2) w.r.t.\ $\mu_1,\dots,\mu_k\in V(G)$. 
In this case, the vertices $\mu_1,\dots,\mu_k\in V$ are \mc vertices of $G$.
\end{theorem}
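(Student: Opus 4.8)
The plan is to prove both implications by reducing everything to the single statement that a vertex $\mu$ is \mc precisely when $|I_G(\mu,v,w)|=1$ for all $v,w\in V$. The forward direction is essentially bookkeeping: if $G$ is a $k$-median graph with \mc vertices $\mu_1,\dots,\mu_k$, then $G$ is connected (a graph with a \mc vertex is connected, Obs.\ \ref{obs:summ}) and bipartite (Prop.\ \ref{prop:bipthm}); each $\mu_i$ satisfies (C0) by Lemma \ref{lem:C0}; and each $\mu_i$ satisfies (C2) vacuously, since $|I_G(\mu_i,v,w)|\le 1$ for all $v,w$ means $G[I_G(\mu_i,v,w)]$ never contains more than one vertex. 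So I will dispatch this direction in a few lines.

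For the converse I fix one $\mu\in\{\mu_1,\dots,\mu_k\}$ and show $|I_G(\mu,v,w)|=1$ for all $v,w$; doing this for each $\mu_i$ yields $k$ \mc vertices and hence a $k$-median graph. The engine of the argument is the observation that every vertex of $I\coloneqq I_G(\mu,v,w)$ is equidistant from $\mu$: adding the defining equalities $d(\mu,a)+d(a,v)=d(\mu,v)$ and $d(\mu,a)+d(a,w)=d(\mu,w)$ and substituting $d(a,v)+d(a,w)=d(v,w)$ gives $d(\mu,a)=\tfrac12\big(d(\mu,v)+d(\mu,w)-d(v,w)\big)$ for every $a\in I$. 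Since $G$ is bipartite, Lemma \ref{lem:01-diff-adge} shows that the parity of the distance to $\mu$ is a proper $2$-colouring, so two vertices at equal distance from $\mu$ are never adjacent; hence $G[I]$ is edgeless. Now (C2) w.r.t.\ $\mu$ forces $G[I]$ to contain an edge as soon as $|I|>1$, so $|I|\le 1$ must hold. (This simultaneously delivers connectivity of $G[I]$ for free, so one could instead route the argument through the (C1)-characterization of Thm.\ \ref{thm:char3}.)

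It remains to prove $I\neq\emptyset$, and this is the step where (C0) does the real work and the main obstacle lies. I will choose $z\in I_G(v,w)$ to be a vertex closest to $\mu$, i.e.\ minimizing $d(\mu,\cdot)$ over the (non-empty, finite) set $I_G(v,w)$, and show $z\in I_G(\mu,v)\cap I_G(\mu,w)$. The crux is the sub-claim $I_G(\mu,z)\cap I_G(z,v)=\{z\}$: if some $x\neq z$ lay in this intersection, then $x\in I_G(z,v)$ together with $z\in I_G(v,w)$ would force $x\in I_G(v,w)$ (a short triangle-inequality computation: $d(v,x)+d(x,z)+d(z,w)=d(v,w)$ collapses to $d(v,x)+d(x,w)=d(v,w)$), while $x\in I_G(\mu,z)$ with $x\neq z$ gives $d(\mu,x)<d(\mu,z)$, contradicting the minimality of $d(\mu,z)$ over $I_G(v,w)$. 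Granting the sub-claim, (C0) w.r.t.\ $\mu$ applied with the pair $(z,v)$ yields $z\in I_G(\mu,v)$; the symmetric argument with $w$ gives $z\in I_G(\mu,w)$. Hence $z\in I$, so $I\neq\emptyset$, and combined with $|I|\le1$ we obtain $|I|=1$, i.e.\ $\mu$ is \mc.

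I expect the non-emptiness step to be the only genuinely delicate part: the extremal choice of $z$ (closest to $\mu$ \emph{within} $I_G(v,w)$, rather than farthest within $I_G(\mu,v)\cap I_G(\mu,w)$) is exactly what makes the (C0) hypothesis applicable, and the induced $C_6$ discussed earlier — which fails (C0) at precisely this configuration and has $I_G(\mu,v,w)=\emptyset$ — confirms that this is where the hypothesis must bite. The equidistance/bipartiteness reduction to $|I|\le 1$ and the forward direction are routine by comparison.
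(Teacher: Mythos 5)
Your proof is correct, and its overall skeleton matches the paper's ((C0) for non-emptiness, (C2) plus bipartiteness for uniqueness), but the execution of the key step differs. For uniqueness, the paper takes the edge $\{a,b\}$ supplied by (C2), uses bipartiteness to get $d_G(\mu,b)=d_G(\mu,a)+1$, and reuses the distance computation from Thm.~\ref{thm:char3} to reach $d_G(v,w)=d_G(v,w)-2$; your observation that every vertex of $I_G(\mu,v,w)$ is equidistant from $\mu$, so that $G[I_G(\mu,v,w)]$ is edgeless in a bipartite graph, is a slightly cleaner packaging of the same idea. The genuine divergence is non-emptiness: the paper obtains its gate vertex by citing Lemma~\ref{lem:mulder-interval} (Mulder's Prop.~1.1.3), which directly yields $z\in I_G(v,\mu)\cap I_G(v,w)$ with $I_G(z,\mu)\cap I_G(z,w)=\{z\}$, so one application of (C0) places $z$ in $I_G(\mu,w)$ and hence in $I_G(\mu,v,w)$. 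You instead choose $z\in I_G(v,w)$ minimizing $d_G(\mu,\cdot)$, prove the hypothesis of (C0) from scratch via your extremal/triangle-inequality sub-claim, and apply (C0) twice (once for $v$, once for $w$). Your route is self-contained---it avoids importing Mulder's lemma---at the cost of re-deriving by hand an instance of essentially that lemma; the paper's route is shorter on the page but leans on the black box. Both are valid, and your extremal choice is precisely the kind of argument that underlies Mulder's lemma in the first place.
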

\begin{proof}
	For the \emph{only-if} direction, assume first that $G=(V,E)$ is a
	$k$-median graph. By Prop.\ \ref{prop:bipthm}, $G$ is bipartite. Let $\mu\in
	V$ be one of its (at least $k$) \mc vertices. By Lemma \ref{lem:C0}, $G$
	satisfies (C0) w.r.t.\ $\mu$. Furthermore, since $\mu$ is a \mc vertex in
	$G$, we have $G[I(\mu,v,w)]\simeq K_1$ for all $u,v\in V$. Therefore, $G$
	trivially satisfies (C2) w.r.t. $\mu$.

	For the \emph{if} direction, assume now that $G$ is connected, bipartite and
	satisfies (C0) and (C2) w.r.t.\ $\mu_1,\dots,\mu_k$. Let $\mu\in
	\{\mu_1,\dots,\mu_k\}$. We show first that $I(\mu,v,w)\neq \emptyset$ for
	all $v,w\in V$. Since $G$ is connected, we can apply Lemma
	\ref{lem:mulder-interval} to conclude that there exists a vertex $z\in
	I(\mu,v)\cap I(v,w)$ such that $I(\mu,z)\cap I(z,w) = \{z\}$ for all $v,w\in
	V$. Since $G$ satisfies (C0) w.r.t.\ $\mu$, we have $z\in I(\mu,w)$ and so,
	$I(\mu,v,w)\neq \emptyset$ for all $v,w\in V$. Assume now, for
	contradiction, that $\mu$ is not a \mc vertex in $G$. Hence, there are
	vertices $v,w$ such that $\med_G(\mu,v,w)$ is not well-defined. By the
	latter arguments, $|I(\mu,v,w)|>1$. Since $G$ satisfies (C2) w.r.t.\ $\mu$,
	there is an edge $\{a,b\}\in E(G)$ with $a,b\in I(\mu,v,w)$. Bipartiteness
	of $G$ together with \ref{lem:01-diff-adge} implies that $d_G(\mu,a)$ and
	$d_G(\mu,b)$ differ by exactly one, say $d_G(\mu,b) = d_G(\mu,a)+1$. Now we
	can apply exactly the same arguments as in the proof of Theorem
	\ref{thm:char3} to conclude that $d_G(v,w) = d_G(v,w)-2$ and obtain the
	desired contradiction. Hence, $|I\mu,v,w)|=1$ for all $v,w\in V$.
	Consequently, $\mu$ is a \mc vertex of $G$ and $G$ is a $k$-median graph.
\end{proof}

The latter results have direct implications for median graphs.

\begin{theorem}\label{thm:med-novel}
	For every graph  $G$, the following statements are equivalent. 
	\begin{enumerate}
		\item $G$ is a median graph.
		\item $G$ satisfies (C1) w.r.t.\ all of its vertices. \label{eq:C1}
		\item $G$ is connected, bipartite and satisfies (C0)  and (C2) w.r.t.\ all of its vertices.
	\end{enumerate}
\end{theorem}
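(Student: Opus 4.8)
The plan is to deduce this theorem directly from the two characterizations of $k$-median graphs already in hand, Theorem \ref{thm:char3} and Theorem \ref{thm:char-C0}, together with the reduction ``median $=$ $|V|$-median'' from Observation \ref{obs:summ}. The guiding observation is that all three conditions in the statement are \emph{per-vertex} conditions quantified over \emph{all} vertices of $G$, and the earlier theorems tie exactly these per-vertex properties to the \mc property.

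First I would record the anchor equivalence: by Observation \ref{obs:summ}, a graph $G=(V,E)$ is a median graph if and only if it is a $|V|$-median graph, i.e.\ if and only if \emph{every} vertex of $G$ is a \mc vertex. This reduces the whole theorem to relating statements (2) and (3) to the single assertion ``all vertices of $G$ are \mc''.

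Next I would establish (1) $\Leftrightarrow$ (2). Although Theorem \ref{thm:char3} is phrased existentially, its proof in fact yields the per-vertex equivalence that a fixed vertex $\mu$ is \mc if and only if $G$ satisfies (C1) w.r.t.\ $\mu$. Quantifying this over all of $V$, condition (2) holds precisely when every vertex is \mc, which by the anchor equivalence is exactly statement (1). Equivalently, applying Theorem \ref{thm:char3} with $k=|V|$ forces the witnessing vertices $\mu_1,\dots,\mu_{|V|}$ to exhaust $V$, giving the claim.

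Finally I would treat (1) $\Leftrightarrow$ (3) in the same way, now invoking Theorem \ref{thm:char-C0}. The only additional bookkeeping concerns the two global hypotheses ``connected'' and ``bipartite'': in the direction (1) $\Rightarrow$ (3) they hold automatically since median graphs are connected and bipartite, while in the direction (3) $\Rightarrow$ (1) they are assumed outright, so that applying Theorem \ref{thm:char-C0} with $k=|V|$ again forces the witnessing vertices to be all of $V$ and the remaining (C0)/(C2) part is again per-vertex. I do not anticipate a genuine obstacle here; the only point requiring care is the bookkeeping that ``w.r.t.\ $k=|V|$ vertices'' coincides with ``w.r.t.\ all vertices'', and that the per-vertex directions of Theorems \ref{thm:char3} and \ref{thm:char-C0} may be applied one vertex at a time rather than solely in their existential form.
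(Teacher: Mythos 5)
Your proposal is correct and follows essentially the same route as the paper: the paper's own proof likewise derives (1) $\Leftrightarrow$ (2) from Theorem \ref{thm:char3} and (1) $\Leftrightarrow$ (3) from Theorem \ref{thm:char-C0}, combined with the fact that a graph is median precisely when all of its vertices are \mc vertices. Your extra bookkeeping (the per-vertex reading of those theorems via their ``In this case, the $\mu_i$ are \mc vertices'' clauses, and the connectivity/bipartiteness of median graphs) just makes explicit what the paper leaves implicit.
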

\begin{proof}
	The equivalence between (1) and (2) follows from Theorem \ref{thm:char3}
	and the fact that in median graphs all vertices are \mc vertices.
	Similarly, Theorem \ref{thm:char-C0} provides the equivalence between (1) and (3).
\end{proof}

We discussed the latter results, in particular Thm.\
\ref{thm:med-novel}\eqref{eq:C1}, with our friend and colleague Wilfried Imrich.
As he pointed out, the induced subgraphs $G[I_G(u,v,w)]$ in Condition (C1) can
be replaced by ``intersections of graphs'' to obtain an alternative
characterization of median graphs. To be more precise, let $G(u,v)$ be the
subgraph of $G$ with $V(G(u,v)) = I_G(u,v)$ and where $E(G(u,v))$ consists
precisely of all edges that lie on the shortest $(u,v)$-paths. Moreover, put
$G(u,v,w)\coloneqq G(u,v)\cap G(u,w)\cap G(v,w)$ for any $u,v,w\in V(G)$. On
easily verifies that $V(G(u,v,w))=I_G(u,v,w)$ and $G(u,v,w)\subseteq
G[I_G(u,v,w)]$. The difference between $G[I_G(u,v,w)]$ and $G(u,v,w)$ is
illustrated in Fig \ref{fig:exmpl-v-convex}. In this example, for the graph
$G''$, we have $I \coloneqq I_{G''}(a,b,c)=\{x,y\}$. The induced subgraph
$G''[I]\simeq K_2$ is connected and non-empty as it consists of the edge
$\{x,y\}$. In contrast, $G''(a,b,c)$ is disconnected and consists of the
vertices $x$ and $y$ only. Based on this idea, we obtain 
\begin{theorem}\label{thm:imrich}
 	$G$ is a median graph if and only if $G(u,v,w)$ is not empty and connected
 	for all $u,v,w\in V(G)$. 
\end{theorem}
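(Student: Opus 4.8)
The plan is to reduce the statement to Theorem~\ref{thm:med-novel}, whose equivalence of (1) and (2) already characterizes median graphs as exactly those satisfying (C1) w.r.t.\ all of their vertices. The only bridge I would need is the relationship between the two subgraph constructions on $I_G(u,v,w)$ recorded just before the statement, namely $V(G(u,v,w)) = I_G(u,v,w)$ and $G(u,v,w) \subseteq G[I_G(u,v,w)]$. In particular, $G(u,v,w)$ and $G[I_G(u,v,w)]$ always share the vertex set $I_G(u,v,w)$, and $E(G(u,v,w)) \subseteq E(G[I_G(u,v,w)])$. I would then treat the two directions separately.

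For the \emph{only-if} direction I would argue that if $G$ is a median graph, then $\med_G(u,v,w)$ is well-defined for every triple, so $|I_G(u,v,w)| = 1$; since $V(G(u,v,w)) = I_G(u,v,w)$, the graph $G(u,v,w)$ is then a single vertex, hence non-empty and trivially connected for all $u,v,w \in V(G)$. This is immediate.

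For the \emph{if} direction, assuming $G(u,v,w)$ is non-empty and connected for all triples, I would fix such a triple and first note that $V(G(u,v,w)) = I_G(u,v,w) \neq \emptyset$, so $G[I_G(u,v,w)]$ is non-empty as well. Then, since the two graphs share the same vertex set while $E(G(u,v,w)) \subseteq E(G[I_G(u,v,w)])$, connectivity of the sparser graph $G(u,v,w)$ forces connectivity of $G[I_G(u,v,w)]$, because adding edges on a fixed vertex set cannot disconnect a connected graph. Hence $G$ satisfies (C1) w.r.t.\ all of its vertices, and Theorem~\ref{thm:med-novel} yields that $G$ is a median graph.

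The substantive work here all sits in Theorem~\ref{thm:med-novel}, so I do not expect a genuine obstacle. The one point I would be careful about is that, per triple, the Imrich condition is strictly stronger than (C1): in $G_3$ of Fig.\ \ref{fig:exmpl-v-convex} one has $G_3[I_{G_3}(a,b,c)] \simeq K_2$ connected while $G_3(a,b,c)$ is disconnected. This asymmetry is harmless, however, since the backward direction only ever uses that per-triple Imrich implies per-triple (C1), which is exactly the easy direction of the comparison; the reverse per-triple implication is never invoked.
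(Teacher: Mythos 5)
Your proposal is correct and follows essentially the same route as the paper: both directions reduce to Theorem~\ref{thm:med-novel} via the observation that $V(G(u,v,w)) = I_G(u,v,w)$ and $G(u,v,w)\subseteq G[I_G(u,v,w)]$, so connectivity of $G(u,v,w)$ transfers to $G[I_G(u,v,w)]$ and gives (C1) w.r.t.\ all vertices. Your closing remark correctly identifies that only the easy per-triple implication (Imrich condition implies (C1)) is needed, which is exactly how the paper's proof proceeds as well.
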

\begin{proof}
	If 	$G=(V,E)$ is a median graph, then $V(G(u,v,w))=I_G(u,v,w)$ implies that 
	$|V(G(u,v,w))|=1$ and thus, 	$G(u,v,w)\simeq K_1$ for all $u,v,w\in V$. 
	Thus,  $G(u,v,w)$ is not empty and connected.
	
	Assume now that $G(u,v,w)$ is not empty and connected for all $u,v,w\in V(G)$.
	Since $G[I_G(u,v,w)]$ and $G(u,v,w)$ have the same 
	vertex sets and since $G(u,v,w)\subseteq G[I_G(u,v,w)]$
	it follows that $G$ satisfies (C1) w.r.t.\ all of its 
	vertices. Theorem \ref{thm:med-novel} implies that
	$G$ is a median graph. 
\end{proof}

\subsection{Modular Graphs}

For the sake of completeness, we provide here known results established by
Bandelt et al.\ for the special case of modular graphs and some of their
consequences.

\begin{theorem}[{\cite[Prop.\ 5.5]{BVV:93}}]\label{thm:charModMed}
A modular graph  $G$ is a $k$-median graph with \mc vertices $\mu_1,\dots,\mu_k$ if and only if 
the following statement is satisfied: 
If $u, v$ are vertices that have degree three in an induced subgraph $K_{2,3}\subseteq G$, then 
		$u \in I_G(\mu,v)$ or $v \in I_G(\mu,u)$ for all $\mu\in \{\mu_1,\dots,\mu_k\}$.
\end{theorem}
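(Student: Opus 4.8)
The plan is to prove both directions for a fixed candidate vertex $\mu$, using throughout that a modular graph is bipartite and meshed (so the quadrangle property is available and intervals are nonempty). The key quantitative observation, which I will use in both directions, is this: if $u,v$ are the degree-three vertices of an induced $K_{2,3}\subseteq G$ then $d_G(u,v)=2$, the two vertices lie in a common colour class, so $d_G(\mu,u)$ and $d_G(\mu,v)$ have equal parity, and the triangle inequality forces $|d_G(\mu,u)-d_G(\mu,v)|\in\{0,2\}$. The condition ``$u\in I_G(\mu,v)$ or $v\in I_G(\mu,u)$'' holds \emph{precisely} when this value is $2$ (then the closer of $u,v$ lies on a shortest path from $\mu$ to the other), and \emph{fails} precisely when $d_G(\mu,u)=d_G(\mu,v)$.

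For the \emph{only-if} direction I fix a \mc vertex $\mu$ and an induced $K_{2,3}$ with degree-three vertices $u,v$ and degree-two vertices $a,b,c$, and rule out the equidistant case. If $d_G(\mu,u)=d_G(\mu,v)=:\rho$, then by Lemma~\ref{lem:01-diff-adge} each leaf lies at distance $\rho-1$ or $\rho+1$ from $\mu$, so by pigeonhole two leaves, say $a,b$, sit at the same level. If both are at level $\rho-1$, then each of $a,b$ is adjacent to both $u$ and $v$ and lies on shortest $\mu$--$u$ and $\mu$--$v$ paths, so $a,b\in I_G(\mu,u)\cap I_G(\mu,v)\cap I_G(u,v)=I_G(\mu,u,v)$; if both are at level $\rho+1$, the symmetric computation gives $u,v\in I_G(\mu,a)\cap I_G(\mu,b)\cap I_G(a,b)=I_G(\mu,a,b)$. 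Either way a median set has at least two elements, contradicting that $\mu$ is \mc. Hence the equidistant case is impossible and the condition holds.

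For the \emph{if} direction I argue by contraposition: assuming $\mu$ is not \mc I will exhibit an induced $K_{2,3}$ whose degree-three vertices are equidistant from $\mu$, which by the opening remark violates the stated condition. Since $G$ is modular, non-consistency yields $v,w$ with $M:=I_G(\mu,v,w)$ of size at least two. Solving the three median equations shows that every $m\in M$ satisfies $d_G(\mu,m)=r$, $d_G(v,m)=s$, $d_G(w,m)=t$ for one fixed triple $(r,s,t)$; in particular all medians are equidistant from $\mu$, so $M$ is an independent set and any two distinct medians are at even distance at least $2$. Because two vertices at distance $2$ in a bipartite graph, together with three common neighbours (which are pairwise non-adjacent, lying in the opposite colour class), automatically induce a $K_{2,3}$, it suffices to produce two medians $m_1,m_2$ with $d_G(m_1,m_2)=2$ having at least three common neighbours; their degree-three roles then hold and they are equidistant from $\mu$.

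The plan for this last reduction is first to choose the witness $(v,w)$ so that $d_G(\mu,v)+d_G(\mu,w)$ is minimal, and then to invoke meshedness: applying the quadrangle property to the distance-static quartets formed by $m_1,m_2$ relative to the reference vertices $\mu$, $v$, and $w$ produces common neighbours of $m_1,m_2$ at the various levels $(r\pm1,s\pm1)$, and a counting argument—using minimality to forbid the collapses that would otherwise hand back a strictly smaller witness—should force at least three of these common neighbours to be pairwise distinct. I expect this to be the main obstacle: both the reduction to distance-two medians and the bookkeeping that guarantees \emph{three} genuinely distinct common neighbours (rather than one or two) require carefully tracking the quadrangle-property applications and eliminating degenerate configurations by minimality. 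By contrast, the forward direction and the observation that $M$ is an independent set are routine.
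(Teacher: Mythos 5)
A preliminary remark: the paper does not prove this statement at all — it is quoted, for completeness, as Prop.~5.5 of \cite{BVV:93}, so there is no in-paper proof to compare against and your attempt must stand on its own. Your opening reformulation is correct: since modular graphs are bipartite and the degree-three vertices $u,v$ of an induced $K_{2,3}$ satisfy $d_G(u,v)=2$, the condition ``$u\in I_G(\mu,v)$ or $v\in I_G(\mu,u)$'' fails precisely when $d_G(\mu,u)=d_G(\mu,v)$. Your \emph{only-if} direction is complete and sound: pigeonholing the three degree-two vertices onto the levels $\rho\pm 1$ yields either two of them in $I_G(\mu,u,v)$ or $u,v\in I_G(\mu,a,b)$, contradicting that $\mu$ is a \mc vertex.

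The \emph{if} direction, however, contains a genuine gap, which you yourself flag (``should force'', ``I expect this to be the main obstacle''). From $|I_G(\mu,v,w)|\geq 2$ you correctly deduce that all medians of $(\mu,v,w)$ lie at one common distance $r$ from $\mu$ and are pairwise non-adjacent, but everything after that is a plan rather than a proof, and two essential steps are missing. First, you never establish that two medians $m_1,m_2$ with $d_G(m_1,m_2)=2$ exist; without this the quadrangle property cannot even be invoked on them, since distance-$\ell$-static quartets require distance exactly $2$. Second, you never carry out the count producing three pairwise distinct common neighbours. Note that once one proves the key reduction — that a violating pair $(v,w)$, chosen to minimize a suitable weight such as $d_G(\mu,v)+d_G(\mu,w)+d_G(v,w)$, can be taken with $d_G(v,w)=2$ and with the medians adjacent to both $v$ and $w$ — the rest is short: $v$ and $w$ are then two common neighbours of $m_1,m_2$ at level $r+1$, one application of the quadrangle property with reference vertex $\mu$ (available since modular graphs are meshed, Lemma \ref{lem:charModular}) gives a third common neighbour at level $r-1$, the three are distinct because they lie at different levels, and bipartiteness (Lemma \ref{lem:01-diff-adge}) makes the resulting $K_{2,3}$ induced with degree-three vertices $m_1,m_2$ equidistant from $\mu$. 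That reduction to a violating pair at distance $2$ is the real content of the backward direction — it is what the cited proposition of Bandelt et al.\ actually establishes, via repeated use of modularity and minimality — and your proposal defers exactly this step instead of doing it.
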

Since Theorem \ref{thm:charModMed} is always satisfied for $K_{2,3}$-free graphs, 
it implies

\begin{corollary}\label{cor:modK23->1med}
	Every modular $K_{2,3}$-free graph $G$ is a $k$-median graph for all $k\in
	\{1,\dots,|V(G)|\}$.
\end{corollary}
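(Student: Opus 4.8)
The plan is to invoke Theorem~\ref{thm:charModMed} directly and to observe that its hypothesis is vacuous for $K_{2,3}$-free graphs. First I would fix an arbitrary integer $k$ with $1\leq k\leq |V(G)|$ and select any $k$ distinct vertices $\mu_1,\dots,\mu_k\in V(G)$. To conclude that $G$ is a $k$-median graph with these as \mc vertices, Theorem~\ref{thm:charModMed} tells us it suffices to verify the following condition: whenever $u,v$ are the two degree-three vertices of an induced $K_{2,3}\subseteq G$, then $u\in I_G(\mu_i,v)$ or $v\in I_G(\mu_i,u)$ for each $\mu_i$, $1\leq i\leq k$.

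Since $G$ is $K_{2,3}$-free, there is no induced $K_{2,3}$ subgraph at all, so the premise ``$u,v$ have degree three in an induced $K_{2,3}\subseteq G$'' is never satisfied. Hence the required implication holds vacuously, irrespective of which vertices $\mu_1,\dots,\mu_k$ were chosen. Theorem~\ref{thm:charModMed} therefore certifies that $G$ is a $k$-median graph with \mc vertices $\mu_1,\dots,\mu_k$.

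Because the choice of $k$ was arbitrary in $\{1,\dots,|V(G)|\}$, the statement follows for every such $k$. Alternatively, one could first establish only the extremal case $k=|V(G)|$ in this way, which makes \emph{every} vertex of $G$ a \mc vertex, and then descend to all smaller $k$ via the third item of Obs.~\ref{obs:summ}. I do not anticipate any genuine obstacle here: the entire content of the corollary is carried by Theorem~\ref{thm:charModMed}, and the only thing that needs checking is that $K_{2,3}$-freeness makes its characterizing condition trivially true.
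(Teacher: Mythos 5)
Your proposal is correct and matches the paper's own reasoning: the paper derives this corollary precisely by noting that the characterizing condition in Theorem~\ref{thm:charModMed} is vacuously satisfied when $G$ has no induced $K_{2,3}$, so every vertex is a \mc vertex. Your elaboration (fixing arbitrary $\mu_1,\dots,\mu_k$, or alternatively taking $k=|V(G)|$ and descending via Obs.~\ref{obs:summ}) is just a more explicit spelling-out of the same one-line argument.
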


This and the fact that median graphs are modular and $K_{2,3}$-free implies

\begin{corollary}[{\cite[Thm.\ 3]{SM:99}}]\label{cor:med-modK23free}
	A graph $G$ is a median graph if and only if $G$ is modular and $K_{2,3}$-free. 
\end{corollary}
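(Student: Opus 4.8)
The plan is to obtain both implications as short consequences of results already assembled in this section, so that the corollary is essentially a repackaging of Corollary~\ref{cor:modK23->1med} together with Observation~\ref{obs:summ}; no new machinery is needed.

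For the \emph{only-if} direction, I would assume $G$ is a median graph and verify the two asserted properties separately. That $G$ is modular is already recorded in the preliminaries, since $|I_G(u,v,w)|=1$ for every triple forces $I_G(u,v,w)\neq\emptyset$. To see that $G$ is $K_{2,3}$-free, I would argue by contradiction: suppose $G$ contained an induced $K_{2,3}$ with parts $\{x,y\}$ and $\{u,v,w\}$. The induced hypothesis makes $u,v,w$ pairwise non-adjacent while each pair shares the common neighbours $x$ and $y$, so $d_G(u,v)=d_G(u,w)=d_G(v,w)=2$, with both $x$ and $y$ lying on a length-$2$ shortest path between every such pair. Hence $\{x,y\}\subseteq I_G(u,v,w)$, giving $|I_G(u,v,w)|\geq 2$ so that $\med_G(u,v,w)$ is not well-defined, contradicting that $G$ is a median graph.

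For the \emph{if} direction, I would assume $G$ is modular and $K_{2,3}$-free and invoke Corollary~\ref{cor:modK23->1med}, which yields that $G$ is a $k$-median graph for every $k\in\{1,\dots,|V(G)|\}$; in particular $G$ is a $|V(G)|$-median graph. The last item of Observation~\ref{obs:summ} states that being a $|V(G)|$-median graph is equivalent to being a median graph, so $G$ is a median graph, completing the equivalence.

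The only step carrying any genuine content is the $K_{2,3}$-freeness of median graphs, and even there the difficulty is minimal: the point is simply that both degree-three vertices of the $K_{2,3}$ serve as medians of the degree-two triple, and the induced-subgraph hypothesis pins the three pairwise distances to exactly $2$ so that no shorter routes through the rest of $G$ can interfere. Everything else reduces to citing Corollary~\ref{cor:modK23->1med} and Observation~\ref{obs:summ}, so I do not expect a real obstacle beyond selecting the correct earlier statements.
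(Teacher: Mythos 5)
Your proposal is correct and follows essentially the same route as the paper: the \emph{if} direction via Corollary~\ref{cor:modK23->1med} together with the last item of Observation~\ref{obs:summ}, and the \emph{only-if} direction from median graphs being modular and $K_{2,3}$-free. The only difference is that you prove the $K_{2,3}$-freeness of median graphs explicitly (correctly, via both degree-three vertices lying in $I_G(u,v,w)$), whereas the paper simply invokes it as a known fact.
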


As a simple consequence of Cor.\ \ref{cor:med-modK23free}, we obtain the
following structural result which partially answer the question raised in
Problem \ref{problem:Q3MK23}.

\begin{corollary}\label{cor:modK23med}
	Every modular \mh{(}$k$-median\mh{)} graph that is not a median graph must
	contain an induced $K_{2,3}$.
\end{corollary}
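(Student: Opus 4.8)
The plan is to invoke Corollary~\ref{cor:med-modK23free} and argue purely by contraposition. That corollary states that a graph is a median graph precisely when it is simultaneously modular and $K_{2,3}$-free. Since the hypothesis here already supplies modularity, the failure of the median property can only be attributed to the presence of an induced $K_{2,3}$.

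Concretely, I would start from a modular graph $G$ that is not a median graph. Negating the biconditional in Corollary~\ref{cor:med-modK23free} yields that $G$ is \emph{not} median if and only if $G$ fails to be modular \emph{or} $G$ contains an induced $K_{2,3}$. The first alternative is ruled out by assumption, since $G$ is modular. Hence the second alternative must hold, i.e.\ $G$ contains an induced $K_{2,3}$, which is exactly the claim. The parenthetical ``($k$-median)'' plays no role in the argument beyond fixing the intended context: any modular graph that is not median already contains an induced $K_{2,3}$, whether or not it happens to possess \mc vertices.

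There is essentially no obstacle to overcome: the statement is an immediate logical consequence of the characterization of median graphs as modular $K_{2,3}$-free graphs. The only point requiring mild care is the direction of the contraposition, namely that modularity is retained as a hypothesis while the $K_{2,3}$-freeness is the property that must be forfeited; one should not accidentally try to deduce a failure of modularity. Beyond that, the proof is a single application of Corollary~\ref{cor:med-modK23free} and needs no further machinery.
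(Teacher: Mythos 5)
Your proof is correct and follows exactly the route the paper intends: the paper states Corollary~\ref{cor:modK23med} as an immediate consequence of Corollary~\ref{cor:med-modK23free} (median $\Leftrightarrow$ modular and $K_{2,3}$-free), and your contrapositive argument is precisely that deduction. Your observation that the parenthetical ``($k$-median)'' is logically superfluous is also consistent with the paper's presentation, which marks it as an optional qualifier.
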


Not all $k$-median graphs are modular. By way of example, a $Q_3^-$ is a
$4$-median graph but not modular. To see this, consider the three non-\mc
vertices $x,y,z$ in a $Q_3^-$ (cf.\ Fig.\ \ref{fig:exmpls}(b)). On easily
verifies that $I(x,y,z)=\emptyset$ must hold. 1-median graphs that are modular
are precisely the meshed graphs. To prove this, we first note that in
\cite{CCHO:20} weakly modular graphs have been defined as meshed graphs that
satisfy in addition a so-called triangle property. This triangle property is
trivially satisfied in bipartite graphs. Hence, in bipartite graphs the terms
meshed and weakly modular are equivalent. With this in hand, we can rephrase
Lemma 2.8 in \cite{CCHO:20} (see also \cite[Prop.\ 1.7]{BVV:93}) as 

\begin{lemma}
A graph $G$ is modular if and only if $G$ is connected, bipartite and meshed. 
\label{lem:charModular}
\end{lemma}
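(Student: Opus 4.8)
The plan is to prove the two implications separately, deriving ``modular $\Rightarrow$ connected, bipartite, meshed'' from scratch and obtaining the converse from the known classification of modular graphs among weakly modular ones.

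For the forward direction, connectedness was already observed in the text. For bipartiteness I would argue by contraposition using Lemma~\ref{lem:01-diff-adge}: if $G$ is not bipartite, there are a vertex $x$ and an edge $\{u,v\}$ with $d_G(x,u)=d_G(x,v)$. Then $I_G(u,v)=\{u,v\}$, and neither $u\in I_G(x,v)$ nor $v\in I_G(x,u)$ can hold, since each would force $d_G(x,u)$ and $d_G(x,v)$ to differ by exactly $1$. Hence $I_G(x,u,v)=\emptyset$, contradicting modularity, so $G$ is bipartite. For meshedness I would take any distance-$\ell$-static quartet $(x,z,y,w)$; modularity yields a vertex $t\in I_G(x,z,w)$. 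From $t\in I_G(x,z)$ we get $d_G(x,t)+d_G(t,z)=2$, and the memberships $t\in I_G(x,w)\cap I_G(z,w)$ rule out $t=x$ and $t=z$ (either would give $2+\ell=\ell$). Thus $t$ is a common neighbour of $x$ and $z$, and $t\in I_G(x,w)$ with $d_G(x,t)=1$ forces $d_G(t,w)=\ell-1$, i.e.\ the quadrangle property holds.

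For the converse direction, the quickest route is the one already prepared in the paragraph preceding the statement: Lemma~2.8 of \cite{CCHO:20} characterises modular graphs as exactly the connected, bipartite, weakly modular graphs, and, since the triangle condition is vacuous in the triangle-free bipartite setting, ``weakly modular'' collapses to ``meshed'' here. Feeding the established equivalence of meshed and weakly modular (for bipartite graphs) into that characterisation gives ``connected, bipartite, meshed $\Rightarrow$ modular'' immediately. A self-contained argument would instead fix $u,v,w$ and take $m\in I_G(v,w)$ minimising $d_G(u,m)$, aiming to show $m\in I_G(u,v)\cap I_G(u,w)$ so that $m$ is a median; minimality together with bipartiteness forces every neighbour of $m$ that is one step closer to $u$ to be strictly farther from both $v$ and $w$, and one then uses the quadrangle property to propagate this local condition along a geodesic from $m$ to $u$. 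Alternatively one can start from the vertex $z\in I_G(u,v)\cap I_G(u,w)$ with $I_G(z,v)\cap I_G(z,w)=\{z\}$ supplied by Lemma~\ref{lem:mulder-interval} and show $z\in I_G(v,w)$ by induction on $d_G(z,v)+d_G(z,w)$.

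The main obstacle is exactly this converse: turning the purely local ``gradient'' information at the $u$-closest point of $I_G(v,w)$ into the global statement $m\in I_G(u,v)\cap I_G(u,w)$. The quadrangle property only fills in a single $C_4$ at a time (passing from the far corner to the near corner), so the descent requires a careful induction with a case distinction on how the distances to $v$ and $w$ change along the geodesic; this is precisely the content packaged in \cite{CCHO:20,BVV:93}, which is why I would ultimately prefer to cite it rather than reprove it.
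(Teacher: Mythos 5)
Your proposal is correct and takes essentially the same route as the paper: the paper likewise obtains this lemma by rephrasing Lemma~2.8 of \cite{CCHO:20}, using the observation that the triangle property is vacuous in (triangle-free) bipartite graphs, so that meshed and weakly modular coincide in that setting. Your extra from-scratch verification of the forward direction (modular $\Rightarrow$ bipartite and meshed) is correct but is simply subsumed by that citation in the paper's treatment, and your fallback plan of citing \cite{CCHO:20} for the hard converse is exactly what the paper does.
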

\begin{corollary}
 Let $G$ be a $k$-median graph. Then, $G$ is modular if and only if $G$ is meshed. 
\end{corollary}
\begin{proof}
If $G$ is modular, then Lemma \ref{lem:charModular} implies that $G$ is meshed.
Conversely, suppose that $G$ is meshed. Since $G$ is a $k$-median graph, it must
be connected. Moreover, Prop.\ \ref{prop:bipthm} implies that $G$ is bipartite.
By Lemma \ref{lem:charModular}, $G$ is modular. 
\end{proof}

We provide now a partial answer to the question raised in Problem \ref{problem:convex}.

\begin{proposition}\label{prop:convex-subgraph-modular}
	Every convex subgraph of a modular $k$-median graph is a $1$-median graph. 
\end{proposition}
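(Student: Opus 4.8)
The plan is to produce, inside the convex subgraph $H$, a single \mc vertex, obtained as the nearest-point projection (gate) of an arbitrary \mc vertex $\mu$ of the ambient graph $G$. First I would record the two consequences of convexity that make everything run. Since $H$ is convex it is isometric, so $d_H(x,y)=d_G(x,y)$ for all $x,y\in V(H)$; and since every shortest $(x,y)$-path of $G$ between vertices of $H$ lies in $H$, we get $I_G(x,y)\subseteq V(H)$ and in fact $I_H(x,y)=I_G(x,y)$ for all $x,y\in V(H)$, whence $I_H(x,y,z)=I_G(x,y,z)$ for all $x,y,z\in V(H)$. I would then fix a \mc vertex $\mu$ of $G$ (one exists because $G$ is a $k$-median graph with $k\geq 1$) and pick $w\in V(H)$ with $d_G(\mu,w)=\min_{u\in V(H)}d_G(\mu,u)$.

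The first key step is to show that $w$ is a gate for $\mu$, i.e.\ $w\in I_G(\mu,u)$ for every $u\in V(H)$. Because $\mu$ is \mc, the vertex $m\coloneqq\med_G(\mu,w,u)$ is well-defined; as $m\in I_G(w,u)$ and $H$ is convex, $m\in V(H)$. From $m\in I_G(\mu,w)$ we get $d_G(\mu,m)\leq d_G(\mu,w)$, while minimality of $w$ gives $d_G(\mu,m)\geq d_G(\mu,w)$; hence $d_G(\mu,m)=d_G(\mu,w)$, and combining this with $m\in I_G(\mu,w)$ forces $d_G(m,w)=0$, so $m=w$. Thus $w=\med_G(\mu,w,u)\in I_G(\mu,u)$, giving $d_G(\mu,u)=d_G(\mu,w)+d_G(w,u)$, and, applied to any $m\in V(H)$, $w\in I_G(\mu,m)$.

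The second key step is the interval identity $I_G(\mu,x,y)=I_G(w,x,y)$ for all $x,y\in V(H)$. Both inclusions are distance bookkeeping driven by the gate relations: any $m$ in either triple-interval satisfies $m\in I_G(x,y)\subseteq V(H)$, so $w\in I_G(\mu,m)$, i.e.\ $d_G(\mu,m)=d_G(\mu,w)+d_G(w,m)$; feeding this into $d_G(\mu,x)=d_G(\mu,w)+d_G(w,x)$ converts ``$m$ lies between $\mu$ and $x$'' into ``$m$ lies between $w$ and $x$'' and back, and likewise for $y$. Since $\mu$ is \mc we have $|I_G(\mu,x,y)|=1$, hence $|I_G(w,x,y)|=1$, and therefore $|I_H(w,x,y)|=|I_G(w,x,y)|=1$ for all $x,y\in V(H)$. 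So $w$ is a \mc vertex of $H$ and $H$ is a $1$-median graph.

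I expect the gate step to be the crux: one must check that the projected median $m$ actually lands in $V(H)$ — this is exactly where convexity of $H$ is used — and that minimality of $d_G(\mu,w)$ collapses $m$ to $w$. After that, the interval identity and the final count are mechanical. It is worth noting that the argument leans on $G$ only through the existence of the \mc vertex $\mu$ and through $H$ inheriting well-behaved intervals from convexity; modularity of $G$ enters merely to place the statement in the setting where these intervals are guaranteed to behave as in the median case.
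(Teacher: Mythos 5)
Your proof is correct, but it takes a genuinely different---and in fact stronger---route than the paper's. The paper splits into two cases: if $\mu\in V(H)$ it invokes $\mu$-convexity and Prop.\ \ref{prop:rho-convex-root}; if $\mu\notin V(H)$ it takes the closest vertex $v\in V(H)$, shows $I_G(\mu,v)\cap I_G(v,x)=\{v\}$, then uses condition (C0) (via Thm.\ \ref{thm:char-C0}) and Lemma \ref{lem:mulder-interval} to obtain only the \emph{one-sided} inclusion $I_G(v,x,y)\subseteq I_G(\mu,x,y)=\{z\}$; since an inclusion into a singleton leaves open the possibility $I_G(v,x,y)=\emptyset$, the paper invokes modularity of $G$ precisely to rule this out. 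You instead prove that the closest vertex $w$ is a \emph{gate} for $\mu$ (the median $\med_G(\mu,w,u)$ exists because $\mu$ is \mc, lands in $V(H)$ by convexity, and collapses to $w$ by minimality), and from the gate relations you derive the full two-sided identity $I_G(\mu,x,y)=I_G(w,x,y)$; non-emptiness then comes for free from $|I_G(\mu,x,y)|=1$, so modularity is never used. Both inclusions check out (the reverse one, which the paper's argument lacks, is exactly what the gate property supplies), the degenerate cases ($x=y$, $w=\mu$, etc.) are trivial, and convexity gives $I_H(w,x,y)=I_G(w,x,y)$, so $w$ is \mc in $H$. The notable consequence is that your argument establishes the statement for \emph{every} $k$-median graph, not just modular ones, and would thus answer Problem \ref{problem:convex} in the affirmative; given that the paper leaves this as an open problem, this step deserves especially careful scrutiny, but I see no gap in it.
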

\begin{proof}
	Let $G=(V,E)$ be a modular $k$-median graph, $\mu$ be a \mc vertex of $G$
	and $H$ be a convex subgraph of $G$. Assume first that $\mu\in V(H)$. Since
	$H$ is convex, $H$ is $\mu$-convex. By Prop.\ \ref{prop:rho-convex-root},
	$\mu$ is a \mc vertex in $H$. Thus, $H$ is a $1$-median graph. 
	
	Suppose now that $\mu\notin V(H)$ for any \mc vertex $\mu$ of $G$. Let $v\in
	V(H)$ be a vertex that is closest to $\mu$ among the vertices in $H$ and let
	$x,y\in V(H)$ be chosen arbitrarily. Since $G$ is modular, we have 
	$I_G(v,x,y)\neq \emptyset$. Let $P$ be a shortest $(v,x)$-path in
	$H$ (and thus, in $G$). Let $w\neq v$ be a vertex in $P$. If $d_G(\mu,w) +
	d_G(w,v) = d_G(\mu,v)$, then $w$ would be closer to $\mu$ than $v$. Hence,
	$d_G(\mu,w) + d_G(w,v) > d_G(\mu,v)$ must hold and, therefore, $w\notin
	I_G(\mu,v)$ for all $w\in V(P)\setminus \{v\}$ and every shortest
	$(v,x)$-path $P$. By similar arguments, none of the vertices on a shortest
	$(v,y)$-path can be contained in $I_G(\mu,v)$ except vertex $v$. 
	Hence,  $I_G(\mu,v)\cap I_G(v,x) = I_G(\mu,v)\cap I_G(v,y) = \{v\}$. 
	Since $G$ is a $k$-median graph,  Thm.\ \ref{thm:char-C0} implies that
	$G$ satisfies (C0) w.r.t.\ $\mu$. Taken the latter two arguments together
	shows that $v\in I_G(\mu,x)\cap I_G(\mu,y)$. 
	 Lemma \ref{lem:mulder-interval} implies that 
 	 $I_G(v,x)\subseteq I_G(\mu,x)$ and $I_G(v,y)\subseteq I_G(\mu,y)$.
 	 Hence, $I_G(v,x,y)\subseteq I_G(\mu,x,y)$.
	 Since $\mu$ is a \mc vertex, $I_G(\mu,x,y) = \{z\}$ for some $z\in V$.
	 The latter two arguments and the fact that $I_G(v,x,y)\neq \emptyset$
	 imply that $I_G(v,x,y)=\{z\}$. Since $z$ is located on a shortest
	 $(x,y)$-path in $G$ and since $H$ is convex it follows that $z\in V(H)$.
	 As the latter arguments hold
	 for all vertices $x,y\in V(H)$, that is, 
	 $\med_G(v,x,y) =  \med_H(v,x,y)$ for all $x,y\in V(H)$, it follows that 
	 $H$ is a 1-median graph with \mc vertex $v$. 
\end{proof}

Note that \mc vertices in a convex subgraph of a $k$-median graph $G$ are not
necessarily \mc vertices of $G$. By way of example, each induced $K_{2,3}$
subgraph of the graph $G_4$ in Fig.\ \ref{fig:exmpl-v-convex} is convex and, at
the same time, a 2-median graph. However, none of the \mc vertices within these
$K_{2,3}$s are \mc vertices of $G_4$.

\section{Summary and Outlook}

In this contribution, we considered $k$-median graphs as a natural
generalization of median graphs. We provided several characterizations of
$k$-median graphs based on a generalization of convexity ($v$-convexity) as well
as three simple conditions (C0),(C1) and (C2). These results, in turn, imply
several novel characterizations of median graphs in terms of the structure of
subgraphs based on three vertices and the respective shortest paths and
intervals between them. A simple tool written in python to verify if a given
graph is a $k$-median graph and to compute the largest such integer $k$ in the
affirmative case is provided at GitHub \cite{github-MH}.

	In the last decades, dozens of interesting characterizations of median
graphs have been established and we refer to \cite{SM:99} for an excellent
overview. It would be of interest to see in more detail how other
characterizations and results are linked to the structure of $k$-median graphs.
In particular, we want to understand in more detail how $K_{2,3}$-free
$k$-median graphs are linked to the structure of hypercubes and median graphs
(cf.\ Problem \ref{problem:hypercube}). Moreover, does every $k$-median graph
that is not a median graph contain an induced $K_{2,3}$ or $Q_3^-$ (Problem
\ref{problem:Q3MK23})? Are convex subgraphs of $k$-median graphs $1$ median
graphs (Problem \ref{problem:convex})?

	Furthermore, distance-$\ell$-static quartets $(x,z,y,\mu)$ in $k$-median
graphs determine $C_4$s induced by $x,z,y$ and an additional vertex $u$ that is
closer to the respective \mc vertex $\mu$. That is, repeated application of
distance-$\ell$-static quartets to reconstruct $C_4$s must terminate at a
certain point and provides valuable information about induced $C_4$s. This begs
the question to what extent a $k$-median graph can be reconstructed from its
distance-$\ell$-static quartets along large isometric cycles. Even more, which
type of $k$-median graphs can be uniquely determined by large isometric cycles
and the resulting distance-$\ell$-static quartets?

    The possibly most prominent characterization established by Martyn Mulder
\cite{Mulder:78,mulder1980interval} states that every median graph can be
obtained from a single vertex graph $K_1$ by a so-called convex expansion
procedure. While it is possibly a relative easy task to show that $k$-median
graphs are closed under the convex expansion procedure, it remains an open
question if every $k$-median graph can be obtained by convex expansions applied
to a particular set of starting graphs. Moreover, what is the connection of
$k$-median graphs to other generalizations or subclasses of median graphs, see
e.g.\
\cite{KLAVZAR2012462,BRESAR2007916,SEEMANN202338,BMW:94,BRESAR2003557,IMRICH1998677,BIK+02,BRESAR20071389}?
    
    So far, we have considered $k$-median graphs without making additional
assumptions on the integer $k$. For future research, it would be of interest to
investigate the structure of \emph{proper} $k$-median graphs for a fixed $k$.
Moreover, what are the requirements such that certain graph modification
operations (e.g.\ edge-deletion, edge-addition or contraction of edges) preserve
the property of a graph being a $k$-median graph?

\section*{Acknowledgments}
We want to thank Carmen Bruckmann, Peter F.\ Stadler, Wilfried Imrich and Sandi
Klav{\v{z}}ar for all the interesting discussions and useful comments to the
topic. The idea of Theorem \ref{thm:imrich} goes back to Wilfried Imrich. This
work was partially supported by the Data-driven Life Science (DDLS) program
funded by the Knut and Alice Wallenberg Foundation.

\bibliographystyle{spbasic}     
\bibliography{kmedian}

\end{document}